\def\Hom{\hbox{\rm Hom\,}}
\def\Ext{\hbox{\rm Ext\,}}
\def\dimHom{\hbox{\rm dimHom\,}}
\def\dimExt{\hbox{\rm dimExt\,}}
\def\mod{\hbox{\rm mod\,}}
\def\ql{\hbox{\rm ql\,}}
\def\qs{\hbox{\rm qs\,}}
\def\qt{\hbox{\rm qt\,}}
\def\udim{\hbox{\rm \underline{dim}\,}}
\def\ra{{\rightarrow}}
\title[Regular modules with preprojective Gabriel-Roiter submodules]
{Regular modules with preprojective Gabriel-Roiter submodules over
$n$-Kronecker quivers}
\author{Bo Chen}
\email {mcebbchen@googlemail.com}
\address {Universit\"at zu K\"oln\\
          Mathematisches Institut\\
                   Weyertal 86-90\\
           D-50931 K\"oln\\ Germany}
\keywords{$n$-Kronecker quivers, Gabriel-Roiter measure, direct successor}
\thanks{The author is supported by DFG-Schwerpunktprogramm 1388
`Darstellungstheorie'.}
\newtheorem{theo}{Theorem}[section]
\newtheorem{defi}[theo]{Definition}
\newtheorem{lemm}[theo]{Lemma}
\newtheorem{coro}[theo]{Corollary}
\newtheorem{prop}[theo]{Proposition}
\newtheorem*{theo1}{Theorem 1}
\newtheorem*{theo2}{Theorem 2}
\newtheorem*{theo3}{Theorem 3}
\newtheorem*{theo4}{Theorem 4}
\newenvironment{example}[1][Example]{\begin{trivlist}
\item[\hskip \labelsep {\bfseries #1}]}{\end{trivlist}}
\newenvironment{remark}[1][Remark]{\begin{trivlist}
\item[\hskip \labelsep {\bfseries #1}]}{\end{trivlist}}
\begin{document}

\begin{abstract} Let $Q$ be a wild $n$-Kronecker quiver, i.e., a quiver with two vertices,
labeled by $1$ and $2$, and $n\geq 3$ arrows from $2$ to $1$. The
indecomposable regular modules with preprojective Gabriel-Roiter
submodules, in particular, those $\tau^{-i}X$ with $\udim X=(1,c)$
for $i\geq 0$ and some $1\leq c\leq n-1$ will be studied.  It will be shown that for
each $i\geq 0$ the irreducible monomorphisms starting with $\tau^{-i}X$ give
rise to a sequence of Gabriel-Roiter inclusions, and moreover, the
Gabriel-Roiter measures of those produce a sequence of direct successors.
In particular, there are infinitely many GR-segments, i.e., a sequence of
Gabriel-Roiter measures closed under direct successors and predecessors.
The case $n=3$ will be studied in detail with the help
of Fibonacci numbers.
It will be  proved that for a regular component containing some indecomposable
module with dimension vector $(1,1)$ or $(1,2)$, the Gabriel-Roiter
measures of the indecomposable modules are uniquely determined by their dimension vectors.
\end{abstract}

\maketitle

{\footnotesize{\it Mathematics Subject Classification} (2000).
16G20,16G70}

\section{Introduction}

Let $\Lambda$ be an artin algebra and $\mod\Lambda$ the category of
finitely generated right $\Lambda$-modules. For each
$M\in\mod\Lambda$, we denote by $|M|$ the length of $M$. The symbol
$\subset$ is used to denote proper inclusion.

We first recall the original definition of Gabriel-Roiter measure
\cite{R3,R4}. Let $\mathbb{N}$=$\{1,2,\ldots\}$ be the set of
natural numbers and $\mathcal{P}(\mathbb{N})$ be the set of all
subsets of $\mathbb{N}$.  A total order on
$\mathcal{P}(\mathbb{N})$ can be defined as follows: if $I$,$J$
are two different subsets of $\mathbb{N}$, write $I<J$ if the
smallest element in $(I\backslash J)\cup (J\backslash I)$ belongs to
J. Also we write $I\ll J$ provided $I\subset J$ and for all elements
$a\in I$, $b\in J\backslash I$, we have $a<b$. We say that $J$ {\bf
starts with} $I$ if $I=J$ or $I\ll J$. Thus $I<J<I'$ with $I'$
starts with $I$ implies that $J$ starts with $I$.

For each $M\in\mod\Lambda$, let $\mu(M)$ be the maximum of the sets
$\{|M_1|,|M_2|,\ldots, |M_t|\}$, where $M_1\subset M_2\subset \ldots
\subset M_t$ is a chain of indecomposable submodules of $M$. We call
$\mu(M)$ the {\bf Gabriel-Roiter} ({\bf GR} for short) {\bf measure}  of $M$. A
subset $I$ of $\mathcal{P}(\mathbb{N})$ is called a GR measure for $\Lambda$ if
there is an indecomposable $\Lambda$-module $M$ with $\mu(M)=I$. If $M$ is an
indecomposable $\Lambda$-module, we call an inclusion $X\subset M$
with $X$ indecomposable a {\bf GR inclusion} provided
$\mu(M)=\mu(X)\cup\{|M|\}$, thus if and only if every proper
submodule of $M$ has Gabriel-Roiter measure at most $\mu(X)$. In
this case, we call $X$ a {\bf GR submodule} of $M$.  Note that the
factor of a GR inclusion is indecomposable.

Using Gabriel-Roiter  measure, Ringel obtained a partition of the
module category for any artin algebra of infinite representation
type \cite{R3,R4}: there are infinitely many GR measures $I_i$ and
$I^i$ with $i$ natural numbers, such that
$$I_1<I_2<I_3<\ldots\quad \ldots <I^3<I^2<I^1$$ and such that any
other GR measure $I$ satisfies $I_i<I<I^j$ for all $i,j$. The GR
measures $I_i$ (resp. $I^i$) are called take-off (resp. landing)
measures. Any other GR measure is called a central measure. An
indecomposable module $M$ is  called a  take-off (resp. central,
landing) module if its GR measure $\mu(M)$ is a take-off (resp.
central, landing) measure. It was proved in \cite{R3} that every landing
module is preinjective in general sense.

Let $I,I'$ be two GR measures for $\Lambda$. We call $I'$  a {\bf
direct successor} of $I$ if, first, $I<I'$ and second, there does
not exist a GR measure $I''$ with $I<I''<I'$. The so-called {\bf
Successor Lemma} in \cite{R4} states that any GR measure $I$
different from $I^1$, the maximal one, has a direct successor.
However, a GR measure, which is not the minimal one $I_1$, may not admit a
direct predecessor.  A {\bf GR segment} is a sequence of GR measures
closed under direct predecessors and direct predecessors.
It was conjectured that an artin algebra if of wild type if and only if
it has infinitely many GR segments.

The GR measure for path algebras of tame quivers (over an algebraically closed field)
were studied in
\cite{Ch2, Ch3, Ch4}. In particular, the connection between  GR measure and
Auslander-Reiten theory was studied.  For example, let $\delta$
be the minimal positive imaginary root and $H_1$ be an
indecomposable homogeneous simple module (with dimension vector
$\delta$), then the sequence of irreducible monomorphisms $H_1\ra
H_2\ra H_3\ra\ldots$  gives a sequence of GR submodules.
Moreover, $\mu(H_{i+1})$ is the direct successor of $\mu(H_i)$ for
each $i\geq 1$. It was also shown in \cite{Ch4} that for a tame
quiver, there are,  but only finitely many, GR measures which have
no direct predecessors, and in \cite{Ch8} that the number of the GR segments
is bounded by $b+3$, where $b$ is the number of the isomorphism classes of exceptional
quasi-simple modules.

So far, not much about the GR measures for wild quivers is known.
In \cite{F}, it was proved for $3$-Kronecker quiver
that there are uncountable many Gabriel-Roiter measures (modules of infinite length were considered).
It was also conjectured there
the existence of the maximal central measure (which should be an infinite sequence of natural numbers).
In \cite{Ch5}, the wild $n$-Kronecker quivers were studied and infinitely many GR measures admitting no direct
predecessors were constructed.

In this paper, the study will  be focused on $n$-Kronecker
quivers:
$$\xymatrix{ 2\ar@/^1pc/[rr]^{\alpha_1}\ar@/_1pc/[rr]_{\alpha_n}&\vdots& 1\\}$$
with $n\geq 3$. The indecomposable modules whose GR submodules are
preprojective  will be studied. Similar to the case of homogeneous modules for tame quivers, the
following result will be shown:

\begin{theo1}Let $X$ be an indecomposable module containing a
preprojective module as a GR submodule. Then for each $i\geq 0$ and
each $j\geq 1$, the irreducible monomorphism
$\tau^{-i}X[j]\ra\tau^{-i}X[j+1]$ is a GR inclusion. Moreover, up to
isomorphism, $\tau^{-i}X[j]$ is the unique GR submodule of
$\tau^{-i}X[j+1]$.
\end{theo1}
Here $\tau$ is the Auslander-Reiten translation and $X[j]$ denotes
the regular module with quasi-simple submodule $X$ and quasi-length
$j$. Note that $X$ is always quasi-simple under the assumption.

The vectors $(1,c)$ with $1\leq c\leq n-1$ are imaginary roots. An
indecomposable module (existence by \cite{Ka}) $X$ with  dimension vector $(1,c)$ contains
the projective simple module as a GR submodule. Thus the above theorem
implies that $\tau^{-i}X[j+1]$ contains $\tau^{-i}X[j]$, up to
isomorphism, as the unique GR submodule.   Using some combinatorial
studies, it will be seen that the Gabriel-Roiter measures
$\mu(\tau^{-i}X[j])$ are namely determined by $i$, $j$ and the
dimension vectors $(1,c)$.

\begin{theo2}Let $X$ be an indecomposable module with dimension
$\udim X=(1,c)$ for some $1\leq c\leq n-1$ and $M$ an indecomposable
module. Let $i\geq 0$ and $j\geq 1$.  Then
$\mu(M)=\mu(\tau^{-i}X[j])$ if and only if $M\cong \tau^{-i}Y[j]$
for some indecomposable module $Y$ with $\udim Y=(1,c)=\udim X$.
\end{theo2}

As a consequence of this theorem, we may obtain the following
result, which can be used to show that the number of the GR segments
for $n$-Kronecker quivers is unbounded:

\begin{theo3} Let $X$ be an indecomposable module with dimension
vector $\udim X=(1,c)$ for some $1\leq c\leq n-1$. Then for each $i\geq
0$ and $j\geq 1$, $\mu(\tau^{-i}X[j+1])$ is the direct successor of
$\mu(\tau^{-i}X[j])$.
\end{theo3}

As an application of the general discussion, we will study in detail
the regular components  over the $3$-Kronecker quiver, which contains an indecomposable
module with dimension vector $(1,1)$ or $(1,2)$.
In \cite{Zh}, it was proved that the indecomposable modules in a regular component
of any wild hereditary algebra are uniquely determined by their dimensions.
Thus in each regular component, only finitely many indecomposable modules have the same length.
It may be asked if these modules of the same length have the same GR measure.  However, this is
not always the case (see Section \ref{anti-ex} for an example).
We can partially answer this question as follows:

\begin{theo4}Let $Q$ be the $3$-Kronecker quiver. Let $X$ be an indecomposable module with dimension
vector $(1,1)$ or $(1,2)$ and $\mathcal{C}$ a regular component containing $X$.
Then the GR measures of the indecomposable modules in $\mathcal{C}$
are uniquely determined by their dimension vectors.
\end{theo4}

In section \ref{preliminaries}, some preliminaries, notations and elementary results
will be recalled. Section \ref{preprogr} is devoted to a study of the
indecomposable regular modules with preprojective GR submodules. In
particular, the theorem concerning the coincidence of the
irreducible monomorphisms and the GR inclusions will be shown. In
Section \ref{(1,c)}, the indecomposable modules $\tau^{-i}X$ with $i\geq 0$
and $\udim X=(1,c)$ will be studied. In particular, the direct
successors of $\tau^{-i}X[j]$ will be described for $i\geq 0$ and
$j\geq 1$. The regular components containing an indecomposable module with dimension
vector $(1,1)$ or $(1,2)$ over a $3$-Kronecker quiver will be studied in detail in Section \ref{3Kronecker}.

\section{Preliminaries and known results}\label{preliminaries}

\subsection{Representations of $n$-Kronecker quivers}
 
We recall some facts of representations of quivers. The best
references are \cite{ARS,R2}. We also refer to \cite{K,R1} for
general structures of representations of wild quivers. We also refer
to \cite{Ka} for the relationship between the roots and the
indecomposable modules.

Let $Q$ be an
$n$-Kronecker quiver with $n\geq 3$ and $k$ an algebraically closed
field. A representation of $Q$ over $k$ is simply called a module.
The
Cartan matrix and the Coxeter matrix are the following: $$C=\left(\begin{array}{cc}1 & 0 \\
n & 1\\\end{array}\right),\quad \Phi=-C^{-t}C=
\left(\begin{array}{cr}n^2-1 & n \\
                        -n & -1 \\ \end{array}\right),\quad \Phi^{-1}=
\left(\begin{array}{rc}-1 & -n \\
                         n & n^2-1 \\
                         \end{array}\right).$$
The dimension vectors can be calculated using $\udim\tau M=(\udim
M)\Phi$ if $M$ is not projective and $\udim \tau^{-1}N=(\udim
N)\Phi^{-1}$ if $N$ is not injective, where $\tau$ denotes the
Auslander-Reiten translation. The quadratic form
$q((x_1,x_2))=x_1^2+x_2^2-nx_1x_2$. A vector $(a,b)$ is a real root
if $q((a,b))=1$. The positive real roots are precisely the dimension
vectors of the indecomposable preprojective modules and those of the
indecomposable preinjective modules. For each positive imaginary
root $(a,b)$, i.e., $q((a,b))<0$, there are infinitely many
indecomposable modules with dimension vector $(a,b)$. Note that the
dimension vector of an indecomposable module is either a positive
real root or a positive imaginary root.  The Euler form is $\langle
(x_1,x_2),(y_1,y_2) \rangle=x_1y_1+x_2y_2-nx_1y_2$. For two
indecomposable modules $X$ and $Y$,
$$\dimHom(X,Y)-\dimExt^1(X,Y)=\langle\udim X,\udim Y\rangle.$$

The Auslander-Reiten quiver of $Q$ consists of one preprojective
component, one preinjective component and infinitely many regular
ones. An indecomposable regular module $X$ is called quasi-simple if
the Auslander-Reiten sequence starting with $X$ has an
indecomposable middle term. For each indecomposable regular module
$M$, there is a unique quasi-simple module $X$ and a unique natural
number $r\geq 1$ (called quasi-length of $M$ and denoted by
$\ql(M)=r$) such that there is a sequence of irreducible
monomorphisms $X=X[1]\ra X[2]\ra\ldots\ra X[r]=M$. In this case, we denote by $\qs(M)=X$. 
Dually, there is a unique quasi-simple module $Y$ (denote by $\qt(M))$ with a sequence of irreducible
epimorphisms $M=[r]Y\ra\ldots\ra [2]Y\ra [1]Y=Y$.

\subsection{Properties of GR measure}
We present some known results being used later on.  The following
proposition was proved in \cite{R3}:
\begin{prop}\label{GA} Let $\Lambda$ be an artin algebra and $X$ and
$Y_1,Y_2,\ldots,Y_r$ be indecomposable modules. Assume that
$X\stackrel{f}{\ra}\oplus_{i=1}^r {Y_i}$ is a monomorphism.
\begin{itemize}
  \item[(1)] $\mu(X)\leq \textrm{max}\{\mu(Y_i)\}$.
  \item[(2)] If $\textrm{max}\{\mu(Y_i)\}=\mu(X)$,
          then $f$ splits.
\end{itemize}
\end{prop}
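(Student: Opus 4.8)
The plan is to prove the two assertions together by induction on the length $|X|$, with a secondary induction on the number $r$ of summands, after reformulating them as the single statement: a monomorphism from an indecomposable $X$ forces $\mu(X)\le\max_i\mu(Y_i)$, and the inequality is strict unless $X$ is isomorphic to a direct summand of $\bigoplus_i Y_i$. Since by Krull--Schmidt $X$ is a summand of $\bigoplus_i Y_i$ exactly when $f$ splits, this reformulation yields both (1) and (2) at once. I would first record the elementary monotonicity $\mu(U)<\mu(M)$ for every proper indecomposable submodule $U\subsetneq M$ (a chain realizing $\mu(U)$ is extended by $M$ itself), and dispose of the base case $|X|=1$: here $\mu(X)=\{1\}$ is minimal, some $p_if$ is a nonzero, hence injective, map out of the simple $X$, and if $\max_i\mu(Y_i)=\{1\}$ then $\bigoplus_i Y_i$ is semisimple and $f$ splits.

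For the inductive step, fix a GR submodule $X'\subset X$, so that $\mu(X)=\mu(X')\cup\{|X|\}$ and $\mu(X')\ll\mu(X)$. The key structural observation I would use is a modular-law argument: if a nonzero direct summand $D$ of $\bigoplus_i Y_i$ is contained in $X$, then $X=D\oplus(X\cap D')$ for a complement $D'$, forcing $D=X$ by indecomposability. In particular $f$ splits iff $X\cong Y_j$ for some $j$, which settles the equality case; moreover the restriction $f|_{X'}$ can never split (as $X'\subsetneq X$), so applying the induction hypothesis to $f|_{X'}$ gives the strict bound $\mu(X')<\max_i\mu(Y_i)$. Next I would remove redundant components: if deleting some $Y_i$ keeps $f$ a monomorphism, the secondary induction on $r$ applies to the smaller sum, and a short check (again using Krull--Schmidt and the fact that the maximum cannot be attained solely at the deleted component) transports both conclusions back. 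Hence one may assume every component is essential, i.e. $N_i:=\bigcap_{j\ne i}\ker f_j\ne 0$ for all $i$, and that $f$ does not split.

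Within this reduced situation the argument splits on whether some $p_if$ is itself injective. If $p_if\colon X\to Y_i$ is a monomorphism then $X$ embeds in the indecomposable $Y_i$; non-splitting of $f$ rules out $p_if$ being an isomorphism, so $X$ is a \emph{proper} indecomposable submodule of $Y_i$ and the monotonicity lemma gives the desired strict inequality $\mu(X)<\mu(Y_i)\le\max_i\mu(Y_i)$. The genuine obstacle is the complementary case, where no $p_if$ is injective while each is nonzero and no component is redundant: here the $N_i$ embed into the respective $Y_i$ and sit in $X$ as an internal direct sum $\bigoplus_i N_i\subsetneq X$, yet an indecomposable submodule of $X$ realizing $\mu(X)$ need not lie in any single component. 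The difficulty is sharply order-theoretic: from $\mu(X')<\mu(Y_{i_0})$ and $\mu(X')\ll\mu(X)$ one cannot in general conclude $\mu(X)\le\mu(Y_{i_0})$, since the total order on $\mathcal{P}(\mathbb{N})$ does not respect the ``starts with'' relation once the newly added length $|X|$ is compared with the next jump inside $Y_{i_0}$. I expect to resolve this by promoting the bound for $X'$ to a \emph{single} component: one should locate an index $i_0$ for which a GR filtration of $X'$ actually embeds into $Y_{i_0}$ as part of a GR filtration there (so that $\mu(X')\ll\mu(Y_{i_0})$), and then bound the length of the minimal indecomposable submodule of $Y_{i_0}$ properly containing $X'$ by $|X|$, using the monomorphism $f$ to compare the two ways of extending $X'$. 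Making this choice of $i_0$ and the accompanying length estimate precise is the main work; everything else is bookkeeping with the order on $\mathcal{P}(\mathbb{N})$ and Krull--Schmidt.
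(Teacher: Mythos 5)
The paper gives no proof of this proposition at all---it is quoted from Ringel's foundational paper \cite{R3}, where it is the ``Main Property'' and its proof is the principal technical result. So the only question is whether your argument is complete, and it is not: you yourself flag the decisive case (no projection $p_if$ injective, every component essential, $f$ non-split) as ``the main work'' still to be done, and that case is exactly where the entire difficulty of the statement lives. The reductions you do carry out (base case, removal of redundant summands, the modular-law observation that $f$ splits as soon as $f(X)$ contains a nonzero direct summand of $\oplus_{i}Y_i$, the case where some $p_if$ is injective) are correct but routine. Your proposed resolution of the hard case---locate an index $i_0$ such that a GR filtration of $X'$ embeds into $Y_{i_0}$ as part of a GR filtration there---is not available in general: in precisely this case the indecomposable $X'$ need not embed into any single $Y_{i_0}$, since all the restrictions $p_if|_{X'}$ may fail to be injective while only their direct sum is injective, so there is no component into which the bound for $X'$ can be ``promoted.'' The order-theoretic obstruction you correctly identify ($\mu(X')<\mu(Y_{i_0})$ together with $\mu(X')\ll\mu(X)$ does not yield $\mu(X)\le\mu(Y_{i_0})$) is therefore not overcome by the sketch.

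There is also a genuine error: the biconditional ``$f$ splits iff $X\cong Y_j$ for some $j$'' is false in the direction you invoke to ``settle the equality case.'' Krull--Schmidt gives only ``$f$ splits $\Rightarrow X\cong Y_j$''; an abstract isomorphism $X\cong Y_j$ does not make the given monomorphism split. For example, over $\Lambda=k[t]/(t^2)$ take $X=Y_1=k$ simple and $Y_2=\Lambda$, with $f=(0,\iota)$ where $\iota$ identifies $X$ with the socle of $\Lambda$: then $X\cong Y_1$ and $f$ is injective but admits no retraction. What your modular-law argument actually proves is ``$f$ splits iff $f(X)$ contains a nonzero direct summand of $\oplus_iY_i$,'' and closing part (2) requires showing that $\mu(X)=\max_i\mu(Y_i)$ forces some $p_jf$ to be an isomorphism---which again can only come out of the unfinished hard case of the induction. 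For a correct and complete argument you should consult \cite{R3} or \cite{R4}.
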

We collect some properties of GR inclusions in the following lemma.
The proof can be found for example in \cite{Ch1,Ch2}.
\begin{lemm}\label{GR} Let $\Lambda$ be an artin algebra and $X\subset M$ a GR
inclusion.
\begin{itemize}
  \item[(1)] If all irreducible maps to $M$ are monomorphisms, then
              the GR inclusion is an irreducible map.
  \item[(2)] Every non-zero homomorphism $Y\ra M/X$, which is not an epimorphism,
             factors through the canonical projection  $M\ra M/X$.
  \item[(3)] There is an irreducible monomorphism $X\ra Y$ with $Y$
             indecomposable and an epimorphism $Y\ra M$.
  \item[(4)] If $Y$ is indecomposable with $\mu(X)<\mu(Y)<\mu(M)$, then $|Y|>|M|$.
\end{itemize}
\end{lemm}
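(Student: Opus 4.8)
The plan is to prove all four parts from two tools: Proposition \ref{GA}, and the reformulation of the GR inclusion $X\subset M$ as asserting that $X$ is an indecomposable proper submodule of $M$ whose measure dominates every proper submodule, i.e. $\mu(U)\le\mu(X)$ for all $U\subsetneq M$, together with $\mu(M)=\mu(X)\cup\{|M|\}$. I will also use freely that $\mu(A)\le\mu(B)$ whenever $A\subseteq B$ with $A$ indecomposable, that $\mu(A\oplus B)=\max\{\mu(A),\mu(B)\}$, and that $\max\mu(B)=|B|$ for indecomposable $B$ (the optimal chain may always be taken to end at $B$). For (1), I would factor the GR inclusion $\iota\colon X\to M$ through the sink map. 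Since $X$ is proper, $\iota$ is not a split epimorphism, so it factors as $\iota=\theta s$ through the minimal right almost split map $\theta\colon\bigoplus_i W_i\to M$, every component $\theta_i\colon W_i\to M$ being irreducible. By hypothesis each $\theta_i$ is a monomorphism, hence identifies $W_i$ with a proper submodule of $M$ (an irreducible map between indecomposables is never an isomorphism), so $\mu(W_i)\le\mu(X)$. Proposition \ref{GA}(1) applied to $s\colon X\to\bigoplus_i W_i$ gives $\mu(X)\le\max_i\mu(W_i)$, whence equality, and Proposition \ref{GA}(2) forces $s$ to be a split monomorphism. As $X$ is indecomposable, $X\cong W_{i_0}$ for some $i_0$, and under this identification $\iota$ agrees with the irreducible map $\theta_{i_0}$ up to an automorphism of the source; hence $\iota$ is irreducible.

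For (2), I would first reduce to lifting an inclusion. Factoring $f$ as $Y\twoheadrightarrow U\hookrightarrow M/X$ with $U=\im f\subsetneq M/X$, it suffices to lift the inclusion $U\hookrightarrow M/X$ along $\pi\colon M\to M/X$. Set $N=\pi^{-1}(U)$; since $U$ is proper, $N\subsetneq M$, so $\mu(N)\le\mu(X)$, while $X\subseteq N$ gives $\mu(N)\ge\mu(X)$, hence $\mu(N)=\mu(X)$. Decomposing $N$ and applying Proposition \ref{GA} to the monomorphism $X\hookrightarrow N$ exactly as in (1) shows it is a split monomorphism; being the kernel of $N\to U$, the sequence $0\to X\to N\to U\to 0$ then splits, and a splitting supplies the lift $U\to M$, hence the factorization of $f$ through $\pi$. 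The essential point here is the reduction to $U$, which replaces the possibly large $Y$ by a genuine submodule of $M/X$ and thereby makes $\mu(N)=\mu(X)$ available.

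For (3) I would factor $\iota$ through the source map $\sigma\colon X\to\bigoplus_j Y_j$ (legitimate since $\iota$ is not a split monomorphism), writing $\iota=\sum_j h_j\sigma_j$ with each $\sigma_j\colon X\to Y_j$ irreducible, and then locate a single index $j_0$ for which $\sigma_{j_0}$ is a monomorphism and $h_{j_0}$ an epimorphism; then $Z=Y_{j_0}$ is the desired module. I expect this simultaneity to be the main obstacle. That one can choose a summand with $X\to Z$ a monomorphism follows from $\iota$ being a monomorphism factoring through $\sigma$; epimorphicity of $h_{j_0}$ must be forced by GR maximality, since if $\im h_{j_0}$ were proper it would have measure $\le\mu(X)$, and weighing this against the contributions of the remaining summands through Proposition \ref{GA} should contradict the factorization of the monomorphism $\iota$. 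Pinning down a single index that is simultaneously mono and epi, rather than merely finding some mono component and some (possibly different) epi component, is the delicate step.

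Finally, (4) is purely order-theoretic. From $\mu(X)<\mu(Y)<\mu(M)$ and the fact that $\mu(M)=\mu(X)\cup\{|M|\}$ starts with $\mu(X)$, the transitivity remark ($I<J<I'$ with $I'$ starting with $I$ forces $J$ to start with $I$) yields $\mu(X)\ll\mu(Y)$. Thus $\mu(X)$ and $\mu(Y)$ agree below $\max\mu(X)$, the set $B=\mu(Y)\setminus\mu(X)$ is nonempty with $\min B>\max\mu(X)$ and $\max B=|Y|$. Above $\max\mu(X)$ the measure $\mu(M)$ contributes only $\{|M|\}$, so $\mu(Y)<\mu(M)$ forces the smallest element of $B\triangle\{|M|\}$ to lie in $\mu(M)$; as $\mu(Y)\ne\mu(M)$ this can only mean $|M|\notin B$ and $|M|<\min B$. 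Hence $|Y|=\max B\ge\min B>|M|$, as claimed.
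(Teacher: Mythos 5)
The paper does not actually prove this lemma; it only refers to \cite{Ch1,Ch2}, so there is no in-house argument to compare against. Your arguments for (1), (2) and (4) are correct: the reduction in (2) to the preimage $N=\pi^{-1}(\im\,f)$ and the splitting of $X\hookrightarrow N$ via Proposition \ref{GA}(2) is the standard route, and (4) is indeed pure combinatorics of the total order on $\mathcal{P}(\mathbb{N})$. (In (1) the last sentence is slightly loose: the splitting of $s$ only yields that some component $s_{i_0}$ is an isomorphism, not that the remaining components vanish; but ``$\iota=\theta s$ with $\theta$ minimal right almost split and $s$ a split monomorphism'' is precisely the standard criterion for irreducibility of $\iota$, so the conclusion stands once you invoke that criterion rather than trying to identify $\iota$ with $\theta_{i_0}$.)

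Part (3), however, you leave genuinely open: you state the plan and explicitly flag the key step --- producing a single index $j_0$ for which $h_{j_0}$ is surjective and $\sigma_{j_0}$ injective --- as unresolved. It can be closed with exactly the tools you already use. First, suppose no $h_j$ is surjective. Then each $\im\,h_j$ is a proper submodule of $M$, hence $\mu(\im\,h_j)\leq\mu(X)$, and the map $(h_j\sigma_j)_j\colon X\to\bigoplus_j\im\,h_j$ is a monomorphism because its composition with the summation map $\bigoplus_j\im\,h_j\to M$ equals $\iota$. Decomposing each $\im\,h_j$ into indecomposables and applying Proposition \ref{GA} as in your parts (1) and (2), this monomorphism splits; since $\End(X)$ is local, some composite $r_{j_0}h_{j_0}\sigma_{j_0}$ is invertible, so $\sigma_{j_0}$ is a split monomorphism into the indecomposable $Y_{j_0}$, hence an isomorphism --- contradicting the irreducibility of $\sigma_{j_0}$. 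Therefore some $h_{j_0}$ is surjective. Second, the simultaneity you worried about is automatic: an irreducible map is either a monomorphism or a proper epimorphism, and if $\sigma_{j_0}$ were a proper epimorphism then $|Y_{j_0}|<|X|<|M|$, which is incompatible with the existence of the epimorphism $h_{j_0}\colon Y_{j_0}\to M$. Hence $\sigma_{j_0}$ is an irreducible monomorphism and $h_{j_0}$ the required epimorphism. Without these two observations the proposal does not establish (3).
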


\subsection{The partition for $n$-Kronecker quivers}
Let $Q$ be an $n$-Kronecker quiver. We are going to describe the
partition obtained using GR measure for $Q$. 

The preprojective component is the following (note that there are
actually $n$ arrows from $P_i$ to $P_{i+1}$):
$$\xymatrix@R=8pt@C=12pt{
   &P_2=(1,n)\ar@{->}[rd]\ar@{.}[rr] &&
    P_4=(n^2-1,n^3-2n)\ar@{->}[rd]&\ldots&
     \\
  P_1=(0,1)\ar@{->}[ru]\ar@{.}[rr] &&
  P_3=(n,n^2-1)\ar@{->}[ru]\ar@{.}[rr]&&
  P_5&\ldots\\}$$
Since every irreducible map in the preprojective component is a
monomorphism, $P_i$ is, up to isomorphism, the unique GR submodule
of $P_{i+1}$ by Lemma \ref{GR}(1).
Similarly, the preinjective component is of the following form:
$$\xymatrix@R=8pt@C=12pt{
   &\ldots&Q_3=(n^3-2n,n^2-1)\ar@{->}[rd]\ar@{.}[rr] &&
    Q_1=(n,1)\ar@{->}[rd]&
     \\
  \ldots&Q_4\ar@{->}[ru]\ar@{.}[rr] &&
  Q_2=(n^2-1,n)\ar@{->}[ru]\ar@{.}[rr]&&
  Q_0=(1,0)\\}$$
Let us denote by $I_i$ (resp. $I^i$) the take-off (resp. landing) measures and
by
$\mathcal{A}(I)$ the set of the representatives (of the isomorphism classes)
of indecomposable modules with GR measure $I$.
\begin{prop}\label{part} 
\begin{itemize}
   \item[(1)]  The take-off part contains precisely
                the simple injective module and the indecomposable preprojective modules.
   \item[(2)] The landing part contains precisely all non-simple indecomposable preinjective modules.
   \item[(3)]  An indecomposable module is a central module if and only if
               it is regular.
\end{itemize}
\end{prop}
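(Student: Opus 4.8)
The plan is to establish (1) and (2) and then read off (3). Since every indecomposable is preprojective, regular, or preinjective, and these three classes are disjoint, once (1) identifies the take-off part as the preprojectives together with the simple injective $Q_0=(1,0)$, and (2) identifies the landing part as the non-simple preinjectives, the central part is forced to be exactly the regular modules, so (3) is purely formal. The one point worth isolating at the start is that both simples $(0,1)=P_1$ and $(1,0)=Q_0$ have $\mu=\{1\}$, the least GR measure; hence $Q_0$ is a take-off module despite being preinjective, which is what makes the statement of (1) asymmetric.

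For (1), the excerpt already gives that $P_i$ is (up to isomorphism) the unique GR submodule of $P_{i+1}$, so $\mu(P_{i+1})=\mu(P_i)\cup\{|P_{i+1}|\}$ and $\mu(P_1)<\mu(P_2)<\cdots$ is an increasing chain with each term starting with the previous. I would then use the standard vanishing $\Hom(Y,P)=0$ for $Y$ regular or preinjective and $P$ preprojective: a submodule inclusion is a nonzero map, so every indecomposable submodule of a preprojective module is again preprojective, and the GR filtration of $P_i$ runs through $P_1,\dots,P_i$ only. Combined with the comparison isolated below, this will show that any indecomposable $N$ with $\mu(N)\le\mu(P_i)$ is itself preprojective or equal to $Q_0$, hence lies in the finite set $\{P_1,\dots,P_i\}\cup\{Q_0\}$; thus each $\mu(P_i)$ has only finitely many measures below it and is a take-off measure, with nothing else take-off.

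For (2) I would invoke the cited theorem that every landing module is preinjective \cite{R3}; together with $Q_0$ being take-off (hence not landing) this gives landing $\subseteq$ non-simple preinjective at once. For the reverse inclusion I would run the argument dual to the preprojective one — using that the irreducible epimorphisms in the preinjective component organize a chain for the quotient (co-)filtration — to show each non-simple $Q_i$ has only finitely many GR measures above it, hence is landing. The same cited result disposes of one half of (3) for free: no regular module is landing, so for (3) it only remains to see that no regular module is take-off.

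The crux, and the step I expect to be the main obstacle, is the comparison that every regular or non-simple preinjective $N$ satisfies $\mu(N)>\mu(P_j)$ for all $j$ — equivalently that the preprojective measures are cofinal from below and exhaust the take-off part. This is not a formal order comparison: one must control the GR-filtration lengths of $N$ against the sequence $|P_1|<|P_2|<\cdots$, which grows geometrically with the dominant eigenvalue of $\Phi$. The content is that a non-preprojective module's filtration cannot keep pace with this growth, so at the first length where the two length-profiles disagree it is $N$'s that is smaller (and, relatedly, that a preprojective measure is never realized by a non-preprojective module). I would attack this through the submodule structure of regular modules — bounding their smallest non-simple submodules and iterating — and I expect this estimate, rather than the chain bookkeeping or the duality, to carry the whole proof. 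A softer route to ``regular $\Rightarrow$ not take-off'' that I would keep in reserve is to produce, for each regular measure, infinitely many non-isomorphic modules realizing it (for instance all indecomposables of a fixed imaginary dimension $(1,c)$ share a single measure), since a take-off measure can only carry finitely many modules.
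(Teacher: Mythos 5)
The decisive step of your argument for (1) --- and, as you have set things up, for (3) as well --- is the comparison that every regular and every non-simple preinjective indecomposable $N$ satisfies $\mu(N)>\mu(P_j)$ for all $j$. You correctly isolate this as the crux, but you do not prove it: ``bounding their smallest non-simple submodules and iterating'' is a plan, not an argument, and it is precisely the delicate quantitative point (controlling GR-filtration length profiles of $N$ against the geometrically growing sequence $|P_1|<|P_2|<\cdots$). Without it, neither ``each $\mu(P_i)$ is a take-off measure'' nor ``no regular module is take-off'' is established. The reserve route does not close the gap either: while every regular indecomposable over an $n$-Kronecker quiver has an imaginary-root dimension vector and hence infinitely many indecomposables share its dimension vector, the claim that they all share its \emph{GR measure} is exactly the nontrivial content of Theorems \ref{thm2} and the results of Section \ref{3Kronecker}, proved there only for special families; it cannot be assumed for an arbitrary regular measure.

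The paper's proof of (1) avoids any such estimate by inducting along the take-off sequence itself, whose existence is part of Ringel's partition theorem. If $\mu(M)=I_{m+1}$, then a GR submodule $Y$ of $M$ satisfies $\mu(Y)=I_i$ for some $i\le m$, hence $Y\cong P_i$ by the inductive hypothesis $\mathcal{A}(I_i)=\{P_i\}$; Lemma \ref{GR}(3) then yields an epimorphism $P_{i+1}\to M$, so $|M|\le |P_{i+1}|$, and if the inequality were strict the measure $\mu(P_{m+1})=I_i\cup\{|P_{i+1}|,\ldots,|P_{m+1}|\}$ would lie strictly between $I_m$ and $\mu(M)=I_{m+1}$, contradicting that $I_{m+1}$ is the direct successor of $I_m$. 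Hence $M\cong P_{i+1}$ and $i=m$. This identifies $\mathcal{A}(I_m)=\{P_m\}$ for all $m\ge 2$ with no growth estimate at all, and (3) then follows as you say. I recommend you either adopt this induction or actually carry out your comparison; as written, the proof of (1) (and hence of (3)) is incomplete. Your part (2) is essentially recoverable: the point is that the AR sequences $0\to Q_{r+1}\to Q_r^n\to Q_{r-1}\to 0$ and Proposition \ref{GA} make the preinjective measures a strictly decreasing chain, which together with the cited fact that landing modules are preinjective forces every non-simple $Q_r$ to be landing --- the ``dual argument'' you allude to should be this chain argument, not a dual of the problematic estimate.
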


\begin{proof}
(1) We show, by induction on $m$, that $\mathcal{A}(I_m)=\{P_m\}$ for each $m\geq 2$.
If $m=2$, the assertion holds by
the description of $I_2$, which is the GR measure of a local module with maximal length.
Assume that $\mu(M)=I_{m+1}$ for some
indecomposable module $M$. Since $M$ is not simple, we may assume
that $Y$ is a GR submodule of $M$. Then $\mu(Y)=I_i\leq I_m$ for
some $i\leq m$, and thus $Y\cong P_i$ by induction. It follows from
Lemma \ref{GR}(3) that there is an epimorphism $P_{i+1}\ra M$.
In particular $|M|\leq |P_{i+1}|$. If the equality does not hold,
then
$$I_{m+1}=\mu(M)=I_i\cup\{|M|\}>I_i\cup\{|P_{i+1}|,\ldots,|P_m|,|P_{m+1}|\}
>I_i\cup\{|P_{i+1}|,\ldots,|P_m|\}=I_m.$$
This is a contradiction because the GR measure
$\mu(P_{m+1})=I_i\cup\{|P_{i+1}|,\ldots,|P_m|,|P_{m+1}|\}$ lies
between $I_m$ and $I_{m+1}$. Therefore, $|P_{i+1}|=|M|$ and thus
$P_{i+1}\cong M$. Since $\mu(M)=I_{m+1}$, we have $i=m$ and thus
$\mathcal{A}(I_{m+1})=\{P_{m+1}\}$.

(2) Since there is a short exact sequence $0\ra
I_{r+1}\ra I_r^n\ra I_{r-1}\ra 0 $ for each $r\geq 1$,
$\mu(I_{r+1})<\mu(I_r)$ by Proposition \ref{GA}. Because landing
modules are preinjective (see \cite{R3}), $\mathcal{A}(I^m)$ contains precisely
one isomorphism class $Q_m$.

(3) is straightforward.
\end{proof}

\section{Regular modules with preprojective GR submodules}\label{preprogr}

Let $Q$ be an $n$-Kronecker quiver
with $n\geq 3$.  Before studying the regular modules whose GR
submodules are preprojective, we present some combinatorial
descriptions of the indecomposable regular modules with dimension
$(a,b)$ such that $a\leq b$. We write two vectors $(a,b)<(c,d)$ if
$a<c$ and $b<d$.
\begin{lemm}\label{dim1}Let $X$ be an indecomposable regular module with dimension vector $\udim X=(a,b)$
such that $a\leq b$. Let $i\geq 1$ and assume that
$\udim\tau^{-i}X=(c,d)$. Then
\begin{itemize}
  \item[(1)] $(a,b)<(c,d)$.
  \item[(2)] $c<d$.
  \item[(3)] For each $r\geq 0$, $\sum_{i=0}^r\udim \tau^{-i}X<\udim
             \tau^{-(r+1)}X$.
\end{itemize}
\end{lemm}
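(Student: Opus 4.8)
The plan is to reduce the whole statement to the explicit linear recurrence governing the dimension vectors. Writing $\udim\tau^{-i}X=(a_i,b_i)$ with $(a_0,b_0)=(a,b)$, and using that a regular module is never injective so that $\udim\tau^{-1}N=(\udim N)\Phi^{-1}$ applies at every step, I obtain $a_{i+1}=nb_i-a_i$ and $b_{i+1}=-na_i+(n^2-1)b_i$. Since each $\tau^{-i}X$ is again an indecomposable regular module, its dimension vector is a positive root, so $a_i,b_i>0$ throughout; these are the only external facts I will use.

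The core is a single inductive step. Assuming $0<a_i\le b_i$, I verify three inequalities by elementary estimates, each time bounding $a_i$ from above by $b_i$ and using $n\ge 3$: first $a_{i+1}-a_i=nb_i-2a_i\ge(n-2)b_i>0$; second $b_{i+1}-b_i=-na_i+(n^2-2)b_i\ge(n-2)(n+1)b_i>0$; and third $b_{i+1}-a_{i+1}=-(n-1)a_i+(n^2-n-1)b_i\ge n(n-2)b_i>0$. The third inequality gives $a_{i+1}<b_{i+1}$, so the hypothesis $a\le b$ propagates (and becomes strict for $i\ge 1$), which is exactly part (2). The first two inequalities show that both coordinate sequences $(a_i)$ and $(b_i)$ are strictly increasing, whence $(a,b)=(a_0,b_0)<(a_i,b_i)$ for every $i\ge 1$, which is part (1).

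For part (3) I strengthen the monotonicity to a doubling estimate. Using the strict inequality $a_i<b_i$ now available for $i\ge 1$, the same computation yields $a_{i+1}-2a_i=nb_i-3a_i>(n-3)b_i\ge 0$ and $b_{i+1}-2b_i=-na_i+(n^2-3)b_i>(n^2-n-3)b_i>0$, so that $(a_{i+1},b_{i+1})>2(a_i,b_i)$ componentwise for all $i\ge 1$. I then induct on $r$: the case $r=0$ is part (1), and for the step I estimate $\sum_{i=0}^{r+1}\udim\tau^{-i}X=\big(\sum_{i=0}^{r}\udim\tau^{-i}X\big)+\udim\tau^{-(r+1)}X<2\,\udim\tau^{-(r+1)}X<\udim\tau^{-(r+2)}X$, where the first strict inequality is the induction hypothesis and the second is the doubling estimate at index $r+1\ge 1$.

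I do not expect a genuine conceptual obstacle; the statement is essentially an exercise in controlling the Coxeter transformation on dimension vectors. The only points requiring care are organizational: the inductions must be ordered so that part (2) (the strict inequality $a_i<b_i$) is secured before the doubling estimate, since the latter relies on strictness; and one must check that every coefficient in $n$ that appears ($n-2$, $(n-2)(n+1)$, $n(n-2)$, $n-3$, $n^2-n-3$) is nonnegative, and positive where a strict conclusion is needed, for all $n\ge 3$, together with the positivity $b_i>0$ coming from regularity.
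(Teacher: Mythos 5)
Your proof is correct and follows essentially the same route as the paper: both compute the action of $\Phi^{-1}$ explicitly and establish the doubling estimate $\udim\tau^{-(i+1)}X\geq 2\,\udim\tau^{-i}X$ (the paper's $c-2a=nb-3a\geq 0$, $d-2b=(n^2-3)b-na>0$) before inducting on $r$ for part (3). Your write-up is merely more careful about securing part (2) first and tracking where strictness is needed, which the paper leaves implicit.
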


\begin{proof} We show (3) and (1) and (2) follow similarly.
Let $i=1$ and we show $c-2a\geq 0$ and $d-2b>0$. Since $n\geq 3$ and
$b\geq a$, we have $c-2a=nb-3a\geq 0$. Note that the equality hold
only for $n=3$ and $a=b$. Similarly,
$d-2b=(n^2-1)b-na-2b=(n^2-3)b-na>0$. Then the proof follows by
induction.
\end{proof}

\begin{coro}\label{corodim1}
Let $X$ be a quasi-simple module with dimension vector $\udim
X=(a,b)$ and $a\leq b$.  Consider the following short exact sequence
$$0\ra \tau ^{-i}X[j]\stackrel{f}{\ra}\tau^{-i}X[j+1]\ra \tau^{-(i+j)}X\ra 0$$
where $f$ is an irreducible monomorphism and $i\geq 0$, $j\geq 1$.
Then $\udim \tau^{-i}X[j]<\udim \tau^{-(i+j)}X$ and thus
$|\tau^{-i}X[j]|<|\tau^{-(i+j)}X|$.
\end{coro}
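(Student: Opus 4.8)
The plan is to reduce the statement directly to Lemma \ref{dim1}(3). The first step is to record the dimension vector of $\tau^{-i}X[j]$. Reading off the given short exact sequence and using that dimension vectors are additive on short exact sequences, one obtains $\udim\tau^{-i}X[j+1]=\udim\tau^{-i}X[j]+\udim\tau^{-(i+j)}X$. An easy induction on $j$, with base case $\udim\tau^{-i}X[1]=\udim\tau^{-i}X$, then yields the telescoping formula
$$\udim\tau^{-i}X[j]=\sum_{k=0}^{j-1}\udim\tau^{-(i+k)}X.$$

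The second step is to apply Lemma \ref{dim1}(3) to the module $Y:=\tau^{-i}X$ in place of $X$. For this I must check that $Y$ is an indecomposable regular module whose dimension vector $(a',b')$ satisfies $a'\leq b'$. Indecomposability and regularity are clear, since $\tau$ acts as an autoequivalence on the regular part. For the inequality between the coordinates: if $i=0$ then $Y=X$ and $a'\leq b'$ is the hypothesis, while if $i\geq 1$ then Lemma \ref{dim1}(2) even gives the strict inequality $a'<b'$. Hence Lemma \ref{dim1}(3), applied to $Y$ with $r=j-1$, reads
$$\sum_{k=0}^{j-1}\udim\tau^{-k}Y<\udim\tau^{-j}Y.$$
Substituting $\tau^{-k}Y=\tau^{-(i+k)}X$ and $\tau^{-j}Y=\tau^{-(i+j)}X$ and combining with the telescoping formula gives exactly $\udim\tau^{-i}X[j]<\udim\tau^{-(i+j)}X$.

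Finally, since the base field is algebraically closed, the length of a module equals the sum of the two entries of its dimension vector. As the inequality $\udim\tau^{-i}X[j]<\udim\tau^{-(i+j)}X$ is entrywise strict, summing the two coordinates yields $|\tau^{-i}X[j]|<|\tau^{-(i+j)}X|$, completing the argument. I expect no serious obstacle here; the only point requiring care is the verification that the shifted module $Y=\tau^{-i}X$ still satisfies the coordinate hypothesis of Lemma \ref{dim1}, which is precisely what part (2) of that lemma supplies.
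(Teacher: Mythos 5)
Your proof is correct and takes essentially the same route as the paper, whose proof is just the one-line remark that the corollary ``follows directly from the lemma with the assumption that $X$ is quasi-simple'': quasi-simplicity gives the telescoping formula $\udim\tau^{-i}X[j]=\sum_{k=0}^{j-1}\udim\tau^{-(i+k)}X$, and Lemma \ref{dim1}(3) applied to $\tau^{-i}X$ (whose coordinate hypothesis is supplied by Lemma \ref{dim1}(2) when $i\geq 1$) yields the entrywise strict inequality, hence the length inequality. You have merely made explicit the details the paper leaves implicit.
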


\begin{proof}This follows directly from the  lemma with
the assumption that $X$ is quasi-simple.
\end{proof}

Let $\mathcal{B}$ be the set of the isomorphism classes of the
indecomposable regular modules whose GR submodules are
preprojective. Note that $\mathcal{B}$ is not empty since it
contains all indecomposable modules $X$ with dimension vector $\udim
X=(1,1)$. By Proposition \ref{part},  an indecomposable module $X$
is contained in $\mathcal{B}$ if and only if $X$ has no proper
regular submodules. In particular,  $X\in\mathcal{B}$ implies that
$X$ is quasi-simple.

\begin{lemm}\label{dim2} Let $X\in\mathcal{B}$  with dimension vector $\udim
X=(a,b)$. Then $a\leq b$.
\end{lemm}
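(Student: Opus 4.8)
The plan is to work in the representation picture. Write $X$ as $X_2\xrightarrow{\alpha_1,\dots,\alpha_n}X_1$, where, following the dimension-vector convention fixed by $P_2=(1,n)$, the first coordinate records the source vertex $2$; thus $\dim X_2=a$ and $\dim X_1=b$. Since $X$ is regular and indecomposable, $\udim X$ is a positive imaginary root, so in particular $a,b\geq 1$. I would exploit the characterization recorded just before the lemma: $X\in\mathcal{B}$ if and only if $X$ has no proper regular submodule.

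First I would produce a convenient proper submodule. Choosing any hyperplane $V_2\subset X_2$ (possible since $a\geq 1$), the pair $W=(V_2,X_1)$ is a submodule of $X$, because $\alpha_i(V_2)\subseteq X_1$ holds automatically; it is proper since $V_2\subsetneq X_2$, and $\udim W=(a-1,b)$. Next I would argue that $W$ is preprojective. Decompose $W$ into indecomposables; each indecomposable summand $S$ admits a nonzero map $S\ra X$. No such $S$ can be preinjective, since $\Hom(J,Z)=0$ whenever $J$ is preinjective and $Z$ is regular (for a wild hereditary algebra a preinjective module admits no nonzero map to a regular one). And no such $S$ can be regular, for it would be a proper regular submodule of $X$, contradicting $X\in\mathcal{B}$. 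Hence every indecomposable summand of $W$ is preprojective.

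The concluding step is a dimension comparison, and it rests on one auxiliary fact: every indecomposable preprojective module has dimension vector $(p,p')$ with $p<p'$ \emph{strictly}. This holds for $P_1=(0,1)$ and $P_2=(1,n)$, and propagates along the component by applying $\Phi^{-1}$ — a short induction checking that the sink-coordinate stays strictly ahead of the source-coordinate. Summing this strict inequality over the indecomposable summands of $W$, which are nonzero in number since $\dim W_1=b\geq 1$, yields $\dim W_2<\dim W_1$, that is $a-1<b$, whence $a\leq b$. The only genuine obstacle is this auxiliary fact together with the input $\Hom(J,Z)=0$ for $J$ preinjective and $Z$ regular; once these are in place the argument is immediate. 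It also explains why imaginary roots such as $(2,1)$ for $n=3$, which do occur as dimension vectors of regular modules, are excluded from $\mathcal{B}$: such a module must contain a proper regular (e.g.\ $(1,1)$) submodule.
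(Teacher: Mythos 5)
Your proof is correct, but it takes a genuinely different route from the paper's. The paper dismisses $a=1$ as trivial and, for $a\geq 2$, picks an indecomposable regular module $M$ with $\udim M=(1,n-1)$ (which exists because $(1,n-1)$ is an imaginary root): any nonzero map $M\ra X$ would have regular image, hence would be an epimorphism onto $X$ since $X$ has no proper regular submodules, and no epimorphism is possible when $a\geq 2$ by comparing first coordinates; therefore $\Hom(M,X)=0$ and $a-b=\langle(1,n-1),(a,b)\rangle\leq 0$ in one line. You instead build the explicit corank-one submodule $W=(V_2,X_1)$ with $\udim W=(a-1,b)$, exclude preinjective summands (no nonzero maps from preinjectives to regulars) and regular summands (they would be proper regular submodules of $X$), and then sum the strict inequality $p<p'$ over the preprojective summands; the strictness, which holds already for $P_1=(0,1)$ and propagates by the $\Phi^{-1}$-induction you indicate, is exactly what turns $a-1<b$ into $a\leq b$. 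The trade-off is that the paper's argument is a two-line Euler-form computation but leans on Kac's existence theorem for the test module, while yours is more elementary and self-contained and yields a small bonus, namely an explicit preprojective submodule of corank one in every $X\in\mathcal{B}$. Both arguments are complete.
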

\begin{proof} There is nothing to show if $a=1$.  Assume
$a\geq 2$ and let $M$ be an indecomposable regular module with
dimension vector $(1,n-1)$ (note that $M$ exists since $(1,n-1)$ is
an imaginary root). Since there does not exist an epimorphism $M\ra
X$ and $X$ has no proper regular submodules, we have $\Hom(M,X)=0$.
It follows that $\langle(1,n-1),(a,b)\rangle\leq 0$ and thus
$a-b=a+(n-1)b-nb\leq 0$.
\end{proof}

\begin{lemm}\label{B}Let $X\in\mathcal{B}$ and $Y$ be a GR submodule of $X$.
\begin{itemize}
   \item[(1)] For each $i\geq 0$, $\tau^{-i}X\in\mathcal{B}$.
   \item[(2)] There exists an $m\geq 1$ such that
   $\tau^iX\notin\mathcal{B}$ for any $i\geq m$.
   \item[(3)] If the GR factor $X/Y$ is not simple, then
              $X/Y\in\mathcal{B}$.
   \item[(4)] If $M$ is a non-simple indecomposable proper factor module of $X$, then $\mu(M)>\mu(X)$.
\end{itemize}
\end{lemm}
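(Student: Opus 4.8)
I begin with a reformulation that underlies all four parts. For an indecomposable regular module $X$, the condition $X\in\mathcal{B}$ (no proper regular submodule) is equivalent to: every nonzero homomorphism $f\colon R\to X$ with $R$ regular is surjective. Indeed $\im f$, being a quotient of the regular module $R$, has no preprojective summand, and being a submodule of the regular module $X$ it has no preinjective summand (using $\Hom(\text{preinj},\text{reg})=0$); hence $\im f$ is regular, so $\im f=X$ by hypothesis. The same two vanishing facts show that any indecomposable proper factor $M$ of $X$ has no preprojective summand, so $M$ is regular or preinjective. These remarks, with Proposition~\ref{part} and Lemma~\ref{dim1}, are my main tools. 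I will treat \textup{(2)}, \textup{(4)}, \textup{(3)} first and leave \textup{(1)} as the main obstacle.

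For \textup{(2)}, write $\udim\tau^{i}X=(a_i,b_i)=(a,b)\Phi^{i}$; each $\tau^{i}X$ is regular. The Coxeter matrix $\Phi$ has real eigenvalues $\lambda_+>1>\lambda_->0$ with $\lambda_+\lambda_-=1$, and its dominant left eigenvector $v_+$ satisfies $(v_+)_1/(v_+)_2=(\lambda_++1)/n>1$. Since regular lengths grow with $i$, the $v_+$-coordinate of $(a,b)$ is positive, so $(a_i,b_i)/\lambda_+^{\,i}\to c\,v_+$ with $c>0$; hence $a_i>b_i$ for all large $i$. By the contrapositive of Lemma~\ref{dim2} this gives $\tau^{i}X\notin\mathcal{B}$ for all such $i$, which is \textup{(2)}.

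For \textup{(4)}, let $M$ be a non-simple indecomposable proper factor of $X$, so $M$ is regular or preinjective and $|M|<|X|$. If $M$ is preinjective it is a landing module by Proposition~\ref{part}(2), whence $\mu(M)$ exceeds every central measure, in particular $\mu(M)>\mu(X)$. Suppose instead $M$ is regular, so $\mu(M)$ is central, and write $Y\cong P_k$ for the preprojective GR submodule of $X$, so $\mu(X)=I_k\cup\{|X|\}$. Assume for contradiction $\mu(M)\le\mu(X)$. Being central, $\mu(M)>I_k$, so $I_k<\mu(M)\le I_k\cup\{|X|\}$; since $I_k\cup\{|X|\}$ starts with $I_k$, the property that $I<J<I'$ with $I'$ starting with $I$ forces $J$ to start with $I$ shows $\mu(M)$ starts with $I_k$. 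Then every element of $\mu(M)\setminus I_k$ lies in $(|P_k|,|M|]\subseteq(|P_k|,|X|)$; the smallest such element belongs to $\mu(M)\setminus\mu(X)$ and is the least element of the symmetric difference (the only possible element of $\mu(X)\setminus\mu(M)$ being $|X|$), so $\mu(M)>\mu(X)$, a contradiction. Hence $\mu(M)>\mu(X)$ always.

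For \textup{(3)}, let $Z:=X/Y$ be the (indecomposable) GR factor, assumed non-simple. If $R\subsetneq Z$ were a proper regular submodule, the inclusion $R\to Z$ is nonzero and not surjective, so by Lemma~\ref{GR}(2) it factors through $\pi\colon X\to Z$; the lift $R\to X$ is a monomorphism, exhibiting $R$ as a proper regular submodule of $X$ and contradicting $X\in\mathcal{B}$. Thus $Z$ has no proper regular submodule. It remains to see $Z$ is regular: it is not preprojective (a quotient of $X$), and a non-simple preinjective $Z$ would contain a proper regular submodule, which by the same lifting would again contradict $X\in\mathcal{B}$; alternatively one computes $\langle\udim Z,\udim Z\rangle$ from $0\to Y\to X\to Z\to 0$ using $\Hom(X,Y)=\Ext^1(Y,X)=0$, $\langle\udim X,\udim X\rangle<0$ and $\langle\udim Y,\udim Y\rangle=1$ to conclude $\udim Z$ is imaginary. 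Once $Z$ is regular with no proper regular submodule, $Z\in\mathcal{B}$.

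Finally \textup{(1)}, which I expect to be the hard part. Here I again use the functorial isomorphism $\Hom(R,\tau^{-1}X)\cong\Hom(\tau R,X)$ for regular modules. If $\tau^{-1}X\notin\mathcal{B}$, a quasi-simple regular submodule gives a monomorphism $R\hookrightarrow\tau^{-1}X$ with $R\not\cong\tau^{-1}X$; the isomorphism sends it to a nonzero map $\tau R\to X$, which is surjective by the characterization of $X\in\mathcal{B}$. If it were an isomorphism then $R\cong\tau^{-1}X$, contradicting properness, so one obtains a short exact sequence $0\to K\to\tau R\to X\to 0$ with $K\ne 0$; since $\tau R\to X$ is the image under the autoequivalence $\tau$ of a monomorphism, it is a monomorphism in the additive category of regular modules, which forces $K$ to have no regular summand and hence to be preprojective. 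The remaining task is to derive a contradiction from the existence of a quasi-simple regular module $\tau R$ carrying a nonzero preprojective submodule $K$ with regular quotient $X\in\mathcal{B}$, which I would attempt by pitting the growth estimates of Lemma~\ref{dim1} and Corollary~\ref{corodim1} against the componentwise inequalities coming from $R\hookrightarrow\tau^{-1}X$ and $X=\tau R/K$. The genuine difficulty is exactly this transfer: because $Q$ is wild the category of regular modules is not abelian and $\tau$ does not preserve monomorphisms of modules, so one cannot simply apply $\tau$ to $R\hookrightarrow\tau^{-1}X$; controlling the degeneration of the cokernel under $\tau$, combining quasi-simplicity of $X$ with the dimension growth, is the crux of the argument.
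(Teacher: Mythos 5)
Parts (2), (3) and (4) of your argument are essentially sound. In (4) you reprove by hand the combinatorial fact that the paper extracts from Lemma \ref{GR}(4); in (2) you replace the paper's argument (which normalizes $\udim X$ to be of minimal length in its $\tau$-orbit, uses $\Hom(X,\tau X)\cong D\Ext^1(X,X)\neq 0$ to produce an epimorphism $X\ra\tau X$ if $\tau X$ had no proper regular submodule, and then invokes part (1)) by a spectral estimate on the Coxeter matrix combined with the contrapositive of Lemma \ref{dim2}. Both work, and yours has the advantage of not depending on part (1). Part (3) follows the paper's route; note only that your ``alternative'' Euler-form computation there does not actually close, since $\Ext^1(X,Y)\cong D\Hom(Y,\tau X)$ need not vanish for $Y$ preprojective and $X$ regular, so the sign of $\langle\udim Z,\udim Z\rangle$ is not controlled — but your primary argument for (3) is the paper's and is fine.

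The genuine gap is in (1), which you explicitly leave unfinished, and the obstacle you identify is not real. For a hereditary algebra $\tau\cong D\Ext^1(-,\Lambda)$, and applying $\Hom(-,\Lambda)$ to a short exact sequence $0\ra A\ra B\ra C\ra 0$ gives, using $\Ext^2=0$, an exact sequence $\Ext^1(C,\Lambda)\ra\Ext^1(B,\Lambda)\ra\Ext^1(A,\Lambda)\ra 0$; dualizing shows $0\ra\tau A\ra\tau B\ra\tau C$ is exact. So $\tau$ \emph{does} preserve monomorphisms between modules without projective direct summands, contrary to your closing remark. This is precisely the paper's one-line proof of (1): a proper regular submodule $M\subset\tau^{-i}X$ yields a monomorphism $\tau^iM\ra X$ with $\tau^iM$ regular and $\tau^iM\not\cong X$ (compare lengths), contradicting $X\in\mathcal{B}$. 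With this fact your own setup also closes immediately: the map $\tau R\ra X$ you construct is simultaneously injective (left exactness of $\tau$) and surjective (your characterization of $\mathcal{B}$), forcing $R\cong\tau^{-1}X$, a contradiction; there is no preprojective kernel $K$ to analyze and no dimension-growth argument is needed.
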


\begin{proof}

(1) Since a proper inclusion $M\subset\tau^{-i}X$ with $M$ a regular
module induces a proper regular submodule $\tau^iM$ of $X$, $\tau^{-i}X$ has
no proper regular submodules and thus $\tau^{-i}X\in\mathcal{B}$ for
all $i\geq 0$.

(2) Without loss of generality, we may assume that $\udim X$ is
minimal in the $\tau$-orbit of $X$. Using the Auslander-Reiten
formula we have $\Hom(X,\tau X)\cong \mathbb{D}\Ext^1(X,X)\neq 0$.
If $\tau X$ has no proper regular submodules, then there is an
epimorphism $X\ra \tau X$, which contradicts the minimality of
$\udim X$. Therefore, $\tau X\notin\mathcal{B}$ and thus
$\tau^iX\notin\mathcal{B}$ for any $i\geq 1$ by (1).

(3) Assume that $X/Y$ is not simple and $N$ is a GR submodule of
$X/Y$. Then the inclusion $N\ra X/Y$ factors through $X$ and thus
$N$ is isomorphic to a proper submodule of $X$. Note that $N$ is
preprojective since $X\in\mathcal{B}$.  On the other hand,  a GR
submodule of a non-simple preinjective module is always a regular
one. Therefore, $X/Y$ is regular.

(4) We may assume that $M$ is not preinjective by the description of the landing part.
If $\mu(M)<\mu(X)$, then   $\mu(P_r)<\mu(M)<\mu(X)$, where $P_r$ is a GR submodule of $X$,
since $M$ is regular. It follows that $|M|>|X|$ by Lemma \ref{GR}(4), which is a contradiction.
\end{proof}

Let $X\in\mathcal{B}$ and $i\geq 1$. Then $\tau^{-i}X\in\mathcal{B}$
by above lemma.  We are able to determine the GR submodules of
$\tau^{-i}X$.

\begin{lemm}\label{sub}Let $X\in\mathcal{B}$ and  $P_r$ a GR submodule of
$X$.
\begin{itemize}
    \item[(1)] If $X/P_r$ is regular, then $\tau^{-i}{P_r}$ is, up to isomorphism, the unique  GR
                 submodule of $\tau^{-i}X$ for each $i\geq 0$.
    \item[(2)] If $X/P_r$ is simple, then $\tau^{-(i-1)}P_{r+1}$ is, up to isomorphism, the unique GR submodule
                 of $\tau^{-i}X$ for each $i\geq 1$.
    \item[(3)] For all $0\leq i<j$, $\mu(\tau^{-i}X)>\mu(\tau^{-j}X)$.
\end{itemize}
\end{lemm}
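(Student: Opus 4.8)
The plan is to reduce all three parts to a single length computation. Since $\tau^{-i}X\in\mathcal{B}$ by Lemma \ref{B}(1), every proper indecomposable submodule of $\tau^{-i}X$ is preprojective, hence isomorphic to some $P_t$ with $\mu(P_t)=I_t$; because $I_1<I_2<\cdots$ and $\mathcal{A}(I_t)=\{P_t\}$ (Proposition \ref{part}), the GR submodule of $\tau^{-i}X$ is, up to isomorphism, the unique preprojective $P_s$ of maximal index admitting a monomorphism $P_s\ra\tau^{-i}X$. This already yields the uniqueness assertions in (1) and (2). Moreover $P_s\subset\tau^{-i}X$ forces $|P_s|<|\tau^{-i}X|$, while Lemma \ref{GR}(3) gives an epimorphism $P_{s+1}\ra\tau^{-i}X$, so $|\tau^{-i}X|<|P_{s+1}|$ (strict, since $\tau^{-i}X$ is regular). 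As the lengths $|P_1|<|P_2|<\cdots$ are strictly increasing, $s$ is the \emph{unique} index with $|P_s|<|\tau^{-i}X|<|P_{s+1}|$; it therefore suffices to locate $|\tau^{-i}X|$ in this bracket.

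For (1) I would first record two standard facts about the hereditary algebra $kQ$: over a hereditary algebra $\tau$ preserves monomorphisms and $\tau^{-1}$ preserves epimorphisms (read off from the long exact $\Ext^{*}(-,\Lambda)$-sequence), and $\nu^{-1}(-)=\Hom(D\Lambda,-)$ vanishes on preprojective and on regular modules (since $D\Lambda$ is preinjective). Applying $\tau^{-i}$ to the GR inclusion $P_r\subset X$: because $P_r$ is preprojective and $X,\,X/P_r$ are regular, all inverse-Nakayama terms in the $\tau^{-1}$-sequence vanish, so the sequence stays short exact and produces a monomorphism $\tau^{-i}P_r=P_{r+2i}\ra\tau^{-i}X$; hence $s\geq r+2i$. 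Conversely, a monomorphism $P_{r+2i+1}\ra\tau^{-i}X$ would, after applying $\tau^{i}$ (which preserves monomorphisms and sends $P_{r+2i+1}$ to $P_{r+1}$ and $\tau^{-i}X$ to $X$), give a monomorphism $P_{r+1}\ra X$, contradicting that $P_r$ is the GR submodule of $X$. Thus $s=r+2i$ and $\tau^{-i}P_r$ is the unique GR submodule.

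For (2) the cokernel $X/P_r$ must be the simple injective $Q_0=(1,0)$: it cannot be $(0,1)=P_1$, for then a projective direct summand would split off the indecomposable regular module $X$. Here the $\tau^{-1}$-sequence degenerates, using $\tau^{-1}Q_0=0$ and $\nu^{-1}Q_0=P_2$, to $0\ra P_2\ra P_{r+2}\ra\tau^{-1}X\ra 0$, so that $\udim\tau^{-1}X=\udim P_{r+2}-\udim P_2$ and $|\tau^{-1}X|=|P_{r+2}|-|P_2|$. Maximality gives $s\leq r+1$, since a monomorphism $P_{r+2}\ra\tau^{-1}X$ is impossible by length; combined with the estimate $|P_{r+2}|-|P_{r+1}|>|P_2|=n+1$ (so that $|\tau^{-1}X|>|P_{r+1}|$) the bracket forces $s=r+1$. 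The same estimate shows $|\tau^{-1}X/P_{r+1}|>1$, so by Lemma \ref{B}(3) the GR factor of $\tau^{-1}X$ is regular and $\tau^{-1}X$ falls under case (1). Applying (1) to $\tau^{-1}X$ (which has GR submodule $P_{r+1}$) then gives, for all $i\geq 1$, that $\tau^{-(i-1)}P_{r+1}=P_{r+2i-1}$ is the unique GR submodule of $\tau^{-i}X=\tau^{-(i-1)}(\tau^{-1}X)$.

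Finally, (3) follows formally from (1) and (2). Writing $s(i)$ for the GR-submodule index (strictly increasing in $i$ by (1),(2)) and $m_i=|\tau^{-i}X|$, we have $\mu(\tau^{-i}X)=I_{s(i)}\cup\{m_i\}$ with $|P_{s(i)}|<m_i<|P_{s(i)+1}|$; in particular $m_i$ equals no $|P_t|$, and the $m_i$ are strictly increasing by Lemma \ref{dim1}(1). For $0\leq i<j$ both measures start with $I_{s(i)}$, and the smallest element of their symmetric difference is $m_i$, lying in $\mu(\tau^{-i}X)$; hence $\mu(\tau^{-i}X)>\mu(\tau^{-j}X)$. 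The one genuinely computational point, and the main obstacle, is the preprojective length estimate $|P_{r+2}|-|P_{r+1}|>n+1$ used in case (2): it rests on the rapid growth of the $|P_t|$ for $n\geq 3$ and can be obtained from Lemma \ref{dim1} once the first gap $|P_3|-|P_2|=n^2-2$ is computed, together with the monotonicity of the successive gaps.
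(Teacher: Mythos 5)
Your proof is correct and follows essentially the same route as the paper: identify the GR submodule of $\tau^{-i}X$ as the preprojective $P_s$ of maximal index embedding into it, transport embeddings and non-embeddings with $\tau^{\pm1}$ to settle the regular-factor case, and in the simple-factor case pin down $s=r+1$ by a length/dimension-vector comparison (your Nakayama six-term sequence computes exactly the dimension vector the paper obtains from the Coxeter matrix) before observing that the new GR factor is non-simple and reducing to case (1). Your explicit bracket argument $|P_s|<|\tau^{-i}X|<|P_{s+1}|$ and the spelled-out order comparison in (3) are welcome elaborations of steps the paper leaves implicit.
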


\begin{proof}(1) If $X/P_r$ is regular, then the GR inclusion
induces a monomorphism $P_{r+2}=\tau^{-1}P_r\ra \tau^{-1} X$ with a
regular factor. If there is a monomorphism $P_{r+3}\ra \tau^{-1}X$,
then there is a monomorphism $P_{r+1}=\tau P_{r+3} \ra X$. This
contradicts $P_r$ is a GR submodule of $X$. Thus $\tau^{-1}P_r$ is a
GR submodule of $\tau^{-1} X$. Since the factor is regular, we have
$\tau^{-i}P_r$ is a GR submodule of $\tau^{-i}X$ for all $i\geq 1$ by induction.

(2) Assume that $X/P_r$ is simple. Let $\udim P_r=(a,b)$. Then
$\udim X=(a+1,b)$. It follows that $\udim \tau^{-1}X=(nb-a-1,
(n^2-1)b-n(a+1))$, $\udim \tau^{-1}P_r=(nb-a, (n^2-1)b-na)$ and
$\udim P_{r+1}=(b,nb-a)$. Comparing the dimension vectors, we know
that $P_{r+1}$ is a GR submodule of $\tau^{-1}X$ (using Lemma
\ref{GR}(3)). Note that $(nb-a-1)-b=(n-1)b-a-1>1$. Thus the GR
factor $\tau^{-i}X/P_{r+1}$ is not simple. It follows from (1) that
$\tau^{-(i-1)}P_{r+1}$ is a GR submodule of $\tau^{-i}X$.

(3) This is straightforward by (1) and (2).
\end{proof}
As a consequence of the last statement of this lemma, we have:

\begin{coro} There does not exist a minimal central measure.
\end{coro}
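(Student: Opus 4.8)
The plan is to reduce the statement to a purely order-theoretic claim about the central measures. By Proposition \ref{part}(3) the central measures are exactly the Gabriel-Roiter measures of the indecomposable regular modules, so ``there is no minimal central measure'' means precisely that no indecomposable regular module has minimal GR measure among all such modules. I would therefore fix an arbitrary central measure $I$, write $I=\mu(M)$ with $M$ indecomposable regular, and exhibit an indecomposable regular module whose GR measure is strictly smaller than $I$. Producing such a witness for every $M$ shows that $I$ can never be minimal.

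I would split into two cases according to whether $M\in\mathcal{B}$. If $M\in\mathcal{B}$, then $\tau^{-1}M\in\mathcal{B}$ by Lemma \ref{B}(1); in particular $\tau^{-1}M$ is again regular, hence central. Applying Lemma \ref{sub}(3) with $i=0<j=1$ gives $\mu(\tau^{-1}M)<\mu(M)=I$, so $\tau^{-1}M$ witnesses that $I$ is not minimal. This is the case that invokes the last statement of Lemma \ref{sub}, as the corollary announces.

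If $M\notin\mathcal{B}$, then by the very definition of $\mathcal{B}$ the GR submodule $Y$ of $M$ is not preprojective. Since $M$ is regular and there are no nonzero homomorphisms from a preinjective module to a regular module, $Y$ cannot be preinjective either; hence $Y$ is regular, and so $\mu(Y)$ is central. From the GR inclusion one has $\mu(M)=\mu(Y)\cup\{|M|\}$, and since $|M|$ exceeds every element of $\mu(Y)$ this forces $\mu(Y)<\mu(M)=I$. Thus $I$ is again not minimal. Combining the two cases, every central measure has a strictly smaller central measure, so no minimal central measure exists.

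The only point requiring care is confirming that the witnessing module is genuinely regular, so that its GR measure is central rather than take-off or landing. In the case $M\in\mathcal{B}$ this is automatic from Lemma \ref{B}(1), but in the case $M\notin\mathcal{B}$ one must rule out a preinjective GR submodule, which is exactly where the Hom-vanishing between the three parts of the module category is used. I expect this second case to be the main step to get right, since there one must locate a suitable regular submodule rather than simply applying $\tau^{-1}$.
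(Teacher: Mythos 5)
Your proof is correct and follows essentially the same route as the paper: the paper argues by contradiction, noting that minimality would force the GR submodule of $M$ to be preprojective (since a regular GR submodule already yields a smaller central measure) and then invoking Lemma \ref{sub}(3) to get $\mu(\tau^{-1}M)<\mu(M)$, which is exactly your two-case analysis written directly instead of contrapositively. Your explicit verification that the GR submodule cannot be preinjective is a detail the paper leaves implicit, but it is the same argument.
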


\begin{proof} For the purpose of a contradiction, we assume that
$M$ is an indecomposable module such that $\mu(M)$ is the minimal
central GR measure.  It follows that $M$ is regular by the
description of the partition and a GR submodule $N$ of $M$ is
preprojective by the minimality of $\mu(M)$. This implies that
$\mu(\tau^{-i}M)<\mu(M)$ for each $i\geq 1$, which is a
contradiction.
\end{proof}

\begin{remark} Note that for a tame quiver, the minimal central measure always
exists \cite{Ch3,Ch4}. However, it does not mean that any wild
quiver has no minimal central measure. For example, let $Q'$ be the
wild quiver with three vertices, labeled by $1,2,3$, and one arrow
from $1$ to $2$ and two arrows from $2$ to $3$.  Then the GR measure
of the indecomposable projective module $P_1$ is
$\mu(P_1)=\{1,3,4\}$, which is the minimal central measure
\cite{Ch6}.
\end{remark}

Let $X\in\mathcal{B}$. Lemma \ref{dim1} and Corollary \ref{corodim1}
give some combinatorial descriptions of the dimension vectors of
$\tau^{-i}X$ for $i\geq 0$. We will use these to study the GR
submodules of $\tau^{-i}X[j]$ for all $i\geq 0$ and $j\geq 2$. We
first recall what a piling submodule is \cite{R5}.

\begin{defi} Let $\Lambda$ be an artin algebra  and $M$ be an indecomposable $\Lambda$-module. Then an
indecomposable submodule $X$ of $M$ is called a piling submodule if
$\mu(X)\geq \mu(Y)$ for all submodules $Y$ of $M$ with $|Y|\leq
|X|$.
\end{defi}
\begin{lemm}[\cite{R5}]\label{piling} 
Let $\Lambda$ be an artin algebra and $M$ be an indecomposable $\Lambda$-module.
Let $X$ be an indecomposable submodule of $M$.
Then $X$ is a piling submodule of $M$ if and only if $\mu(M)$
starts with $\mu(X)$ $($meaning that
$\mu(X)=\mu(M)\cap\{1,2,3,\ldots,|X|\})$.
\end{lemm}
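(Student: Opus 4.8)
The plan is to reduce the whole equivalence to the combinatorics of the total order on $\mathcal{P}(\mathbb{N})$, using two elementary monotonicity facts about $\mu$. First, for any submodule $Y\subseteq M$ one has $\mu(Y)\le\mu(M)$, because every chain of indecomposable submodules of $Y$ is also such a chain in $M$, so the maximum defining $\mu(M)$ ranges over a larger family. Second, $\mu(Y)\subseteq\{1,\ldots,|Y|\}$, since every term of a chain in $Y$ has length at most $|Y|$; writing $m=|X|$, every $Y$ with $|Y|\le m$ thus has $\mu(Y)\subseteq\{1,\ldots,m\}$. Alongside these I would isolate two purely order-theoretic lemmas: (i) if $A\subseteq\{1,\ldots,m\}$ and $A\le B$, then $A\le B\cap\{1,\ldots,m\}$, checked by looking at the smallest element $d$ of $A\triangle B$ and distinguishing $d\le m$ from $d>m$ (in the latter case $A=B\cap\{1,\ldots,m\}$); and (ii) if $A>B$ with $\max(A\cup B)\le m<\min C$, then $A\cup C>B\cup C$, since $A\triangle B$ and $(A\cup C)\triangle(B\cup C)$ coincide and lie entirely below $\min C$.

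The key structural step is the following. If $M_1\subset M_2\subset\cdots\subset M_t=M$ is a chain realizing $\mu(M)$ (which may be taken to end at $M$, as $\max\mu(M)=|M|$), then $\mu(M_i)=\{|M_1|,\ldots,|M_i|\}$ for each $i$, i.e.\ every initial segment is optimal for $M_i$. Indeed, if some $\mu(M_i)$ were strictly larger than $\{|M_1|,\ldots,|M_i|\}$, I would splice an optimal chain inside $M_i$ onto $M_i\subset M_{i+1}\subset\cdots\subset M$; by lemma (ii), with $C=\{|M_{i+1}|,\ldots,|M_t|\}$, the resulting chain would realize a set strictly larger than $\mu(M)$, a contradiction.

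With these in hand the two directions are short. For the direction ``piling $\Rightarrow$ starts with'', let $M_s$ be the largest term of the optimal chain with $|M_s|\le m$, so that $\mu(M)\cap\{1,\ldots,m\}=\{|M_1|,\ldots,|M_s|\}=\mu(M_s)$ by the structural step. The piling hypothesis applied to the indecomposable submodule $M_s$ gives $\mu(M_s)\le\mu(X)$, while $\mu(X)\le\mu(M)$ combined with lemma (i) (using $\mu(X)\subseteq\{1,\ldots,m\}$) gives $\mu(X)\le\mu(M)\cap\{1,\ldots,m\}=\mu(M_s)$; hence $\mu(X)=\mu(M)\cap\{1,\ldots,|X|\}$. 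Conversely, assuming $\mu(X)=\mu(M)\cap\{1,\ldots,m\}$, take any $Y\subseteq M$ with $|Y|\le m$; then $\mu(Y)\le\mu(M)$ and $\mu(Y)\subseteq\{1,\ldots,m\}$, so lemma (i) yields $\mu(Y)\le\mu(M)\cap\{1,\ldots,m\}=\mu(X)$, which is exactly the piling condition.

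The main obstacle is the structural step: verifying that initial segments of an optimal chain are themselves optimal. Everything else is bookkeeping with the total order, though that bookkeeping must be done carefully, since the interaction of ``$\le$'' with truncation to an initial interval $\{1,\ldots,m\}$ is precisely what lemma (i) encodes. I would finally remark that the intersection formula $\mu(X)=\mu(M)\cap\{1,\ldots,|X|\}$ is genuinely equivalent to ``$\mu(M)$ starts with $\mu(X)$'' in the sense $\mu(X)\ll\mu(M)$, this last equivalence being immediate from $\max\mu(X)=|X|$.
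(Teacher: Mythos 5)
The paper itself gives no proof of this lemma: it is stated as a quoted result from \cite{R5}, so there is no in-paper argument to measure yours against. Taken on its own terms, your proof is correct. The two monotonicity facts, the two order-theoretic lemmas (truncation to $\{1,\dots,m\}$ respects $\le$ for sets contained in $\{1,\dots,m\}$, and adjoining a common tail $C$ lying above $m$ preserves strict inequality), and the structural step that every initial segment of an optimal chain is itself optimal are exactly the right ingredients, and the splicing argument for the structural step is sound (note that since $M_i$ is indecomposable you may, as you do for $M$, take the optimal chain in $M_i$ to end at $M_i$ itself, so the splice really is a chain; in any case $|M_i|\in\mu(M_i)$, so no spurious term is introduced). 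The one point you should make explicit is the existence of the term $M_s$: you need some member of the optimal chain to have length at most $|X|$, i.e. $\min\mu(M)\le |X|$. This does hold --- otherwise the one-term chain $X$ would give $\{|X|\}>\mu(M)$, since the smallest element of the symmetric difference would be $|X|$ itself, contradicting the maximality defining $\mu(M)$ --- but as written it is silently assumed. With that line added the argument is complete, and your closing remark correctly identifies the intersection formula with the relation $\mu(X)\ll\mu(M)$ (or equality) via $\max\mu(X)=|X|$.
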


The following result is crucial when calculating the GR submodules
of $\tau^{-i}X[j]$ for $X\in\mathcal{B}$ over $n$-Kronecker quivers.

\begin{prop}\label{keyprop} Let $0\ra X\stackrel{f}{\ra} Y\stackrel{\pi}{\ra} Z\ra 0$ be an short exact sequence
of indecomposable regular modules such that
\begin{itemize}
   \item[(1)] $f$ is an irreducible monomorphism,
   \item[(2)] $Z$ contains a preprojective module as a GR submodule,
   \item[(3)] $|X|<|Z|$.
\end{itemize}
Then $f$ is a GR inclusion. Moreover, $X$ is, up to isomorphism, the
unique GR submodule of $Y$.
\end{prop}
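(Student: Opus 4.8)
The plan is to prove that $X \subset Y$ is a GR inclusion, which by definition means showing every proper submodule $W$ of $Y$ satisfies $\mu(W) \leq \mu(X)$. The natural strategy is to classify proper submodules of $Y$ by how they interact with the submodule $X$, using the short exact sequence $0 \to X \to Y \to Z \to 0$. First I would observe that since $f$ is irreducible, $X$ is quasi-simple and $Y = X[2]$, so $X$ is the unique quasi-simple submodule sitting at the bottom of $Y$. The key dichotomy for a proper indecomposable submodule $W \subset Y$ is whether the composite $W \to Y \xrightarrow{\pi} Z$ is zero or not: if it is zero, then $W$ factors through $X$, forcing $W \subseteq X$ and hence $\mu(W) \leq \mu(X)$ immediately. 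So the entire difficulty is concentrated in the case where $W \to Z$ is nonzero.

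When $\pi|_W \neq 0$, I would argue that $W$ must be regular and analyze the induced map to $Z$. Here I expect to use the hypotheses crucially: condition (2), that $Z$ has a preprojective GR submodule, together with Lemma \ref{B}(4), which says any non-simple indecomposable proper factor of $Z$ has strictly larger GR measure than $Z$ itself. The goal is to bound $\mu(W)$ by $\mu(X)$. The plan is to show that such a $W$ either maps onto $Z$ (is an epimorphism, contradicting $W$ being a proper submodule of $Y$ via length count using condition (3) and Corollary \ref{corodim1}, giving $|W| \geq |Z| > |X|$, and $|W| < |Y| = |X| + |Z|$), or factors in a controlled way. The length inequality $|X| < |Z|$ from condition (3) is what forces a proper submodule mapping nontrivially to $Z$ to still be ``small'' relative to the full piling structure; I would invoke Lemma \ref{piling} to translate the GR inclusion statement into showing $X$ is a piling submodule of $Y$, i.e. that $\mu(Y)$ starts with $\mu(X)$, which may be the cleaner formulation to target.

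The main obstacle, as I see it, is handling submodules $W$ with $\pi|_W \neq 0$ that are \emph{not} all of $Y$ and do not surject onto $Z$: such $W$ fit into a short exact sequence $0 \to W \cap X \to W \to \pi(W) \to 0$ with $\pi(W) \subseteq Z$ a proper submodule. I would need to bound $\mu(W)$ using the GR measures of $W \cap X$ (a submodule of $X$, so $\leq \mu(X)$) and of $\pi(W)$. The subtle point is that $\mu(\pi(W))$ need not be small a priori, so I expect to need the regularity and the preprojective-GR-submodule hypothesis on $Z$ to control the submodules of $Z$ via the partition in Proposition \ref{part}: a proper regular submodule of $Z$ is impossible when $Z \in \mathcal{B}$, so $\pi(W)$ is forced to be preprojective, hence $\mu(\pi(W)) \leq \mu(P_r) < \mu(X)$ since $X$ already contains the preprojective $P_r$ plus more. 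Combining these through Proposition \ref{GA} applied to the monomorphism $W \to X \oplus Z$ (or directly to the sequence) should yield $\mu(W) \leq \max\{\mu(W\cap X), \mu(\pi(W))\} \leq \mu(X)$.

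Finally, for the uniqueness assertion, once $X \subset Y$ is established as a GR inclusion with $\mu(Y) = \mu(X) \cup \{|Y|\}$, I would show any GR submodule $X'$ of $Y$ must have $\mu(X') = \mu(X)$ and the same length, then use that the irreducible monomorphism $f: X \to Y$ is, by Lemma \ref{GR}(1) combined with the fact that all irreducible maps into $Y = X[2]$ in the regular component are monomorphisms, the essentially unique source; any other GR submodule of the same length would have to coincide with $X$ up to isomorphism. The cleanest route is probably to note that a GR submodule realizes the maximal proper $\mu$-value, and since $X$ already achieves $\mu(X)$ with $|X|$ minimal among such, uniqueness follows from the structure of $Y$ as having $X$ as its unique quasi-simple submodule.
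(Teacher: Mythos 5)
Your reduction to the dichotomy ``$\pi|_W=0$ versus $\pi|_W\neq 0$'' matches the paper's starting point, but the way you dispose of the second case contains the real gap. First, the subcase you single out as the main obstacle ($0\neq\pi(W)\subsetneq Z$ with $W$ regular) cannot occur: $\pi(W)$ is a quotient of the regular module $W$ and a submodule of the regular module $Z$, hence regular, and hypothesis (2) says exactly that $Z$ has no proper regular submodules; so for regular $W$ either $\pi(W)=0$ or $\pi(W)=Z$. (If $W$ is preprojective, then $\mu(W)<\mu(X)$ outright by the take-off/central partition, so that case is harmless.) Second, and more seriously, the subcase you dismiss as contradictory is not a contradiction: a proper indecomposable regular submodule $W$ with $\pi(W)=Z$ has $|Z|<|W|<|Y|$ because $0\neq W\cap X\subsetneq X$, and no length count rules such a $W$ out. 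These intermediate submodules are precisely the hard case, and your proposed tool for them fails: Proposition \ref{GA} requires a monomorphism into a direct sum, not an extension $0\to W\cap X\to W\to\pi(W)\to 0$ (GR measure is not subadditive over extensions), and in the relevant situation $\pi(W)=Z$, whose measure need not be at most $\mu(X)$. A further slip: $X$ need not be quasi-simple here ($Y=X'[j+1]$ with $X=X'[j]$; it is $Z\cong\tau^{-j}\qs(X)$ that is quasi-simple), so the ``unique quasi-simple submodule'' picture you lean on for uniqueness is not available.

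The missing idea is to prove something weaker than ``$\mu(W)\leq\mu(X)$ for every proper $W$'' directly. From the dichotomy one gets: every indecomposable submodule $V$ of $Y$ with $|V|\leq|X|$ is either preprojective (so $\mu(V)<\mu(X)$) or, using $|X|<|Z|$, isomorphic to a submodule of $X$ (so $\mu(V)\leq\mu(X)$). Hence $X$ is a piling submodule of $Y$ and $\mu(Y)$ starts with $\mu(X)$ by Lemma \ref{piling}. Now suppose $U$ is a GR submodule of $Y$ with $U\ncong X$. Then $U$ is regular with $|U|>|Z|$, and since $\mu(X)<\mu(U)<\mu(Y)$, the measure $\mu(U)$ also starts with $\mu(X)$; a GR filtration of $U$ therefore contains a term of length $|X|$, which by the classification of small submodules must be isomorphic to $X$. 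This produces a monomorphism $X\to Y$ factoring through the proper indecomposable submodule $U$, which is incompatible with $f$ being irreducible (it would force $U$ to be decomposable). That contradiction, not a length estimate, is what simultaneously yields the GR inclusion and the uniqueness of the GR submodule.
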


\begin{proof}Let $U\stackrel{g}{\ra} Y$ be an indecomposable regular
submodule. If the composition $\pi g$ is zero, then the inclusion
$g$ factors through $f$ and thus $U$ is isomorphic to a submodule of
$X$. If $\pi g$ is not zero, then  it is an epimorphism since $Z$
contains no proper regular submodules. In particular, $|U|>|Z|$.
Therefore, an indecomposable  proper regular submodule of $Y$ is
either isomorphic to a submodule of $X$, or with length greater than
$|Z|$. Let $V$ be an indecomposable submodule of $M$ such that
$|V|\leq |X|$. If $V$ is regular, then $V$ is isomorphic to a
submodule of $Y$ by above discussion since $|V|\leq |X|<|Z|$. If
$V$ is preprojective, then $\mu(V)<\mu(X)$.  It follows that $X$ is
a piling submodule of $Y$ and thus $\mu(Y)$ starts with $\mu(X)$ by
Lemma \ref{piling}. Let $U$ be a GR submodule of $Y$. Then $U$ is a
regular module. For the purpose of a contradiction, we assume that
$U\ncong X$. Then by above discussion, $|U|>|Z|\geq |X|$. Let
$U_1\subset U_2\subset\ldots\subset U_r=U$ be a GR filtration of
$U$. Since $\mu(X)<\mu(U)<\mu(Y)$, we have $\mu(U)$ starts with
$\mu(X)$. Therefore, there is an $U_i$ such that $|U_i|=|X|$, and
thus $U_i\cong X$.  However, $X\stackrel{f}{\ra}Y$ is an irreducible
monomorphism implies $U$ is decomposable.  This contradiction shows
$X$ is the unique, up to isomorphism, GR submodule of $Y$.
\end{proof}

The following theorem is a direct consequence of Lemma \ref{dim2},
Corollary \ref{corodim1} and Proposition \ref{keyprop}.

\begin{theo}\label{thm1}Let $X\in\mathcal{B}$. Then for each $i\geq 0$ and each $j\geq
1$, the irreducible monomorphism $\tau^{-i}X[j]\ra\tau^{-i}X[j+1]$
is a GR inclusion. Moreover, up to isomorphism, $\tau^{-i}X[j]$ is
the unique GR submodule of $\tau^{-i}X[j+1]$.
\end{theo}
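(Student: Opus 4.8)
The plan is to exhibit the irreducible monomorphism $\tau^{-i}X[j]\ra\tau^{-i}X[j+1]$ as the inclusion map $f$ in a short exact sequence to which Proposition \ref{keyprop} applies verbatim, so that the entire statement drops out at once. The candidate sequence is exactly the one recorded in Corollary \ref{corodim1},
$$0\ra \tau^{-i}X[j]\stackrel{f}{\ra}\tau^{-i}X[j+1]\ra \tau^{-(i+j)}X\ra 0,$$
whose three terms are indecomposable regular modules and whose left map is an irreducible monomorphism. To be entitled to cite that corollary I first must check its running hypotheses: that $X$ is quasi-simple, which is automatic for $X\in\mathcal{B}$, and that $\udim X=(a,b)$ satisfies $a\leq b$, which is precisely Lemma \ref{dim2}. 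With these in place Corollary \ref{corodim1} supplies both the sequence above and the length inequality $|\tau^{-i}X[j]|<|\tau^{-(i+j)}X|$.

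Next I would verify the three hypotheses of Proposition \ref{keyprop} for this sequence. Condition (1), that $f$ is an irreducible monomorphism, is part of the sequence itself. For condition (2), I note that $\tau^{-(i+j)}X\in\mathcal{B}$ by Lemma \ref{B}(1), so by the defining property of $\mathcal{B}$ its GR submodule is preprojective, i.e. $\tau^{-(i+j)}X$ contains a preprojective module as a GR submodule. For condition (3), the required inequality $|\tau^{-i}X[j]|<|\tau^{-(i+j)}X|$ is the conclusion of Corollary \ref{corodim1} already obtained above. Thus all three hypotheses hold.

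With the hypotheses established, Proposition \ref{keyprop} yields directly that $f$ is a GR inclusion and that $\tau^{-i}X[j]$ is, up to isomorphism, the unique GR submodule of $\tau^{-i}X[j+1]$, which is the assertion. The only step that carries real weight is the verification of condition (3): it is essential that the quasi-simple module $X$ satisfy $a\leq b$, for this is what forces the $\tau^{-1}$-orbit to grow in both coordinates (Lemma \ref{dim1}) and hence guarantees that the regular quotient $\tau^{-(i+j)}X$ dominates $\tau^{-i}X[j]$ in length; without it the piling argument inside Proposition \ref{keyprop} would have no foothold. Everything else is a matter of assembling the cited lemmas in the right order.
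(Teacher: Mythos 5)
Your proof is correct and is precisely the argument the paper intends: the paper states the theorem is ``a direct consequence of Lemma \ref{dim2}, Corollary \ref{corodim1} and Proposition \ref{keyprop},'' and you have simply spelled out the verification of the three hypotheses of Proposition \ref{keyprop} (including the correct appeal to Lemma \ref{B}(1) for hypothesis (2)) that the paper leaves implicit. No gaps.
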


Before ending  this section, we give a description of the dimension
vectors of indecomposable regular modules with the same lengths and
trivial Hom-spaces.

\begin{lemm}\label{hom} Let $X$, $Y$ be indecomposable regular modules with
dimension vectors $(a,b)$ and $(r,s)$, respectively.  Assume that
$|X|=|Y|$, i.e., $a+b=r+s$, and $\Hom(X,Y)=0$. Then $s\geq
b+\frac{q((a,b))}{(n+1)a-b}$.
\end{lemm}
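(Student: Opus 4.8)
The plan is to translate the hypothesis $\Hom(X,Y)=0$ into an inequality for the Euler form and then solve it for $s$. Since $X$ and $Y$ are both indecomposable, the identity $\dimHom(X,Y)-\dimExt^1(X,Y)=\langle\udim X,\udim Y\rangle$ combined with $\Hom(X,Y)=0$ gives $\langle(a,b),(r,s)\rangle=-\dimExt^1(X,Y)\leq 0$. First I would expand this using the explicit Euler form recalled in the preliminaries, namely $\langle(a,b),(r,s)\rangle=ar+bs-nas$.

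Next I would eliminate $r$ by means of the length condition $r=a+b-s$. A direct substitution gives $\langle(a,b),(r,s)\rangle=a^2+ab-s\big((n+1)a-b\big)$, so the inequality above becomes $s\big((n+1)a-b\big)\geq a^2+ab$. To convert this into the claimed lower bound I must divide by $(n+1)a-b$, so the one genuine point to check is that this quantity is positive. This is exactly where the regularity of $X$ enters: its dimension vector is a positive imaginary root, so $q((a,b))=a^2+b^2-nab<0$, i.e. $a^2+b^2<nab$. Moreover $a\geq 1$ (otherwise $\udim X=(0,b)$ would make $X$ a sum of copies of the projective simple $P_1$) and $b\geq 1$ (otherwise $X$ would be preinjective); hence $b^2<a^2+b^2<nab$ forces $b<na<(n+1)a$, so $(n+1)a-b>0$. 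Dividing then yields $s\geq\frac{a^2+ab}{(n+1)a-b}$.

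Finally I would recognize the right-hand side as precisely the asserted bound: a one-line computation shows $b\big((n+1)a-b\big)+q((a,b))=(n+1)ab-b^2+a^2+b^2-nab=a^2+ab$, whence $\frac{a^2+ab}{(n+1)a-b}=b+\frac{q((a,b))}{(n+1)a-b}$, and the proof is complete. The entire argument is essentially a single substitution into the Euler form together with the length constraint; the only step requiring any care is the positivity of $(n+1)a-b$, which is the sole place where the regularity (hence the imaginary-root condition on $\udim X$) is actually used.
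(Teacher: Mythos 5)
Your proof is correct and follows essentially the same route as the paper: translate $\Hom(X,Y)=0$ into $\langle(a,b),(r,s)\rangle\leq 0$, eliminate $r$ via $r=a+b-s$, and divide by $(n+1)a-b$ after checking its positivity from the imaginary-root condition. Your positivity argument ($b^2<nab$ gives $b<na<(n+1)a$) is a slightly more direct version of the paper's contradiction via $\frac{b}{a}<\frac{n+\sqrt{n^2-4}}{2}<n$, but the substance is identical.
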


\begin{proof}
Since $\Hom(X,Y)=0$, we have $$\langle(\udim X,\udim
Y\rangle=\dim\Hom(X,Y)-\dimExt^1(X,Y)\leq 0.$$ It follows that
$$ar+bs-nas\leq 0.$$ Using $a+b=r+s$, we obtain that
$a(a+b-s)+bs-nas\leq 0$.  Therefore,
$$((n+1)a-b)s\geq a(a+b).$$
Assume for a contradiction that $(n+1)a\leq b$. Since $(a,b)$ is an
imaginary root,  $\frac{b}{a}\leq\frac{n+\sqrt{n^2-4}}{2}<n$. It
follows that $n+1\leq\frac{b}{a}<n$.
 This contradiction implies
$(n+1)a>b$ and thus
$$s\geq
\frac{a^2+ab}{(n+1)a-b}=b+\frac{a^2-nab+b^2}{(n+1)a-b}=b+\frac{q((a,b))}{(n+1)a-b}.$$
The proof is completed.
\end{proof}

\section{indecomposable modules $\tau^{-i}X$ with  $\udim X=(1,c)$}\label{(1,c)}

For each natural number $1\leq c\leq n-1$, the regular components
containing indecomposable modules  with dimension vectors $(1,c)$
are of special interests. For example, in section \ref{3Kronecker}, we will see that
in case $n=3$, the dimension
vectors of the indecomposable modules in a regular component
containing some $X$ with $\udim X=(1,1)$ or $(1,2)$ relate to pairs
of Fibonacci numbers and the GR measures of the indecomposable
modules in such a component are uniquely determined by their
dimensions.

Let $\udim X=(1,c)$. It is easily seen (for example in the following
lemma) that the GR submodule of $X$ is a projective simple module. Therefore,
$\tau^{-i}X\in\mathcal{B}$ for all $i\geq 1$ by Lemma \ref{B}.
It follows that $\tau^{-i}X[j]$ is, up to isomorphism,  the unique GR submodule
of $\tau^{-i}X[j+1]$ for each $i\geq 0$ and each $j\geq 1$ (Theorem \ref{thm1}). It turns out
that the dimension vector $(1,c)$ and the indexes $i$ and $j$
determine the GR measures. Using this we can show
that $\mu(\tau^{-i}X[j+1])$ is a direct successor of
$\mu(\tau^{-i}X[j])$ for every $i\geq 0$ and $j\geq 1$.

\begin{lemm} \label{c} Let $c$ be a natural number such that $1\leq c\leq n-1$.
Then the vector $(1,c)$ is an imaginary root. Let $X$ be an
indecomposable module with dimension vector $\udim X=(1,c)$.
\begin{itemize}
  \item[(1)]  $X$ is a (regular) quasi-simple module.
  \item[(2)]  A GR submodule of $X$ is  isomorphic to the projective simple module $P_1$.
  \item[(3)]  If $c=1$, $\tau^{-(i-1)}P_2$ is a GR submodule of
               $\tau^{-i}X$ for each $i\geq 1$. If $c>1$, $\tau^{-i}P_1$ is a GR submodule of
                $\tau^{-i}X$ for each $i\geq 0$.
  \item[(4)]  Let  $M$ be an indecomposable module. Then $\mu(M)=\mu(X)$ if and only if
             $\udim M=(1,c)=\udim X$.
\end{itemize}
\end{lemm}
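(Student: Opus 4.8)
The plan is to prove the four parts of Lemma~\ref{c} in order, using the dimension-vector arithmetic from Lemma~\ref{dim1} together with the structural results of Section~\ref{preprogr}. Let me sketch the approach for the final statement (part~(4)), since that is the substantive claim; parts (1)--(3) are preparatory and largely computational.

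For part~(1), I would first verify that $q((1,c))=1+c^2-nc<0$ for $1\le c\le n-1$, so $(1,c)$ is an imaginary root and $X$ is regular. To see $X$ is quasi-simple, I would show $X$ has no proper regular submodule: any such submodule would have dimension vector strictly below $(1,c)$ in both coordinates, forcing first coordinate $0$, hence preprojective or zero, a contradiction. By Proposition~\ref{part}(3) together with the characterization $X\in\mathcal{B}\iff X$ has no proper regular submodule, this gives $X\in\mathcal{B}$ and in particular quasi-simple. For part~(2), since $X$ has first coordinate $1$ and all its proper submodules are preprojective, the maximal-length indecomposable proper submodule must be the projective simple $P_1=(0,1)$; one checks via Lemma~\ref{GR}(3) that $P_1\subset X$ is the GR inclusion. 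Part~(3) then follows from Lemma~\ref{sub}: when $c>1$ the factor $X/P_1$ has dimension $(1,c-1)$ which is regular, so case (1) of Lemma~\ref{sub} applies; when $c=1$ the factor is simple, so case~(2) applies.

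The heart is part~(4). One direction is immediate: if $\udim M=(1,c)=\udim X$, then $M\in\mathcal{B}$ with $P_1$ as GR submodule, and by parts (2)--(3) the GR submodule and its dimension data agree with those of $X$; since $\mu(M)=\mu(P_1)\cup\{|M|\}=\mu(P_1)\cup\{1+c\}=\mu(X)$, equality holds. For the converse, suppose $\mu(M)=\mu(X)$. First, $M$ must be regular: the GR measure $\mu(X)$ is central by Proposition~\ref{part}, and only regular modules are central. Since $\mu(M)=\mu(X)$ has the same maximal element $|X|=1+c$, we get $|M|=1+c$, so $\udim M=(r,s)$ with $r+s=1+c$. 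The plan is to rule out every possibility except $(1,c)$. Because $M$ is regular and indecomposable, $\udim M$ is a positive imaginary root, so $q((r,s))<0$; I would use this inequality, together with $r+s=1+c\le n$, to constrain $(r,s)$ severely.

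The main obstacle will be the converse pigeonhole argument: showing that among all regular dimension vectors $(r,s)$ of length $1+c$, only $(1,c)$ can carry the measure $\mu(X)$. The key tool is Lemma~\ref{hom}. The idea is that $\mu(M)=\mu(X)$ forces $M$ and $X$ to share the same GR submodule $P_1$, hence $\Hom(P_1,M)\ne0$, which pins down $s\ge 1$ and more. The cleanest route is to observe that both $M$ and $X$ contain $P_1=(0,1)$ as GR submodule, so the second coordinate is at least $1$; combined with $q((r,s))<0$, i.e. $r^2+s^2<nrs$, and the length constraint $r+s=1+c$ with $c\le n-1$, a short case analysis should force $r=1$. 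Concretely, if $r\ge 2$ then $r+s=1+c\le n$ gives $s\le n-2$, and feeding this into $q((r,s))<0$ yields $r^2+s^2<nrs\le n\cdot r(n-2)$, which I expect to contradict the smallness of $r+s$; the delicate point is handling the boundary cases $c=n-1$ and small $r$, where the inequalities become tight, and here invoking Lemma~\ref{hom} with $X,M$ of equal length and the vanishing Hom forced by distinct dimension vectors should close the gap. Once $r=1$ is established, $s=c$ is forced by the length equation, completing the proof.
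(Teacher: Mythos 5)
There is a genuine gap in your argument for part (4), and a smaller one in part (1). For (4), your plan for the converse is to constrain $\udim M=(r,s)$ using only $q((r,s))<0$, the length equation $r+s=1+c\le n$, and Lemma \ref{hom}. This does not suffice. Take $n=4$, $c=3$, so $|X|=4$: the vector $(2,2)$ satisfies $q((2,2))=8-16<0$, so it is a positive imaginary root of the correct length and is realized by (infinitely many) indecomposable regular modules; and Lemma \ref{hom} applied with $\udim X=(1,3)$ only yields $s\ge 3+q((1,3))/((n+1)-3)=3-1=2$, which does not exclude $(2,2)$. In general, for $c\ge 2$ and $n\ge c+2$ one checks $(n-c)(c-1)\ge 2$, hence $-q((1,c))/((n+1)-c)\ge 1$, so Lemma \ref{hom} never forces $s\ge c$ when $i=0$ --- this is precisely why the paper's Corollary \ref{corohom} is stated only for $\tau^{-i}X$ with $i\ge 1$. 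The missing idea is Lemma \ref{GR}(3): since $P_1$ is a GR submodule of $M$ and every irreducible map out of $P_1$ lands in $P_2=(1,n)$, there is an epimorphism $P_2\ra M$, whence $\udim M\le(1,n)$ coordinatewise. This forces $r=1$ at once, and $s=c$ follows from $|M|=|X|$. That is the paper's (one-line) route, and some argument of this kind is indispensable.

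For (1), you assert that a proper regular submodule of $X$ would have dimension vector strictly below $(1,c)$ in both coordinates; that is not automatic, since a priori $X$ could contain an indecomposable submodule $U$ with $\udim U=(1,r)$, $r<c$ (such modules exist, as $(1,r)$ is an imaginary root). What excludes this is that the quotient $X/U$ would have dimension vector $(0,c-r)$, i.e.\ be a direct sum of copies of the projective simple $P_1$, so the projection would split and contradict the indecomposability of $X$. The paper makes exactly this observation (``the GR factor has dimension $(0,c-r)$, which is impossible''), and you need it both to conclude in (1) that every proper submodule is preprojective and to identify the GR submodule in (2). Parts (2), (3) and the forward direction of (4) are otherwise consistent with the paper's proof.
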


\begin{proof} It is easily seen that $q((1,c))<0$ and thus $(1,c)$
is an imaginary root. Let $X$ be indecomposable with $\udim X=(1,c)$
and $Y$ a GR submodule of $X$. Then $\udim Y=(0,1)$ or $\udim
Y=(1,r)$ with $r<c$. If the second case holds, then the GR factor
has dimension $(0,c-r)$ which is impossible. Thus $Y$ is isomorphic
to $P_1$, the projective simple module. In particular, $X$ is
quasi-simple since it has no proper regular submodules. Thus we may
describe the GR submodules of $\tau^{-i}X$ using Lemma \ref{sub}. If
$M$ is an indecomposable module with $\mu(M)=\mu(X)$, then $P_1$ is
a GR submodule of $M$ and thus there is an epimorphism $P_2\ra M$
(Lemma \ref{GR}). In particular, we have $\udim M=(1,r)$ for some
$r<n$ since $\udim P_2=(1,n)$.  Therefore, $\udim M=\udim X$ since
$|M|=|X|$.
\end{proof}

\begin{lemm} Let $X$ be an indecomposable module with dimension
$\udim X=(1,c)$ and $1\leq c\leq n-1$. Let $i\geq 1$ and suppose
that $\udim\tau^{-i}X=(a,b)$. Then $0<-q((a,b))<(n+1)a-b$.
\end{lemm}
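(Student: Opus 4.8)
The plan is to reduce both inequalities to two elementary facts: the invariance of the quadratic form along the $\tau$-orbit, and the monotonicity of the linear functional $(n+1)a-b$ along that orbit.

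First I would dispose of the left inequality. Writing $(a_i,b_i)=\udim\tau^{-i}X$, the Coxeter transformation preserves $q$, so $q((a_i,b_i))=q((1,c))=1+c^2-nc$ for every $i$; hence $-q((a,b))=c(n-c)-1=:Q$. Since $1\le c\le n-1$ and $n\ge 3$ one has $c(n-c)\ge n-1\ge 2$, so $Q\ge 1>0$. This settles the left inequality, and it also identifies the quantity $-q((a,b))$ as the single constant $Q$ that must then be compared with $(n+1)a-b$.

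The core of the argument is the right inequality $(n+1)a-b>Q$. Here the estimate $(n+1)a>b$ obtained in Lemma \ref{hom} only yields $P_i>0$, which is far weaker than the required $P_i>Q$; the real point is to control how $P_i:=(n+1)a_i-b_i$ grows along the orbit. Because $\tau^{-i}X$ is regular, the formula $(a_{i+1},b_{i+1})=(a_i,b_i)\Phi^{-1}=(nb_i-a_i,(n^2-1)b_i-na_i)$ applies, and a one-line substitution gives $P_{i+1}=(n+1)b_i-a_i$, whence
\[
P_{i+1}-P_i=(n+2)(b_i-a_i).
\]
By Lemma \ref{dim1}(2) we have $b_i>a_i$ for every $i\ge 1$, so $(P_i)_{i\ge 1}$ is strictly increasing, and it therefore suffices to verify the single base case $P_1>Q$.

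Finally I would compute the base case directly. From $\udim\tau^{-1}X=(1,c)\Phi^{-1}=(nc-1,(n^2-1)c-n)$ one gets $P_1=(n+1)(nc-1)-\bigl((n^2-1)c-n\bigr)=c(n+1)-1$, and therefore
\[
P_1-Q=\bigl(c(n+1)-1\bigr)-\bigl(c(n-c)-1\bigr)=c(c+1)>0.
\]
Combining this with the monotonicity gives $(n+1)a-b=P_i\ge P_1>Q=-q((a,b))$ for all $i\ge 1$, which is the desired right inequality. The only genuine obstacle is conceptual rather than computational: recognizing that $(n+1)a-b$ is monotone along the $\tau^{-1}$-orbit, so that the whole statement collapses to the case $i=1$; once the telescoping identity above is in hand, everything else is routine arithmetic, with the clean margin $c(c+1)$ serving as a useful sanity check.
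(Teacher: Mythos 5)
Your proof is correct, and for the substantive half of the statement it takes a genuinely different route from the paper's. Both arguments begin identically: invariance of $q$ under the Coxeter transformation gives $-q((a,b))=nc-c^2-1\geq n-2>0$, and both verify the case $i=1$ by direct computation with $(a,b)=(nc-1,(n^2-1)c-n)$. Where you diverge is in how the case $i\geq 2$ is reduced: the paper bounds $-q((a,b))\leq\tfrac{n^2}{4}$ uniformly, shows the first coordinate of $\udim\tau^{-i}X$ exceeds $n^2$ once $i\geq 2$, and then derives a contradiction from $\tfrac{n^2}{4}\geq(n+1)a-b$ using the slope bound $\tfrac{b}{a}<\tfrac{n+\sqrt{n^2-4}}{2}$ for imaginary roots. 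You instead observe that the linear form $P_i=(n+1)a_i-b_i$ satisfies $P_{i+1}=(n+1)b_i-a_i$, hence $P_{i+1}-P_i=(n+2)(b_i-a_i)>0$ for $i\geq 1$ by Lemma \ref{dim1}(2), so the whole statement collapses to $P_1>Q$, which you check with the clean margin $P_1-Q=c(c+1)$. Your telescoping argument is shorter, avoids the growth estimate $a\geq n^2$ and the irrational slope bound entirely, and yields the slightly stronger quantitative conclusion that $(n+1)a-b$ is increasing along the $\tau^{-1}$-orbit; the paper's version, by contrast, needs no information about how $(n+1)a-b$ evolves and works directly from coarse size estimates. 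I verified your identities: $(n+1)(nb_i-a_i)-\bigl((n^2-1)b_i-na_i\bigr)=(n+1)b_i-a_i$ and $(n+1)(nc-1)-\bigl((n^2-1)c-n\bigr)=c(n+1)-1$ are both correct, and the appeal to Lemma \ref{dim1}(2) is legitimate since $\udim X=(1,c)$ has $1\leq c$. No gaps.
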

\begin{proof}Since the quadratic $q$ is invariant on the
dimension vectors of the indecomposable modules in a $\tau$-orbit, we have $$-q(\udim
\tau^{-i}X)=-q((a,b))=-q((1,c))=-c^2+nc-1=-(c-\frac{n}{2})^2+\frac{n^2}{4}-1.$$

If $i=1$, then $(a,b)=\udim
\tau^{-1}X=(1,c)\left(\begin{array}{cc}-1&-n\\n&n^2-1\\\end{array}\right)
=(nc-1,(n^2-1)c-n).$ Assume for a contradiction that $-q((a,b))\geq
(n+1)a-b$.  Thus $nc-c^2-1\geq (n+1)(nc-1)-(n^2-1)c+n$.  It follows
that $c^2+c\leq 0$ which is impossible. Thus $-q((a,b))<(n+1)a-b$.

Now we assume that $i\geq 2$. If $i=2$, then
$$(a,b)=\udim\tau^{-2}X=(nc-1,(n^2-1)c-n)\left(\begin{array}{cc}-1&-n\\n&n^2-1\\\end{array}\right).$$
Thus $a-n^2=n^3c-2nc-2n^2+1=nc(n^2-2)-2(n^2-2)-3=(nc-2)(n^2-2)-3\geq
0$ since $c\geq 1$ and $n\geq 3$.  Thus the first coordinate of the
dimension vector $\udim\tau^{-i}X$ is greater than $n^2$ for every
$i\geq 2$ by Lemma \ref{dim1}. Since
$-q(\udim\tau^{-i}X)=-q((a,b))=-(c-\frac{n}{2})^2+\frac{n^2}{4}-1\leq
\frac{n^2}{4}$, it is sufficient to show for each $i\geq 2$ that
$\frac{n^2}{4}<(n+1)a-b$. If $\frac{n^2}{4}\geq (n+1)a-b$, then
$n+1\leq
\frac{n^2}{4a}+\frac{b}{a}<\frac{n+\sqrt{n^2-4}}{2}+\frac{n^2}{4a}<n+\frac{n^2}{4a}<n+1$, since $a\geq n^2$.
This contradiction implies that $0<-q((a,b))<(n+1)a-b$.
\end{proof}

\begin{coro}\label{corohom}
Let $X$ be an indecomposable module with dimension $\udim X=(1,c)$
for some $1\leq c\leq n-1$ and $M=\tau^{-i}X$ for some $i\geq 1$ with
$\udim M=(a,b)$. Let $N$ be an indecomposable regular
module with dimension vector $(r,s)$ such that $|M|=|N|$. If
$\Hom(M,N)=0$, then $s\geq b$
\end{coro}

\begin{proof} We have seen in Lemma \ref{hom} that $s\geq b-\frac{-q((a,b))}{(n+1)a-b}$.
The statement follows since $0<\frac{-q((a,b))}{(n+1)a-b}<1$.
\end{proof}

\begin{lemm}\label{+1}Let $M$ be an indecomposable module with $\udim
M=(a,b)$, $1<a\leq b$. Thus $\udim\tau^{-1}M=(nb-a,(n^2-1)b-na)$.
Assume that that $(a-1,b+1)$ is not an imaginary root. Then neither
is $(nb-a-t,(n^2-1)b-na+t)$ for any $1\leq t\leq nb-a-1$.
\end{lemm}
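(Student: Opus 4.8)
The plan is to work entirely with the quadratic form $q$, using that a vector fails to be an imaginary root exactly when $q\geq 0$. Thus the hypothesis reads $q((a-1,b+1))\geq 0$ and the goal is to show $q((nb-a-t,(n^2-1)b-na+t))\geq 0$ for every $1\leq t\leq nb-a-1$. Setting $v_0=(nb-a,(n^2-1)b-na)=\udim\tau^{-1}M$ and $\delta=(-1,1)$, the vectors in the conclusion are precisely $v_t=v_0+t\delta$. I would therefore regard $q(v_t)$ as a quadratic polynomial in the single variable $t$: its leading coefficient is $q(\delta)=n+2>0$, so it is an upward parabola, and its constant term is $q(v_0)=q((a,b))$, since $q$ is invariant under the Coxeter transformation (this can also be checked by direct substitution into $q$).

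The decisive point is the location of the vertex. Computing the linear coefficient, the vertex of this parabola sits at $t_0=\tfrac{1}{2}((n-1)a-(n^2-n-1)b)$. Using $a\leq b$ and $n\geq 3$ one gets $(n-1)a-(n^2-n-1)b\leq (n-1)b-(n^2-n-1)b=n(2-n)b<0$, so $t_0<0$. Hence $q(v_t)$ is strictly increasing on $t\geq 0$, and in particular its minimum over the admissible range $1\leq t\leq nb-a-1$ is attained at $t=1$. It therefore suffices to prove $q(v_1)\geq 0$.

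Finally I would tie $q(v_1)$ to the hypothesis by a direct expansion. On the one hand $q((a-1,b+1))=q((a,b))+(n+2)(b-a+1)$, and on the other hand the polynomial above gives a formula for $q(v_1)$ in terms of $q((a,b))$; subtracting, the difference factors cleanly as $q(v_1)-q((a-1,b+1))=(n+2)(n-2)((n+1)b-a)$. For $n\geq 3$ both $(n+2)(n-2)$ and $(n+1)b-a$ are positive (the latter because $(n+1)b\geq(n+1)a>a$), whence $q(v_1)>q((a-1,b+1))\geq 0$. Combined with the monotonicity above this yields $q(v_t)>0$ throughout $1\leq t\leq nb-a-1$, so none of these vectors is an imaginary root, as required.

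The step I expect to be the main obstacle is the vertex/monotonicity argument: for a convex quadratic, controlling the two endpoints does not bound the interior, so everything hinges on checking that the vertex lies at negative $t$ (equivalently, that $v_0$ already sits beyond the axis of symmetry of $q$ along the line of constant coordinate-sum $x_1+x_2=\mathrm{const}$). The closing factorization is the clean way to extract the hypothesis, and it is precisely there that $n\geq 3$ enters essentially, through the factor $n-2>0$.
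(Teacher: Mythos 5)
Your proof is correct, and it takes a genuinely different route from the paper's. The paper argues with slopes: writing the vectors as $(nb-a-t,(n^2-1)b-na+t)$, it shows that the ratio of second to first coordinate equals $n-\frac{b-n-1}{nb-a-1}$ at $t=1$, bounds this below by $\frac{n+\sqrt{n^2-4}}{2}$ using the hypothesis in the form $\frac{a-1}{b+1}\leq\frac{n-\sqrt{n^2-4}}{2}$ (the key step being the inequality $\frac{b-n-1}{nb-a-1}\leq\frac{a-1}{b+1}$, proved by contradiction against $q((a,b))\geq n+2$), and then handles $t>1$ by observing that this ratio is increasing in $t$, so the vectors stay outside the imaginary cone. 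You instead restrict $q$ to the line $v_0+t\delta$ with $\delta=(-1,1)$ and exploit convexity: the identity $q(v_0+t\delta)=q(v_0)+(n+2)(B-A)t+(n+2)t^2$ (with $v_0=(A,B)$), the vertex location $t_0=\frac{1}{2}((n-1)a-(n^2-n-1)b)<0$, and the factorization $q(v_1)-q((a-1,b+1))=(n+2)(n-2)((n+1)b-a)$ all check out, and together they give $q(v_t)\geq q(v_1)>q((a-1,b+1))\geq 0$ on the whole range. Your version has two small advantages: it avoids the irrational thresholds $\frac{n\pm\sqrt{n^2-4}}{2}$ entirely, and it nowhere needs $q((a,b))<0$ (i.e., $M$ regular), which the paper's contradiction step nominally invokes when it writes $0>q(\udim M)$ (though the paper's argument survives even for $q(\udim M)=1$, since the derived inequality is $q((a,b))\geq n+2$). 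The paper's version, in exchange, makes the geometric picture — that the vectors land below the lower ray of the imaginary cone — more visible. You are also right that the monotonicity/vertex step is the point where a naive endpoint check would fail; the paper's analogue of this step is the monotonicity of the coordinate ratio in $t$.
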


\begin{proof} Let $t=1$. By assumption $a\leq b$, we have $\frac{a-1}{b+1}\leq
\frac{n-\sqrt{n^2-4}}{2}$, since $(a-1,b+1)$ is not an imaginary
root, and $1<nb-a<(n^2-1)b-na$. Thus we need to show that
$$\frac{(n^2-1)b-na+1}{nb-a-1}=n-\frac{b-n-1}{nb-a-1}\geq
\frac{n+\sqrt{n^2-4}}{2}.$$ Therefore, it is sufficient to show that
$$\frac{b-n-1}{nb-a-1}\leq \frac{a-1}{b+1}.$$
Assume for a contradiction that $\frac{b-n-1}{nb-a-1}\geq
\frac{a-1}{b+1}$. Then $b^2-nb-b+b-n-1\geq nab-a^2-a-nb+a+1$, and
thus $0>q(\udim M)=q((a,b))=b^2-nab+a^2\geq 2+n$, which is impossible.
Since $\frac{(n^2-1)b-na+1}{nb-a-1}\geq \frac{n+\sqrt{n^2-4}}{2}$,
for each $1<t\leq nb-a-1$, we have
$$\frac{(n^2-1)b-na+t}{nb-a-t}>\frac{(n^2-1)b-na+1}{nb-a-1}\geq
\frac{n+\sqrt{n^2-4}}{2}.$$ The proof is completed.
\end{proof}

\begin{coro}\label{coro+1}Let $X$ be an indecomposable module with dimension $\udim X=(1,c)$
with $1\leq c\leq n-1$ and $M=\tau^{-i}X$ for $i\geq 1$ with $\udim
M=(a,b)$. Then $(a-t,b+t)$ is not an imaginary root for any $1\leq
t\leq a-1$.
\end{coro}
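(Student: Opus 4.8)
The plan is to argue by induction on $i\geq 1$, using Lemma \ref{+1} for the inductive step and settling the base case $i=1$ by a direct computation. Write $(a_i,b_i)=\udim\tau^{-i}X$. Since the quadratic form is constant along a $\tau$-orbit, $q((a_i,b_i))=q((1,c))=1-nc+c^2$, and by Lemma \ref{dim1}(2) we have $a_i<b_i$ for every $i\geq 1$. The statement to prove is that $q((a_i-t,b_i+t))\geq 0$ for all $1\leq t\leq a_i-1$, because a positive vector fails to be imaginary exactly when its value under $q$ is non-negative.

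The engine of both steps is the elementary identity
$$q((a-t,b+t))=q((a,b))+(n+2)\,t\,(t+b-a),$$
obtained by expanding $q$. Since $b_i>a_i$, the correction term $(n+2)\,t\,(t+b_i-a_i)$ is positive and strictly increasing in $t\geq 1$, so it suffices to verify the inequality at $t=1$. For the base case $i=1$ one has $(a_1,b_1)=(nc-1,(n^2-1)c-n)$, whence $1+b_1-a_1=(n^2-n-1)c-(n-2)$; substituting into the identity reduces the claim to
$$(n+2)\bigl((n^2-n-1)c-(n-2)\bigr)\geq nc-c^2-1,$$
which holds comfortably for all $n\geq 3$ and $1\leq c\leq n-1$, since the left-hand side is bounded below by $(n+2)(n-1)^2$ (its value at $c=1$) while the right-hand side $c(n-c)-1$ is at most $n^2/4$. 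This settles $i=1$.

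For the inductive step, fix $i\geq 2$ and apply Lemma \ref{+1} to $M=\tau^{-(i-1)}X$, whose dimension vector is $(a_{i-1},b_{i-1})$. The hypotheses are met: using Lemma \ref{dim1}(1) the first coordinate does not decrease along the orbit, so $a_{i-1}\geq a_1=nc-1\geq n-1\geq 2$, giving $1<a_{i-1}$, while $a_{i-1}\leq b_{i-1}$ is Lemma \ref{dim1}(2); and $(a_{i-1}-1,b_{i-1}+1)$ is not imaginary by the $t=1$ instance of the induction hypothesis. Since $\udim\tau^{-i}X=(nb_{i-1}-a_{i-1},(n^2-1)b_{i-1}-na_{i-1})=(a_i,b_i)$ and $nb_{i-1}-a_{i-1}-1=a_i-1$, the conclusion of Lemma \ref{+1} is precisely that $(a_i-t,b_i+t)$ is not imaginary for $1\leq t\leq a_i-1$, completing the induction.

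The main obstacle is the base case: because $\udim X=(1,c)$ has first coordinate $1$, Lemma \ref{+1} cannot be invoked at the bottom of the orbit, so $i=1$ must be checked by hand. The identity above makes this painless by collapsing the whole range of $t$ to the single worst case $t=1$; the only care needed is to confirm the resulting polynomial inequality in $n$ and $c$, which is where I would spend attention to ensure no small-$n$ or boundary value of $c$ is overlooked.
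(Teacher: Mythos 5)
Your proof is correct and follows essentially the same route as the paper: reduce everything to the case $i=1$, $t=1$ via Lemma~\ref{+1} and induction along the $\tau$-orbit, then verify that base case by a direct computation. The only difference is cosmetic --- the paper checks the base case by bounding the slope $\frac{b+1}{a-1}$ against $\frac{n+\sqrt{n^2-4}}{2}$, whereas you expand $q((a-t,b+t))=q((a,b))+(n+2)t(t+b-a)$ explicitly; both are routine and your identity also handles all $t$ at $i=1$ in one stroke.
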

\begin{proof} By Lemma \ref{+1}, we need only to show for $i=1$ that
$(a-1,b+1)=(nc-2,(n^2-1)c-n+1)$ is not an imaginary root. It is
sufficient to show that
$$\frac{b+1}{a-1}=\frac{(n^2-1)c-n+1}{nc-2}\geq n>\frac{n+\sqrt{n^2-4}}{2}.$$
If $\frac{(n^2-1)c-n+1}{nc-2}<n$, then we have $n\leq c-1$ which is
impossible.
\end{proof}

\begin{theo}\label{thm2} Let $X$ be an indecomposable module with dimension
$\udim X=(1,c)$ with $1\leq c\leq n-1$ and $M$ an indecomposable
module. Let $i\geq 0$ and $j\geq 1$.  Then
$\mu(M)=\mu(\tau^{-i}X[j])$ if and only if $M\cong \tau^{-i}Y[j]$
for some indecomposable module $Y$ with $\udim Y=(1,c)=\udim X$.
\end{theo}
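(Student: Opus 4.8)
The plan is to prove both implications, reducing the forward (``if'') direction to the recursive structure of Gabriel--Roiter filtrations and treating the backward (``only if'') direction by induction on the quasi-length $j$, with the case $j=1$ as the genuine core.

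For the ``if'' direction, suppose $M\cong\tau^{-i}Y[j]$ with $\udim Y=(1,c)=\udim X$. I would read off the entire Gabriel--Roiter filtration of $\tau^{-i}Y[j]$ from Theorem \ref{thm1} and Lemma \ref{c}: iterating Theorem \ref{thm1} shows that $\tau^{-i}Y[j-1]\subset\tau^{-i}Y[j]$, \ldots, $\tau^{-i}Y=\tau^{-i}Y[1]$ are successive GR submodules, while by Lemma \ref{c}(2)--(3) the GR submodule of $\tau^{-i}Y$ is a preprojective module $P_m$ with the index $m$ determined by $c$ and $i$, whose own GR filtration is the preprojective chain $P_1\subset\cdots\subset P_m$. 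Concatenating these gives a GR filtration realizing $\mu(\tau^{-i}Y[j])=\{|P_1|,\ldots,|P_m|\}\cup\{|\tau^{-i}Y[\ell]|: 1\le\ell\le j\}$. Every length here is determined by $\udim Y=(1,c)$ and the indices $i,\ell$ through the Coxeter transformation, and $m$ depends only on $c$ and $i$; since $\udim Y=\udim X$ these data coincide with those for $X$, giving $\mu(\tau^{-i}Y[j])=\mu(\tau^{-i}X[j])$.

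For the ``only if'' direction I would induct on $j$. Since $\mu(\tau^{-i}X[j])$ is a central measure (Proposition \ref{part}(3)), $\mu(M)=\mu(\tau^{-i}X[j])$ forces $M$ regular. For the step $j\ge 2$: writing $\mu(\tau^{-i}X[j])=\mu(\tau^{-i}X[j-1])\cup\{|\tau^{-i}X[j]|\}$ (Theorem \ref{thm1}) with $|\tau^{-i}X[j]|$ the largest element, a GR submodule $M'$ of $M$ satisfies $\mu(M')=\mu(M)\setminus\{|M|\}=\mu(\tau^{-i}X[j-1])$ and $|M|=|\tau^{-i}X[j]|$. By induction $M'\cong\tau^{-i}Y[j-1]$ for some $Y$ with $\udim Y=(1,c)$. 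By Lemma \ref{GR}(3) there is an irreducible monomorphism $M'\ra Z$ with $Z$ indecomposable and an epimorphism $Z\ra M$; since in a regular component the unique irreducible monomorphism out of $\tau^{-i}Y[j-1]$ raises the quasi-length, $Z\cong\tau^{-i}Y[j]$. Then $|Z|=|\tau^{-i}Y[j]|=|\tau^{-i}X[j]|=|M|$ (equal dimension vectors), so the epimorphism $Z\ra M$ is an isomorphism and $M\cong\tau^{-i}Y[j]$.

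The main obstacle is the base case $j=1$, i.e.\ recovering $\udim M$ from $\mu(M)=\mu(\tau^{-i}X)$. For $i=0$ this is exactly Lemma \ref{c}(4). For $i\ge 1$, $M$ is regular and its GR submodule has measure $\mu(P_m)$, a take-off measure, so $M$ has a preprojective GR submodule, i.e.\ $M\in\mathcal{B}$, and $|M|=|\tau^{-i}X|$. Writing $\udim\tau^{-i}X=(a,b)$ and $\udim M=(r,s)$ with $r+s=a+b$, I would split on $\Hom(\tau^{-i}X,M)$. If it is nonzero, any nonzero map has image with no preprojective summand (as $\Hom(\text{regular},\text{preprojective})=0$), hence the image is not a proper submodule of $M\in\mathcal{B}$; so the map is an epimorphism and, by equality of lengths, an isomorphism, giving $\udim M=\udim\tau^{-i}X$. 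If $\Hom(\tau^{-i}X,M)=0$, then Corollary \ref{corohom} gives $s\ge b$, while Corollary \ref{coro+1} (ruling out $(a-t,b+t)$ as an imaginary root for $1\le t\le a-1$) forces $r\ge a$, i.e.\ $s\le b$; hence $s=b$ and $\udim M=\udim\tau^{-i}X$. In either case $\udim M=\udim\tau^{-i}X$, so $Y:=\tau^{i}M$ is indecomposable regular with $\udim Y=(\udim M)\Phi^{i}=\udim X=(1,c)$ and $M\cong\tau^{-i}Y$, completing the base case and the proof. The delicate points I expect to check carefully are the dichotomy on $\Hom(\tau^{-i}X,M)$ and the bookkeeping ensuring the two corollaries combine to pin down $(r,s)$ exactly.
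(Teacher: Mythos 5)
Your proposal is correct and follows essentially the same route as the paper: the backward direction via Theorem \ref{thm1}, Lemma \ref{c} and Lemma \ref{sub}; the base case $j=1$, $i\geq 1$ by observing that $M$ has no proper regular submodules (so $\Hom(\tau^{-i}X,M)=0$ unless $M\cong\tau^{-i}X$) and then pinning down $\udim M$ with Corollaries \ref{corohom} and \ref{coro+1}; and the inductive step via the GR submodule together with Lemma \ref{GR}(3). The only cosmetic difference is that you phrase the Hom-space argument as a case split while the paper states its contrapositive, and you induct on $j$ where the paper inducts on $i+j$.
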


\begin{proof}  We first assume that $j=1$. By Lemma \ref{c}(4), it is sufficient to consider $i\geq
1$.  If $M\cong \tau^{-i}Y$ for some
indecomposable module $Y$ with $\udim Y=(1,c)=\udim X$, then the GR
measures are obvious the same by Lemma \ref{sub}. Conversely, since
$\mu(M)=\mu(\tau^{-i}X)$, a GR submodule of $M$ is preprojective. In
particular, $M$ has no proper regular submodules. Because
$|M|=|\tau^{-i}X|$, the  vector space $\Hom(\tau^{-i}X,M)=0$ if $M\ncong \tau^{-i}X$. Let
$\udim\tau^{-i}X=(a,b)$  and $\udim M=(r,s)$. Then we have $s\geq b$
by Corollary \ref{corohom} and thus $(r,s)=(a-t,b+t)$ for some
$t\geq 0$. However, Corollary \ref{coro+1} implies that $(r,s)$ is
not an imaginary root if $t\geq 1$. Therefore, $(r,s)=(a,b)$ and
thus $(r,s)\Phi^i=(a,b)\Phi^i=(1,c)$. It follows that
$M\cong\tau^{-i}Y$ for some $Y$ with dimension vector $\udim
Y=(1,c)=\udim X$.

Now we assume that $j>1$. If $M\cong \tau^{-i}Y[j]$ for some
indecomposable module $Y$ with $\udim Y=(1,c)=\udim X$, then
$\tau^{-i}X$ and $\tau^{-i}Y$ have the same dimension vector and
isomorphic GR submodules for each $i\geq 0$. Thus
$\mu(\tau^{-i}Y[j])$ =$\mu(\tau^{-i}X[j])$ since the irreducible
monomorphisms are GR inclusions (Theorem \ref{thm1}).
Conversely, if $\mu(M)=\mu(\tau^{-i}X[j])$, then
$\mu(M)=\mu(\tau^{-i}X[j-1])\cup\{|M|\}$ since $\tau^{-i}X[j-1]$ is
a GR submodule of $\tau^{-i}X[j]$ (Theorem \ref{thm1}). In
particular, if $N$ is a GR submodule of $M$, then
$\mu(N)=\mu(\tau^{-i}X[j-1])$. By induction on $i+j$, we have $N\cong
\tau^{-i}Y[j-1]$ for some indecomposable module $Y$ with $\udim
Y=(1,c)=\udim X$. It follows that $\udim
\tau^{-i}Y[j]=\udim\tau^{-i}X[j]$ and thus
$|\tau^{-i}Y[j]|=|\tau^{-i}X[j]|=|M|$. Note that there is an
epimorphism $\tau^{-i}Y[j]\ra M$ since $N\cong \tau^{-i}Y[j-1]$ is a
GR submodule of $M$ (Lemma \ref{GR}(3)). Therefore, $M\cong
\tau^{-i}Y[j]$.
\end{proof}

This theorem concludes that the GR measures $\mu(\tau^{-i}X[j])$ are
determined by the indexes $i$ and $j$ and the dimension vector
$(1,c)$.  Using this result, we can describe the direct successors
of $\mu(\tau^{-i}X[j])$ for all $i\geq 0$ and $j\geq 1$, where
$\udim X=(1,c)$.

\begin{theo}\label{thm3} Let $X$ be an indecomposable module with dimension
vector $\udim X=(1,c)$ for some $1\leq c\leq n-1$. Then for each $i\geq
0$ and $j\geq 1$, $\mu(\tau^{-i}X[j+1])$ is the direct successor of
$\mu(\tau^{-i}X[j])$.
\end{theo}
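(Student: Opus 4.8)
The plan is to show that $\mu(\tau^{-i}X[j+1])$ is a direct successor of $\mu(\tau^{-i}X[j])$ by proving two things: first, that $\mu(\tau^{-i}X[j])<\mu(\tau^{-i}X[j+1])$, which is automatic since $\tau^{-i}X[j]$ is a GR submodule of $\tau^{-i}X[j+1]$ (Theorem \ref{thm1}), giving $\mu(\tau^{-i}X[j+1])=\mu(\tau^{-i}X[j])\cup\{|\tau^{-i}X[j+1]|\}$; and second, the harder \emph{directness}, namely that no GR measure $\mu(M)$ of an indecomposable module $M$ can lie strictly between them. So suppose for contradiction that $M$ is indecomposable with
$$\mu(\tau^{-i}X[j])<\mu(M)<\mu(\tau^{-i}X[j+1]).$$

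Since $\mu(\tau^{-i}X[j+1])$ starts with $\mu(\tau^{-i}X[j])$ (it is obtained by adjoining a single larger integer), the betweenness forces $\mu(M)$ to also start with $\mu(\tau^{-i}X[j])$. I would then invoke Lemma \ref{piling}: as $\mu(M)$ starts with $\mu(\tau^{-i}X[j])$, any GR submodule of $M$ realizing this initial segment must be a copy of a module with GR measure $\mu(\tau^{-i}X[j])$, and by Theorem \ref{thm2} such a module is isomorphic to $\tau^{-i}Y[j]$ for some $Y$ with $\udim Y=(1,c)$. Thus $M$ contains $\tau^{-i}Y[j]$ as a GR submodule, and Lemma \ref{GR}(3) yields an irreducible monomorphism $\tau^{-i}Y[j]\ra W$ with $W$ indecomposable together with an epimorphism $W\ra M$. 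The only irreducible monomorphism out of the quasi-simple-based module $\tau^{-i}Y[j]$ in its regular component is $\tau^{-i}Y[j]\ra\tau^{-i}Y[j+1]$, so $W\cong\tau^{-i}Y[j+1]$ and there is an epimorphism $\tau^{-i}Y[j+1]\ra M$.

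\textbf{The crucial length comparison} is then this: the epimorphism gives $|M|\le|\tau^{-i}Y[j+1]|=|\tau^{-i}X[j+1]|$ (equal dimension vectors by Theorem \ref{thm2}). If the inequality were strict, then $\mu(M)=\mu(\tau^{-i}X[j])\cup\{|M|\}$ with $|M|<|\tau^{-i}X[j+1]|$ would give $\mu(M)>\mu(\tau^{-i}X[j+1])$ by the total-order definition (a smaller adjoined element makes the set larger), contradicting $\mu(M)<\mu(\tau^{-i}X[j+1])$. Hence $|M|=|\tau^{-i}X[j+1]|$, and the epimorphism $\tau^{-i}Y[j+1]\ra M$ between modules of equal length is an isomorphism, forcing $\mu(M)=\mu(\tau^{-i}X[j+1])$ and contradicting strict betweenness.

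\textbf{The main obstacle} I anticipate is justifying cleanly that a module $M$ whose GR measure strictly starts with $\mu(\tau^{-i}X[j])$ must actually \emph{contain} a copy of $\tau^{-i}Y[j]$ as a GR submodule (rather than merely having a coincidence of the initial integer segment); this is where Lemma \ref{piling} together with the rigidity supplied by Theorem \ref{thm2} must be combined carefully, and where I would want to argue that the GR submodule of $M$ has precisely the measure $\mu(\tau^{-i}X[j])$. The step asserting that $\tau^{-i}Y[j]\ra\tau^{-i}Y[j+1]$ is the unique irreducible monomorphism also deserves a line recalling the structure of regular components (irreducible maps raise quasi-length by one), but once these structural points are in place the length/order bookkeeping in the last paragraph closes the argument.
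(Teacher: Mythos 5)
Your overall strategy is the paper's: assume an indecomposable $M$ with $\mu(\tau^{-i}X[j])<\mu(M)<\mu(\tau^{-i}X[j+1])$, locate a copy of some $\tau^{-i}Y[j]$ inside $M$ via Theorem \ref{thm2}, and use Lemma \ref{GR}(3) plus a length comparison against $\tau^{-i}Y[j+1]$ to get a contradiction. But there is a genuine gap at the step ``thus $M$ contains $\tau^{-i}Y[j]$ as a GR submodule.'' From the fact that $\mu(M)$ starts with $\mu(\tau^{-i}X[j])$ you only obtain that some term $N$ of a GR filtration $M_1\subset\cdots\subset M_t=M$ has $\mu(N)=\mu(\tau^{-i}X[j])$; such an $N$ is a GR submodule of the \emph{next filtration term} $N'$, not of $M$ itself. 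Your argument silently assumes $\mu(M)=\mu(\tau^{-i}X[j])\cup\{|M|\}$, i.e., that the filtration jumps from $N$ straight to $M$. A measure strictly between the two given ones can perfectly well be of the form $\mu(\tau^{-i}X[j])\cup\{m_1,\ldots,m_t\}$ with $t\geq 2$ and $m_1>|\tau^{-i}X[j+1]|$ (the smallest element of the symmetric difference with $\mu(\tau^{-i}X[j+1])$ is then $|\tau^{-i}X[j+1]|$, so such a set is indeed smaller); for such an $M$ the module $\tau^{-i}Y[j]$ is not a GR submodule of $M$, Lemma \ref{GR}(3) yields no epimorphism onto $M$, and your final length bookkeeping does not apply.

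The repair is exactly the paper's proof. First note that $\mu(M)<\mu(\tau^{-i}X[j+1])$ forces the smallest element $m_1$ of $\mu(M)\setminus\mu(\tau^{-i}X[j])$ to satisfy $m_1>|\tau^{-i}X[j+1]|$. Then run your epimorphism argument on the consecutive pair $N\subset N'$ of the GR filtration, where $N\cong\tau^{-i}Y[j]$ by Theorem \ref{thm2} and $|N'|=m_1$: since $N$ really is a GR submodule of $N'$, Lemma \ref{GR}(3) and the uniqueness of the irreducible monomorphism out of $\tau^{-i}Y[j]$ give an epimorphism $\tau^{-i}Y[j+1]\ra N'$, whence $m_1=|N'|\leq|\tau^{-i}Y[j+1]|=|\tau^{-i}X[j+1]|$, a contradiction. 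The remaining ingredients of your write-up (identifying the filtration term via Theorem \ref{thm2}, the equality of lengths, the order comparisons) are correct and coincide with what the paper uses.
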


\begin{proof} For the purpose of a contradiction, we assume that
$M$ is an indecomposable module such that
$\mu(\tau^{-i}X[j])<\mu(M)<\mu(\tau^{-i}X[j+1])$. It follows that
$\mu(M)=\mu(\tau^{-i}X[j])\cup\{m_1,m_2,\ldots m_t\}$ and
$m_1>|\tau^{-i}X[j+1]|$. Let $N\subset N'$ be indecomposable modules in a
GR filtration of $M$ with $\mu(N)=\mu(\tau^{-i}X[j])$ and
$|N'|=m_1$. Then $N\cong \tau^{-i}Y[j]$ for some indecomposable $Y$
with $\udim Y=\udim X=(1,c)$ by Theorem \ref{thm2}. Since $N$ is a
GR submodule of $N'$, there is an epimorphism $\tau^{-i}Y[j+1]\ra
N'$. It follows that $|N'|=m_1\leq
|\tau^{-i}Y[j+1]|=|\tau^{-i}X[j+1]|$. This is a contradiction.
\end{proof}

Reall that a GR-segment is a sequence of Gabriel-Roiter measures,
 which is closed under direct predecessors
and direct successors. We have proved in \cite{Ch8}
that a tame quiver has only finitely
many GR-segments and conjectured that a wild quiver has
infinitely many GR-segments.
It was already constructed in \cite{Ch5} for $n$-Kronecker
quivers infinitely many GR measures of regular modules,
which admit no direct predecessors. Since each GR measure
that does not admit a direct predecessor produce
a GR segment by taking direct successors,  we have already
infinitely many GR segments for $n$-Kronecker quivers.
Now Theorem \ref{thm3} actually gives a new series of (infinitely many) GR segments.

\begin{theo}There are infinitely many GR-segments of $n$-Kronecker quiver.
\end{theo}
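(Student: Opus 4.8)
The plan is to exploit the very simple combinatorial structure of the direct-successor relation on the set of GR measures. First I would record the structural observation that underlies everything. By the Successor Lemma of \cite{R4} every GR measure other than the maximal one $I^1$ has a direct successor, and such a direct successor is unique; dually, a direct predecessor is unique whenever it exists (if $I'<I''$ were both direct predecessors of $I$, then $I'<I''<I$ would contradict $I'$ having no measure strictly between it and $I$). Thus, viewing the GR measures as vertices and drawing an edge from each measure $I$ to its direct successor $I^+$, one obtains a graph in which every vertex has out-degree one (except $I^1$) and in-degree at most one; since $I<I^+$ always holds, the graph is acyclic. Consequently each connected component, i.e. each GR-segment, is a directed path which is either bi-infinite or a forward ray emanating from a unique \emph{source}, namely a measure admitting no direct predecessor.

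The key consequence I would extract is that two distinct GR measures that both fail to admit a direct predecessor must lie in distinct GR-segments: a forward ray contains exactly one source and a bi-infinite component contains none, so no single component can contain two sources. Hence the number of GR-segments is bounded below by the number of GR measures having no direct predecessor.

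To conclude, I would invoke \cite{Ch5}, where infinitely many GR measures of regular modules admitting no direct predecessor were constructed for the $n$-Kronecker quiver; by the previous paragraph these generate infinitely many pairwise distinct GR-segments, which already proves the statement. I would then point out that Theorem \ref{thm3} exhibits such segments explicitly and independently: for each fixed $i\geq 0$ the sequence $\mu(\tau^{-i}X[1])\ra\mu(\tau^{-i}X[2])\ra\mu(\tau^{-i}X[3])\ra\cdots$ is, by that theorem, a chain of consecutive direct successors, that is, a forward ray sitting inside some GR-segment, giving a new series of segments in addition to those of \cite{Ch5}.

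The step I expect to be the genuine obstacle is making this ``new series'' claim fully self-contained, i.e. producing infinitely many distinct GR-segments from Theorem \ref{thm3} alone rather than citing \cite{Ch5}. The measures $\mu(\tau^{-i}X)=\mu(\tau^{-i}X[1])$ are pairwise distinct and strictly decreasing in $i$ by Lemma \ref{sub}(3), so as measures the starting terms of the rays are certainly different; the difficulty is ruling out that a finite alternating sequence of direct-successor and direct-predecessor steps links the ray attached to $i$ to the ray attached to $i'$. The cleanest route would be to prove that each $\mu(\tau^{-i}X)$ is itself a source, which by the structural dichotomy above would immediately place the rays in distinct segments; absent such an argument, leaning on \cite{Ch5} for the infinitude of sources gives the quickest complete proof, while Theorem \ref{thm3} supplies the explicit ``new'' segments.
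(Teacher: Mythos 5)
Your main line of argument is sound: direct successors and direct predecessors are unique (by totality of the order on measures), so each GR-segment is a path containing at most one measure without a direct predecessor, and the infinitely many such measures constructed in \cite{Ch5} therefore lie in pairwise distinct segments. But this is precisely the argument the paper records in the paragraph \emph{preceding} the theorem and treats as already known; the proof the paper actually gives is the ``new series'' that you correctly identified as the real obstacle and then set aside. The paper's route is: fix $X$ with $\udim X=(1,c)$ and, for each $i\geq 0$, take the forward ray $\mu(\tau^{-i}X)<\mu(\tau^{-i}X[2])<\mu(\tau^{-i}X[3])<\cdots$ of consecutive direct successors supplied by Theorem \ref{thm3}. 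To separate the rays for $i\neq j$ one does \emph{not} need $\mu(\tau^{-i}X)$ to be a source. Since a segment is linearly ordered by iterated direct successors and $\mu(\tau^{-j}X)<\mu(\tau^{-i}X)$ for $j>i$ by Lemma \ref{sub}(3), the two rays could share a segment only if $\mu(\tau^{-i}X)$ occurred in the forward successor-orbit of $\mu(\tau^{-j}X)$, i.e.\ only if $\mu(\tau^{-i}X)=\mu(\tau^{-j}X[s])$ for some $s$. Theorem \ref{thm2} rules this out: every indecomposable with measure $\mu(\tau^{-j}X[s])$ is of the form $\tau^{-j}Y[s]$ with $\udim Y=(1,c)$, and no such module is isomorphic to $\tau^{-i}X$ when $j\neq i$ (the quasi-socles lie at different $\tau$-orbit positions and have different dimension vectors by Lemma \ref{dim1}). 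So the ingredient you were missing is Theorem \ref{thm2}, not a proof that $\mu(\tau^{-i}X)$ admits no direct predecessor. As for what each approach buys: yours is shorter but outsources the entire content to the external preprint \cite{Ch5}, whereas the paper's argument is self-contained in Theorems \ref{thm2} and \ref{thm3} and exhibits explicitly described segments that are genuinely new relative to \cite{Ch5}. (A small inaccuracy in your structural preamble: a segment need not be bi-infinite or a forward ray, since it may terminate at the maximal measure $I^1$; this does not affect the bound ``number of segments $\geq$ number of sources''.)
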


\begin{proof} Fix some $1\leq c\leq n-1$ and an indecomposable module $X$ with
$\udim X=(1,c)$. Let $i\geq 0$.  Starting with $\mu(\tau^{-i}X)$,
we obtains a sequence of GR measures
by taking direct successors
$$\mu(\tau^{-i}X)<\mu(\tau^{-i}X[2])<\mu(\tau^{-i}X[3])<\ldots$$
by Theorem \ref{thm3}.
We may also take direct predecessors of $\mu(\tau^{-i}X)$.
It is easily seen that $\mu(\tau^{-j}X[s])$ never appears
in this sequence for any $j>i$ and $s\geq 1$. It follows that
$\{\mu(\tau^{-i}X[r])\}_{r\geq 1}$ and $\{\mu(\tau^{-j}X[s])\}_{s\geq 1}$
are in different GR segments for all $0<i\neq j$.
Thus there are infinitely many GR-segments.
\end{proof}

%%%%%%%%%%%%%%%%%%%%%%%%%%%%%%%%%%%%%%%%%%%%%%%%%%%%%%%%%%%%%%%%%%%%%%%%%%%%%%%%%%%%%%%%%%%%%%%%%%%%%%%%%%%%%%%%%%
\section{$3$-Kronecker quiver}\label{3Kronecker}
It was proved in \cite{Zh} that
indecomposable modules in a regular component of the
Auslander-Reiten quiver of a wild hereditary algebra are uniquely
determined by their dimension vectors. Thus given a regular
component of the Auslander-Reiten quiver of a wild quiver,  there
are only finitely many indecomposable modules with the same length.
Since length is an invariant of  GR measure, it is interesting
to know if these indecomposable modules with the same length have
the same GR measure. However, this is not always the case (see, for example, Section \ref{anti-ex}).

Now we consider a fixed $3$-Kronecker quiver. This quiver is of special interests
because it relates to Fibonacci numbers.  Let $\mathcal{C}$ be a regular component which
contains an indecomposable module with dimension vector $(1,1)$ or $(1,2)$.
We show that the Gabriel-Roiter measures
of the indecomposable modules in $\mathcal{C}$ are uniquely determined by their dimension vectors.

\subsection{Fibonacci numbers and dimension vectors}
We denote by $F_i$ the Fibonacci numbers, which are defined
inductively: $F_0=0$, $F_1=1$ and $F_{n+2}=F_{n+1}+F_n$. Thus we
have the sequence:
$$0,1,1,2,3,5,8,13,21,34,55,89,144,233,377,\ldots$$
With the help of Fibonacci numbers, we may describe the dimension vectors
of indecomposable modules as follows:
\begin{lemm}\label{udimtau} Let $M$ be a non-projective
indecomposable module with dimension vector $(a,b)$.
\begin{itemize}
\item[(1)] If $\tau^i M $ exists for $i>0$, then its dimension vector is
$(F_{4i+2}a-F_{4i}b, F_{4i}a-F_{4i-2}b)$.
\item[(2)] If $\tau^{-i} M $ exists for $i>0$, then its dimension vector is
$(F_{4i}b-F_{4i-2}a, F_{4i+2}b-F_{4i}a)$.
\end{itemize}
\end{lemm}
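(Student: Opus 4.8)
The plan is to translate both statements into a single closed form for the powers of the Coxeter matrix and verify it by induction. Since $n=3$, the formulas $\udim\tau M=(\udim M)\Phi$ and $\udim\tau^{-1}N=(\udim N)\Phi^{-1}$ recalled in Section \ref{preliminaries} give $\udim\tau^iM=(\udim M)\Phi^i$ and $\udim\tau^{-i}M=(\udim M)\Phi^{-i}$ whenever the modules exist, with
$$\Phi=\begin{pmatrix}8&3\\-3&-1\end{pmatrix},\qquad \Phi^{-1}=\begin{pmatrix}-1&-3\\3&8\end{pmatrix}.$$
Since the dimension vectors are row vectors multiplied on the right, reading off the coefficients of $a$ and $b$ shows that statement (1) is equivalent to the matrix identity
$$\Phi^i=\begin{pmatrix}F_{4i+2}&F_{4i}\\-F_{4i}&-F_{4i-2}\end{pmatrix}\qquad(i\geq 1),$$
and statement (2) is equivalent to the analogous identity $\Phi^{-i}=\left(\begin{smallmatrix}-F_{4i-2}&-F_{4i}\\F_{4i}&F_{4i+2}\end{smallmatrix}\right)$.

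First I would record the single Fibonacci identity that drives everything, namely $F_{m+2}=3F_m-F_{m-2}$; this follows at once from $F_{m+2}=2F_m+F_{m-1}$ and $F_{m-1}=F_m-F_{m-2}$, and applying it twice yields the derived identity $F_{m+4}=8F_m-3F_{m-2}$. The base case $i=1$ of the displayed identity for $\Phi^i$ is immediate, since $F_6=8$, $F_4=3$, $F_2=1$. For the inductive step I would compute $\Phi^{i+1}=\Phi^i\Phi$: the four entries of the product are $8F_{4i+2}-3F_{4i}$, $3F_{4i+2}-F_{4i}$, $-8F_{4i}+3F_{4i-2}$, and $-3F_{4i}+F_{4i-2}$, and the two Fibonacci identities above (with $m=4i+2$ and $m=4i$) rewrite these as $F_{4(i+1)+2}$, $F_{4(i+1)}$, $-F_{4(i+1)}$, $-F_{4(i+1)-2}$, which is exactly the asserted form for $i+1$. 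This proves (1).

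For (2) I would avoid a second induction by exploiting the reflection symmetry of the Kronecker quiver that exchanges the two vertices. With $\sigma=\left(\begin{smallmatrix}0&1\\1&0\end{smallmatrix}\right)$ one checks directly that $\sigma\Phi\sigma=\Phi^{-1}$, whence $\Phi^{-i}=\sigma\Phi^i\sigma$ because $\sigma^2=I$. Conjugating the matrix from (1) by $\sigma$ swaps the two rows and the two columns, turning $\left(\begin{smallmatrix}F_{4i+2}&F_{4i}\\-F_{4i}&-F_{4i-2}\end{smallmatrix}\right)$ into $\left(\begin{smallmatrix}-F_{4i-2}&-F_{4i}\\F_{4i}&F_{4i+2}\end{smallmatrix}\right)$, which is precisely the required form; reading off coefficients gives $\udim\tau^{-i}M=(F_{4i}b-F_{4i-2}a,\,F_{4i+2}b-F_{4i}a)$. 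The only real work is the single identity $F_{m+2}=3F_m-F_{m-2}$ together with careful bookkeeping of the index shift from $4i$ to $4(i+1)$ in the inductive step; this is routine, so I anticipate no genuine obstacle beyond keeping the Fibonacci indices aligned.
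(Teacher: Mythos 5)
Your proof is correct and, for part (1), is essentially the paper's own argument: the same induction on $i$ driven by the identity $F_{m+2}=3F_m-F_{m-2}$ (equivalently its iterate $F_{m+4}=8F_m-3F_{m-2}$), merely repackaged as a closed form for $\Phi^i$. The only departure is your derivation of (2) from (1) via the conjugation $\sigma\Phi\sigma=\Phi^{-1}$ with $\sigma=\left(\begin{smallmatrix}0&1\\1&0\end{smallmatrix}\right)$, a clean shortcut where the paper simply runs the symmetric induction a second time.
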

\begin{proof} We show (1) and (2) follows similarly.
We use induction on $i$.  This is clear for $i=1$. Assume that
$\udim \tau^iM=(F_{4i+2}a-F_{4i}b, F_{4i}a-F_{4i-2}b)$. Then

\begin{displaymath}
\begin{array}{cl}
 & \udim \tau^{i+1}M\\
=& (F_{4i+2}a-F_{4i}b,F_{4i}a-F_{4i-2}b)\left(\begin{array}{cc}8 & 3\\
-3 & -1
\end{array}\right)\\
=& (8(F_{4i+2}a-F_{4i}b)-3(F_{4i}a-F_{4i-2}b),3(F_{4i+2}a-F_{4i}b)-(F_{4i}a-F_{4i-2}b))\\
=& ((8F_{4i+2}-3F_{4i})a-(8F_{4i}-3F_{4i-2})b,
(3F_{4i+2}-F_{4i})a-(3F_{4i}-F_{4i-2})b)\\
\end{array}
\end{displaymath}
Since for each $n\geq 2$, $F_{n+2}=3F_n-F_{n-2}$, i.e., $(F_{n+2},
F_{n})=(F_{n},F_{n-2})\left(\begin{array}{cc}3 & 1\\-1 &
0\end{array}\right)$, we have

\begin{displaymath}\begin{array}{rcl}
(F_{n+6}, F_{n+4}) &=& (F_{n+4},F_{n+2})\left(\begin{array}{cc}3 &
1\\-1 & 0\end{array}\right)\\
& = & (F_{n+2},F_{n})\left(\begin{array}{cc}3 & 1\\-1 &
0\end{array}\right)\left(\begin{array}{cc}3 & 1\\-1 &
0\end{array}\right)\\
& =& (F_{n+2},F_{n})\left(\begin{array}{cc}8 & 3\\
-3 & -1 \end{array}\right)\\
\end{array}\end{displaymath}
Therefore, $\udim \tau^{i+1}M=(F_{4(i+1)+2}a-F_{4(i+1)}b,
F_{4(i+1)}a-F_{4(i+1)-2}b)$.
\end{proof}

\subsection{Regular components containing an indecomposable module
with dimension vector $(1,1)$ or $(1,2)$} First of all, we are able to
 describe the regular components such that a
$\tau$-orbit contains two different indecomposable modules with the
same length. It turns out that up to a scalar such a component is
exactly the one that we have mentioned above.  The following
result was shown in \cite{Ch7} using Fibonacci numbers:

\begin{prop} Let $M$ be an indecomposable regular module
such that $|M|=|\tau^iM|$ for some $i\geq 1$. Then the $\tau$-orbit
contains an indecomposable module with dimension vector $(m,m)$ or
$(m, 2m)$ for some $m\geq 1$.
\end{prop}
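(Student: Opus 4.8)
The plan is to reduce the statement to a convexity property of the length along the $\tau$-orbit. Since $M$ is regular, its $\tau$-orbit is infinite in both directions, so for every $k\in\mathbb{Z}$ the module $\tau^kM$ exists and has a positive dimension vector $v_k:=\udim\tau^kM=(a,b)\Phi^k$, where $(a,b)=\udim M$ (row-vector convention, as in $\udim\tau M=(\udim M)\Phi$). Put $\ell_k:=|\tau^kM|$, the sum of the two entries of $v_k$; the hypothesis is $\ell_0=\ell_i$. For the $3$-Kronecker quiver one has $\det\Phi=1$ and $\mathrm{tr}\,\Phi=7$, so the Cayley--Hamilton relation $\Phi^2-7\Phi+I=0$ yields the scalar recursion $\ell_{k+1}+\ell_{k-1}=7\ell_k$ for all $k$. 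As $\ell_k>0$ this gives $\ell_{k+1}-\ell_k>\ell_k-\ell_{k-1}$, i.e.\ the sequence $(\ell_k)$ is strictly convex.

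First I would pin down the minimum of this sequence. The characteristic roots of $\Phi$ are $\lambda_\pm=(7\pm3\sqrt5)/2$ with $\lambda_+\lambda_-=1$ and $0<\lambda_-<1<\lambda_+$, so $\ell_k=c_+\lambda_+^{k}+c_-\lambda_+^{-k}$ for suitable constants $c_\pm$. Positivity of all $\ell_k$ forces $c_+,c_-\ge 0$, and the hypothesis $\ell_0=\ell_i$, which makes $(\ell_k)$ non-monotone, forces $c_+,c_->0$. Hence the real interpolation $\tilde\ell(x)=c_+\lambda_+^{x}+c_-\lambda_+^{-x}$ is strictly convex and symmetric about its unique minimizer $x_0$, and $\tilde\ell(0)=\tilde\ell(i)$ gives $x_0=i/2$.

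Finally I would read off the dimension vector at, or next to, the minimizer, which is where the argument pays off. If $i=2j$ is even, then $N:=\tau^jM$ sits exactly at the minimizer, so by symmetry $\ell_{j-1}=\ell_{j+1}$, i.e.\ $|\tau^{-1}N|=|\tau N|$. Writing $\udim N=(p,q)$ and using $\udim\tau N=(p,q)\Phi=(8p-3q,\,3p-q)$ and $\udim\tau^{-1}N=(p,q)\Phi^{-1}=(-p+3q,\,-3p+8q)$, this equality becomes $11p-4q=11q-4p$, so $p=q$ and $\udim N=(m,m)$. If $i$ is odd, the minimizer $i/2$ is a half-integer straddled by the neighbours $N:=\tau^{(i-1)/2}M$ and $\tau N=\tau^{(i+1)/2}M$, so $\ell_{(i-1)/2}=\ell_{(i+1)/2}$, i.e.\ $|N|=|\tau N|$; here $p+q=11p-4q$ forces $q=2p$, whence $\udim N=(m,2m)$. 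In either case the $\tau$-orbit contains a module of the required form. The one point needing care is the justification that $c_+,c_->0$, equivalently that $(a,b)$ is not an eigendirection of $\Phi$; but this is exactly what the hypothesis $\ell_0=\ell_i$ with $i\ge 1$ guarantees, since an eigendirection would make $(\ell_k)$ strictly monotone.
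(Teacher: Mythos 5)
Your argument is correct, and it is genuinely different from what the paper does: the paper gives no proof of this proposition at all, but quotes it from \cite{Ch7}, where it is established ``using Fibonacci numbers'' --- presumably by combining the explicit formula $\udim\tau^iM=(F_{4i+2}a-F_{4i}b,\,F_{4i}a-F_{4i-2}b)$ of Lemma \ref{udimtau} with Fibonacci identities to analyse the equation $|M|=|\tau^iM|$ directly. You instead exploit the spectral structure of the Coxeter transformation: Cayley--Hamilton ($\mathrm{tr}\,\Phi=7$, $\det\Phi=1$) gives the length recursion $\ell_{k+1}+\ell_{k-1}=7\ell_k$, positivity of the whole (two-sided, since $M$ is regular) orbit forces $c_\pm\geq 0$ in $\ell_k=c_+\lambda_+^k+c_-\lambda_+^{-k}$, the hypothesis $\ell_0=\ell_i$ rules out the monotone eigendirection cases and places the symmetry axis of the strictly convex interpolant at $x_0=i/2$, and then the parity of $i$ yields either $|\tau N|=|\tau^{-1}N|$, i.e.\ $11p-4q=11q-4p$, hence $p=q$, or $|N|=|\tau N|$, i.e.\ $p+q=11p-4q$, hence $q=2p$. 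I checked all of these computations and the sign analysis of $c_\pm$; they are correct, and $m\geq 1$ holds since a regular indecomposable over the Kronecker quiver has a sincere dimension vector. What your route buys is a short, self-contained proof independent of Fibonacci bookkeeping, which moreover generalizes verbatim to any $n$-Kronecker quiver (with $7$ replaced by $\mathrm{tr}\,\Phi=n^2-2$, the conclusion becoming $(m,m)$ or $(m,(n-1)m)$); what the Fibonacci route buys is the explicit dimension vectors along the orbit, which the remainder of Section \ref{3Kronecker} needs in any case.
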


The regular components containing  some indecomposable module $X$
with dimension vector $(1,1)$ or $(1,2)$ are of special interests.
On one hand, the dimension vectors of the indecomposable modules in
such a component strongly relate to pairs of Fibonacci numbers. On the other
hand, the indecomposable modules $\tau^{i}X$ (resp.
$\tau^{-i}X$) have no proper regular factors (resp. regular
submodules) for any $i\geq 0$.

In the following, we always denote by $X$ an indecomposable module
with dimension vector $(1,1)$ in a regular component $\mathcal{C}$.
We are going to describe some properties of the dimension vectors of the
indecomposable modules in $\mathcal{C}$.

\begin{remark} All
properties to be presented also hold similarly for a regular
component containing an indecomposable module with dimension vector
$(1,2)$.
\end{remark}

Since indecomposable modules in $\mathcal{C}$ are uniquely
determined by their dimension vectors, we use the dimension vectors to
denote the indecomposable modules.
The following is a part of the regular component $\mathcal{C}$:
$$\xymatrix@C=-12pt@R=8pt{
      &&{\tiny\left(\begin{array}{cc}275 & 110 \\\end{array}\right)}\ar@{->}[rd]
      &&{\tiny\left(\begin{array}{cc}55 & 55 \\\end{array}\right)}\ar@{->}[rd]
      &&{\tiny\left(\begin{array}{cc}110 & 275\\\end{array}\right)}\ar@{->}[rd]
      &&
      \\
       &{\tiny\left(\begin{array}{cc}273 & 105\\\end{array}\right)}\ar@{->}[rd]\ar@{->}[ru]
       &&{\tiny\left(\begin{array}{cc}42 & 21 \\\end{array}\right)}\ar@{->}[rd]\ar@{->}[ru]
       &&{\tiny\left(\begin{array}{cc} 21 & 42\\\end{array}\right)}\ar@{->}[rd]\ar@{->}[ru]
       &&{\tiny\left(\begin{array}{cc}105 & 273\\\end{array}\right)}\ar@{->}[rd]
       &\\
       {\tiny\left(\begin{array}{cc}272 & 104\\\end{array}\right)}\ar@{->}[rd]\ar@{->}[ru]
       &&{\tiny\left(\begin{array}{cc}40 & 16\\\end{array}\right)}\ar@{->}[rd]\ar@{->}[ru]
       &&{\tiny\left(\begin{array}{cc}8 & 8 \\\end{array}\right)}\ar@{->}[rd]\ar@{->}[ru]
       &&{\tiny\left(\begin{array}{cc}16 & 40\\\end{array}\right)}\ar@{->}[rd]\ar@{->}[ru]
       &&{\tiny\left(\begin{array}{cc}104 & 272\\\end{array}\right)}
       \\
       &{\tiny\left(\begin{array}{cc}39 & 15 \\\end{array}\right)}\ar@{->}[rd]\ar@{->}[ru]
       &&{\tiny\left(\begin{array}{cc}6 &  3\\\end{array}\right)}\ar@{->}[rd]\ar@{->}[ru]
       &&{\tiny\left(\begin{array}{cc} 3 & 6\\\end{array}\right)}\ar@{->}[rd]\ar@{->}[ru]
       &&{\tiny\left(\begin{array}{cc}15 & 39\\\end{array}\right)}\ar@{->}[rd]\ar@{->}[ru]
       &\\
       {\tiny\left(\begin{array}{cc}34 & 13\\\end{array}\right)}\ar@{->}[ru]
      &&{\tiny\left(\begin{array}{cc}5 & 2 \\\end{array}\right)}\ar@{->}[ru]
      &&{\tiny\left(\begin{array}{cc}1 & 1 \\\end{array}\right)}\ar@{->}[ru]
      &&{\tiny\left(\begin{array}{cc}2 & 5\\\end{array}\right)}\ar@{->}[ru]
      &&{\tiny\left(\begin{array}{cc}13 & 34\\\end{array}\right)}\\}$$

\begin{lemm}Let $M$ be an indecomposable module.
\begin{itemize}
\item[(1)] If $\udim M=(m,m)$, $m\geq 1$, then $\udim
\tau^iM=(mF_{4i+1},mF_{4i-1})$ and $\udim
\tau^{-i}M=(mF_{4i-1},mF_{4i+1})$, for each $i>0$.
\item[(2)] If $\udim M=(m,2m)$, $m\geq 1$, then $\udim
\tau^{i+1}M=(mF_{4i+3},mF_{4i+1})$ and $\udim
\tau^{-i}M=(mF_{4i+1},mF_{4i+3})$ for each $i\geq 0$.
\end{itemize}
\end{lemm}

\begin{proof} These are direct consequences of Lemma \ref{udimtau}.
\end{proof}

We define inductively a sequence of indecomposable modules in
$\mathcal{C}$. Let $X_1=X$. Assume that $X_n$ is already defined. If
$n$ is odd, then $X_{n+1}$ is the unique indecomposable module with
an irreducible epimorphism $X_{n+1}\ra X_n$; if $n$ is even, then
$X_{n+1}$ is the unique indecomposable module with an irreducible
monomorphism $X_n\ra X_{n+1}$. Thus
$$\xymatrix@C=5pt@R=8pt{
&X_2={\tiny\left(\begin{array}{cc}6 & 3 \\\end{array}\right)}\ar@{->}[ld]\ar@{->}[rd] &&
X_4={\tiny\left(\begin{array}{cc}42 & 21 \\\end{array}\right)}\ar@{->}[ld]\ar@{->}[rd]&\ldots\\
X_1={\tiny\left(\begin{array}{cc}1 & 1 \\\end{array}\right)} &&
X_3={\tiny\left(\begin{array}{cc}8 & 8 \\\end{array}\right)}&&
X_5={\tiny\left(\begin{array}{cc}55 & 55 \\\end{array}\right)}\\
}$$
Note that the quasi-length of $X_n$
is $\ql(M_n)=n$.

\begin{lemm} The dimension vector of $X_n$ is
\begin{displaymath}
\udim X_n=\left\{\begin{array}{ll}F_{2n}(1,1), & $n$\,\ \textrm{is odd}; \\
F_{2n}(2,1), & $n$\,\ \textrm{is
even.}\\\end{array}\right.\end{displaymath}
\end{lemm}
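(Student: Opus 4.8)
The plan is to prove the dimension vector formula for $X_n$ by induction on $n$, exploiting the inductive definition of the sequence $(X_n)$ together with the translation formulas already established. The key observation is that the sequence $X_1 \to X_2 \to X_3 \to \ldots$ alternates between taking a quasi-simple predecessor (when passing from an odd index to the next even one) and climbing the quasi-length (when passing from an even index to the next odd one), so each step should correspond to a single application of $\tau^{-1}$ or $\tau$ on the appropriate quasi-simple module, which the Fibonacci recursion turns into the claimed formulas.

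First I would set up the base cases. For $n=1$ we have $\udim X_1 = (1,1) = F_2 (1,1)$ since $F_2 = 1$, and for $n=2$ we read off $\udim X_2 = (6,3) = 3(2,1) = F_4(2,1)$ since $F_4 = 3$; these match the two branches of the asserted formula. For the inductive step I would treat the two parities separately. When $n$ is even and we pass to $X_{n+1}$ (odd), $X_{n+1}$ is obtained by an irreducible monomorphism $X_n \to X_{n+1}$, i.e. $X_{n+1} = X_n[\,\ql+1\,]$ climbing in the same quasi-tube; tracking dimension vectors along the relevant irreducible maps and using the Fibonacci identity $F_{2n+2} = F_{2n+1} + F_{2n}$ (and $F_{2n+2} = 3F_{2n} - F_{2n-2}$ from the previous lemma) should convert $F_{2n}(2,1)$ into $F_{2n+2}(1,1)$. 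When $n$ is odd and we pass to $X_{n+1}$ (even) via an irreducible epimorphism $X_{n+1} \to X_n$, the analogous computation should convert $F_{2n}(1,1)$ into $F_{2n+2}(2,1)$.

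**The cleanest route** is probably to identify $\qs(X_n)$ and $\ql(X_n)$ explicitly and then invoke the preceding lemma on the dimension vectors of $\tau^{\pm i}M$ for $M$ of type $(m,m)$ or $(m,2m)$, rather than recomputing dimension shifts along individual irreducible maps by hand. Since $\ql(X_n) = n$ is recorded, one can write $\udim X_n$ as a sum of the dimension vectors of $\tau^{-\ell}(\qs X_n)$ for $\ell = 0, \ldots, n-1$, and then the telescoping Fibonacci sums collapse to the single factor $F_{2n}$. The main obstacle will be bookkeeping: one must correctly locate which quasi-simple module sits at the bottom of $X_n$ and with which parity of $\tau$-shift, so that the indices entering the Fibonacci formulas of the previous lemma line up to produce exactly $F_{2n}$ rather than a neighbouring Fibonacci number; getting the off-by-one in the subscripts right (distinguishing $F_{4i+1}, F_{4i-1}$ from $F_{4i+3}, F_{4i+1}$) across the alternating parity is where the computation could most easily go astray.

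**Assuming** the Fibonacci recursions and the translation-dimension lemma stated just above, the remaining verification is a finite, routine check at each parity, so no further structural input is needed.
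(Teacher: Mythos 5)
Your ``cleanest route'' is exactly the paper's argument: it writes $\udim X_n$ as the sum of the dimension vectors of the quasi-simple constituents $\tau^iX$ (with $i$ running over $-\frac{n-1}{2},\ldots,\frac{n-1}{2}$ for $n$ odd and $-(\frac{n}{2}-1),\ldots,\frac{n}{2}$ for $n$ even), substitutes the Fibonacci formulas $\udim\tau^{\pm i}X$ from the preceding lemma, and collapses the sum via $\sum_{i=1}^{n}F_{2i-1}=F_{2n}$. The bookkeeping you flag works out precisely as you anticipate, so the proposal is correct and essentially identical to the paper's proof.
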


\begin{proof}It is not difficult to see that for each $n\geq 2$ the dimension vector of
$X_n$ is the following:

\begin{displaymath}
\udim X_n=\left\{\begin{array}{ll}\sum_{i=1}^{\frac{n-1}{2}}\udim
\tau^i X+\sum_{i=1}^{\frac{n-1}{2}}\udim \tau^{-i}X+(1,1),&
n\,\ \textrm{is odd};\\
\sum_{i=1}^{\frac{n}{2}}\udim \tau^i
X+\sum_{i=1}^{\frac{n}{2}-1}\udim \tau^{-i}X+(1,1),&
n\,\ \textrm{is even}.\\
\end{array}\right.
\end{displaymath}
Thus if $n$ is odd, then
$$\begin{array}{rcl}\udim X_n &=& (\sum_{i=1}^{\frac{n-1}{2}}F_{4i+1},\sum_{i=1}^{\frac{n-1}{2}}F_{4i-1})
+(\sum_{i=1}^{\frac{n-1}{2}}F_{4i-1},\sum_{i=1}^{\frac{n-1}{2}}F_{4i+1})+(1,1)\\
&=& (\sum_{i=1}^nF_{2i-1},\sum_{i=1}^n F_{2i-1})\\
&=& (F_{2n},F_{2n}).
\end{array}$$
It  follows similarly for $n$  even.
\end{proof}

\begin{coro}Let $M$ be
an indecomposable module in $\mathcal{C}$ with quasi-length $n$. If
$n$ is odd, then the dimension vector of $M$ is
$F_{2n}(F_{4i+1},F_{4i-1})$, $F_{2n}(1,1)$ or
$F_{2n}(F_{4i-1},F_{4i+1})$. If $n$ is even, then $\udim
M=F_{2n}(F_{4i+3},F_{4i+1})$ or $F_{2n}(F_{4i+1},F_{4i+3})$.
\end{coro}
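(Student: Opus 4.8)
The plan is to reduce the corollary to a single structural observation together with the dimension-vector computations already in hand. Since $\mathcal{C}$ is a regular component of the wild hereditary algebra $kQ$, it has shape $\mathbb{Z}A_\infty$; in particular quasi-length is constant along each $\tau$-orbit, because $\tau(Y[j])\cong(\tau Y)[j]$, and each such orbit consists of \emph{all} indecomposables of a fixed quasi-length (the quasi-simple modules form one $\tau$-orbit, and raising each to quasi-length $j$ yields exactly one orbit per $j$). Thus, using that $\ql(X_n)=n$, every indecomposable $M\in\mathcal{C}$ with $\ql(M)=n$ satisfies $M\cong\tau^iX_n$ for some $i\in\mathbb{Z}$. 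Once this is established the corollary becomes pure Fibonacci bookkeeping, so I would spend the bulk of the argument there and treat the two parities separately.

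For $n$ odd the preceding lemma gives $\udim X_n=F_{2n}(1,1)$, a vector of the shape $(m,m)$ with $m=F_{2n}$. Applying the lemma that computes $\udim\tau^{\pm i}M$ for $M$ of dimension $(m,m)$ yields $\udim\tau^iX_n=F_{2n}(F_{4i+1},F_{4i-1})$ and $\udim\tau^{-i}X_n=F_{2n}(F_{4i-1},F_{4i+1})$ for $i\geq 1$, while $i=0$ recovers $F_{2n}(1,1)$. Letting $i$ range over $\mathbb{Z}$ produces exactly the three families listed, finishing the odd case.

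For $n$ even the preceding lemma gives $\udim X_n=F_{2n}(2,1)$, which is of shape $(2m,m)$ rather than $(m,2m)$, so the corresponding lemma does not apply directly. The mild fix---and the only real care point in the computation---is to shift once: $\udim\tau^{-1}X_n=(2F_{2n},F_{2n})\Phi^{-1}=F_{2n}(1,2)$ is now of shape $(m,2m)$ with $m=F_{2n}$. The $(m,2m)$ case of the lemma then gives $\udim\tau^{i+1}(\tau^{-1}X_n)=F_{2n}(F_{4i+3},F_{4i+1})$ and $\udim\tau^{-i}(\tau^{-1}X_n)=F_{2n}(F_{4i+1},F_{4i+3})$ for $i\geq 0$; since $\tau^{i+1}(\tau^{-1}X_n)=\tau^iX_n$ and $\tau^{-i}(\tau^{-1}X_n)=\tau^{-(i+1)}X_n$, these are precisely the dimension vectors of $\tau^iX_n$ for $i\geq 0$ and of $\tau^{-i}X_n$ for $i\geq 1$, giving the two stated forms. (Equivalently one applies Lemma \ref{udimtau} directly to $(a,b)=(2F_{2n},F_{2n})$ and simplifies using $F_{m}-F_{m-2}=F_{m-1}$.) The main obstacle throughout is not any of these calculations but the first step---identifying the quasi-length-$n$ modules with the single $\tau$-orbit of $X_n$---which rests on the $\mathbb{Z}A_\infty$ shape of $\mathcal{C}$ and the $\tau$-invariance of quasi-length.
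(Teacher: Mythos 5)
Your proposal is correct and follows essentially the same route as the paper: the paper's proof is exactly the observation that $\ql(M)=n$ forces $M\cong\tau^iX_n$ for some $i\in\mathbb{Z}$, with the Fibonacci computation left implicit via the preceding two lemmas. You simply spell out the justification of the single-$\tau$-orbit claim and the $\tau^{-1}$-shift needed to apply the $(m,2m)$ case of the lemma when $n$ is even, both of which are correct.
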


\begin{proof} Since the quasi-length $\ql(M)=n$, $M$ and $X_n$ defined above are in the same $\tau$-orbit.
Thus $M\cong \tau^iX_n$ for some integer $i\in\mathbb{Z}$.
\end{proof}

\subsection{Indecomposable modules with the same length} Now we will
show that in the regular component $\mathcal{C}$, two indecomposable
modules with the same length are in the same $\tau$-orbit. Thus we
may describe their dimension vectors using the properties we have
seen before. For an indecomposable regular module $M$ with quasi-length $\ql=n$, we denote by $\qs(M)$
the unique quasi-simple module $X$ such that $M=X[n]$ and by $\qt(M)$ the unique quasi-simple module
$Y$ such that $M=[n]Y$.

\begin{lemm}Let $M$ and $N$ be two indecomposable modules in
$\mathcal{C}$ with $|M|=|N|$. Then $\ql(M)=\ql(N)$. Thus either
there is an indecomposable module $U$ with dimension vector
$F_{2n}(1,1)$ such that $M\cong \tau^i U$ and $N\cong \tau^{-i}U$
for some $i$, or there is an indecomposable module $V$ with
dimension vectors $F_{2n}(2,1)$ and $\udim \tau^{-1} V=F_{2n}(1,2)$
such that $M\cong\tau^iV$ and $N\cong \tau^{-i}(\tau^{-1}V)$ for
some $i$, where $n=\ql(M)=\ql(N)$.
\end{lemm}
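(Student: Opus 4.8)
The plan is to reduce the whole statement to the explicit dimension‑vector formulas already obtained for $\mathcal{C}$ and then to a purely numerical comparison of lengths. By the corollary describing the dimension vectors by quasi-length, every indecomposable module in $\mathcal{C}$ is isomorphic to $\tau^iX_n$ for a unique quasi-length $n=\ql$ and a unique $i\in\mathbb{Z}$, where $\udim X_n=F_{2n}(1,1)$ for $n$ odd and $\udim X_n=F_{2n}(2,1)$ for $n$ even. Hence each length $|\tau^iX_n|$ is $F_{2n}$ times a sum of two even-indexed Fibonacci numbers, and the lemma splits into two tasks: first, the length alone forces $\ql(M)=\ql(N)$; second, once the quasi-lengths agree, the asserted symmetric description of $M$ and $N$ drops out. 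The bulk of the work is the first task.

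First I would fix $n$ and analyse the map $i\mapsto|\tau^iX_n|$. Using Lemma \ref{udimtau} one gets, for $n$ odd, $|\tau^iX_n|=F_{2n}(F_{4|i|-1}+F_{4|i|+1})$, with minimal value $2F_{2n}$ at $i=0$; for $n$ even one obtains a completely analogous formula that is symmetric about the adjacent pair $X_n=\tau(\tau^{-1}X_n)$, with common minimal value $3F_{2n}$ attained exactly at $X_n$ and $\tau^{-1}X_n$. In both cases the underlying sum $F_{4t-1}+F_{4t+1}=L_{4t}$ is an even-indexed Lucas number, \emph{strictly increasing} in $t$, so $|\tau^iX_n|$ is symmetric about its centre and strictly increases as $i$ moves away from it. Consequently the length of a quasi-length-$n$ module equals $F_{2n}L_{2k}$ for a suitable $k\ge 0$ with $k\equiv n-1\pmod 2$, and within one quasi-length distinct lengths occur at distinct distances from the centre.

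The heart of the proof, and the step I expect to be the genuine obstacle, is to rule out coincidences between different quasi-lengths: I must show that $F_{2n}L_{2k}=F_{2n'}L_{2k'}$ forces $n=n'$. Here I would pass to the product identity $F_{2n}L_{2k}=F_{2(n+k)}+F_{2(n-k)}$, rewriting each length as a sum of two even-indexed Fibonacci numbers, and then exploit the rapid (essentially $\phi^{2}$-geometric) growth of the numbers $F_{2m}$ together with the parity constraint $k\equiv n-1\pmod 2$ to recover the unordered pair $\{n+k,\,|n-k|\}$, and hence $n$, from the length. This Fibonacci bookkeeping is delicate precisely because the minimal lengths $2,9,16,63,110,\dots$ grow only by a factor $\approx\phi^{2}$ between consecutive quasi-lengths, whereas the gaps within a single orbit grow by $\approx\phi^{4}$; the orbits therefore interleave, so a crude minimal-length spacing estimate does not suffice and the identity above is genuinely needed.

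Finally, with $\ql(M)=\ql(N)=n$ established, I would write $M\cong\tau^iX_n$ and $N\cong\tau^{-?}X_n$ and conclude the dichotomy. Assuming $M\not\cong N$, the two exponents differ, and since $|\tau^\bullet X_n|$ is strictly monotone away from its centre and symmetric, the exponent of $N$ must be the reflection of that of $M$ in that centre. For $n$ odd the centre is $X_n$ itself, giving $M\cong\tau^iX_n$ and $N\cong\tau^{-i}X_n$ with $U:=X_n$ of dimension $F_{2n}(1,1)$; for $n$ even the centre is the pair $\{X_n,\tau^{-1}X_n\}$, and the reflection sends $\tau^iX_n$ to $\tau^{-i}(\tau^{-1}X_n)$, so taking $V:=X_n$ with $\udim V=F_{2n}(2,1)$ and $\udim\tau^{-1}V=F_{2n}(1,2)$ yields $M\cong\tau^iV$ and $N\cong\tau^{-i}(\tau^{-1}V)$. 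This is exactly the asserted description.
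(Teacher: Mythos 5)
Your strategy is genuinely different from the paper's: the paper never passes to lengths at all, but instead writes $M$ and $N$ via their quasi-socles and quasi-tops as $\tau$-shifts $\tau^{m_i}X$, $\tau^{n_i}X$ of the fixed quasi-simple $X$, and uses the super-increasing inequality $\sum_{i=0}^{r}\udim\tau^{-i}X<\udim\tau^{-(r+1)}X$ of Lemma \ref{dim1}(3) in a case analysis on the signs and relative sizes of these indices to force the two index intervals to be reflections of each other. Your reduction to the arithmetic of the lengths $F_{2n}L_{2k}$ is a legitimate alternative, and your final reflection step (once $\ql(M)=\ql(N)$ is known) is fine. But the step you yourself identify as the crux --- that $F_{2n}L_{2k}=F_{2n'}L_{2k'}$ forces $n=n'$ --- is only gestured at, and the sketch you give is flawed in two specific ways.

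First, the identity $F_{2n}L_{2k}=F_{2(n+k)}+F_{2(n-k)}$ expresses the length as a \emph{sum} of two positive even-indexed Fibonacci numbers only when $k\le n$; since $k$ (essentially twice the distance from the centre of the $\tau$-orbit) is unbounded while $n$ is the quasi-length, the case $k>n$ occurs throughout the component, and there $F_{2(n-k)}=-F_{2(k-n)}$, so the length is a \emph{difference}. Second, even granting that the length determines the pair $\{n+k,\,|n-k|\}$, this recovers only the unordered set $\{n,k\}$, not $n$ itself; and the swap $(n,k)\mapsto(k,n)$ satisfies exactly the same parity constraint (both say $n+k$ is odd), so parity does not exclude it, contrary to what your sketch suggests. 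What actually breaks the tie is the sign: $F_{2n}L_{2k}=F_{2(n+k)}+F_{2|n-k|}$ whereas $F_{2k}L_{2n}=F_{2(n+k)}-F_{2|n-k|}$, and one must verify that the three possible shapes of a length --- $2F_{2s}$, a sum $F_{2s}+F_{2d}$ with $0<d<s$, and a difference $F_{2s}-F_{2d}$ with $0<d<s$ --- fall into pairwise disjoint intervals $(F_{2s+1},F_{2s+2})$, $(F_{2s},F_{2s+1})$ and $(F_{2s-1},F_{2s})$ respectively, from which $s$, the case, and then $d$ and $n$ can be read off. This can be done, so your route is salvageable, but none of it is in your write-up; as it stands the central assertion $\ql(M)=\ql(N)$ is not proved.
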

\begin{proof}
The proof depends on a detailed calculation of the dimension
vectors. Let $\qs(M)=M_1$, $\qt(M)=M_2$ and $M_i=\tau^{m_i}X$.
Similarly, let $\qs(N)=N_1$, $\qt(N)=N_2$ and $N_i=\tau^{n_i}X$.
Without loss of generality, we may assume that $m_2\geq n_2$. It is
obvious that $M\cong N$, provided the equality holds.

We first  assume that $m_2>n_2\geq 0$. Then
$$\udim N\leq \sum_{i=0}^{n_1}\udim \tau^iX<\udim \tau^{n_1+1}X$$ by Lemma \ref{dim1}. It
follows that $m_1\leq n_1$. But this implies $\udim M< \udim N$, a
contradiction.

Now we assume that $m_2\geq 0>n_2$. Obviously, we have $m_1\geq
|n_2|$. If $n_1\leq 0$, then $m_1=|n_2|$. Otherwise, $m_1>|n_2|$ and
$$\udim N\leq \sum_{i=0}^{|n_2|}\tau^{-i}X<\udim\tau^{-(|n_2|+1)}X\leq\udim\tau^{-m_1}X,$$
and thus $|N|<|\tau^{-m_1}X|=|\tau^{m_1}X|\leq |M|$, a
contradiction. Since $m_1=|n_2|$, we have $m_2=|n_1|$ and thus
$\ql(M)=\ql(N)$. If $n_1>0$, we have two possibilities $n_1<m_2$ and
$n_1\geq m_2$. In the first case, we have $|n_2|=m_1$. Otherwise,
$|n_2|<m_1$ and thus $\sum_{i=0}^{|n_2|}\udim
\tau^{-i}X<\udim\tau^{-m_1}X$ and
$\sum_{i=1}^{n_1}\udim\tau^iX<\udim\tau^{m_2}X$. It follows that
$|N|<|M|$, which is a contradiction. (Note that here we need $m_1\neq m_2$,
i.e., $M$ is not quasi-simple. If $M$ is quasi-simple, we can
discuss similarly.) In the second case, we have
$|\sum_{i=n_1+1}^{m_1}\tau^iX|=|\sum_{i=-n_2}^{m_2-1}\udim\tau^iX|$.
Then the discussion for the first case applies. The other
possibilities follow similarly.
The proof of the other statements is straightforward.
\end{proof}

Let us denote indecomposable modules in $\mathcal{C}$ by their dimension vectors. 
Given an odd number $n\geq 1$.
There is a short exact sequence
$$0\ra F_{2n}(1,1)\stackrel{f}{\ra}F_{2n+2}(1,2)\ra \tau^{-\frac{n+1}{2}}(1,1)\ra 0.$$
Thus we have short exact sequences
$$0\ra \tau^{-i}F_{2n}(1,1)\stackrel{f_i}{\ra}\tau^{-i}F_{2n+2}(1,2)\ra \tau^{-(i+\frac{n+1}{2})}(1,1)\ra 0$$
where $f_i$ are irreducible monomorphisms.

\begin{lemm}\label{compare3}Let $n\geq 1$ be odd. Then $\udim \tau^{-i}F_{2n}(1,1)<\udim
\tau^{-(i+\frac{n+1}{2})}(1,1)$ for each $i\geq 0$.  Therefore,
$\tau^{-i}F_{2n}(1,1)\stackrel{f_i}{\ra}\tau^{-i}F_{2n+2}(1,2)$ is a
GR inclusion.
\end{lemm}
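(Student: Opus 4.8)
The plan is to prove the dimension-vector inequality first and then feed it into Proposition~\ref{keyprop}, which is exactly the machine built for this situation. Recall the short exact sequence
$$0\ra \tau^{-i}F_{2n}(1,1)\stackrel{f_i}{\ra}\tau^{-i}F_{2n+2}(1,2)\ra \tau^{-(i+\frac{n+1}{2})}(1,1)\ra 0,$$
with $f_i$ an irreducible monomorphism between indecomposable regular modules. To invoke Proposition~\ref{keyprop} I must verify its three hypotheses: $f_i$ is an irreducible monomorphism (given by construction of the sequence), the cokernel $\tau^{-(i+\frac{n+1}{2})}(1,1)$ contains a preprojective module as a GR submodule, and the length inequality $|\tau^{-i}F_{2n}(1,1)|<|\tau^{-(i+\frac{n+1}{2})}(1,1)|$ holds. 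The second hypothesis is immediate: the cokernel has dimension vector a multiple of $(1,1)$, hence lies in $\mathcal{B}$ by Lemma~\ref{B}(1) applied to any module of dimension $(1,1)$ (every $(1,1)$-module has a projective-simple GR submodule, so its $\tau^{-}$-translates do too). The real content is the length inequality, which I will obtain from the sharper componentwise bound stated in the lemma.

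First I would establish the componentwise inequality $\udim \tau^{-i}F_{2n}(1,1)<\udim \tau^{-(i+\frac{n+1}{2})}(1,1)$. For the base case $i=0$ this compares $F_{2n}(1,1)$ against $\tau^{-(n+1)/2}(1,1)$. Writing $k=(n+1)/2$, the module $\tau^{-k}(1,1)$ has dimension vector $(F_{4k-1},F_{4k+1})=(F_{2n+1},F_{2n+3})$ by the lemma computing $\udim\tau^{-i}M$ for $M$ of type $(m,m)$. So the $i=0$ claim reduces to the two scalar inequalities $F_{2n}<F_{2n+1}$ and $F_{2n}<F_{2n+3}$, both obvious from monotonicity of the Fibonacci sequence. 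For general $i$ I would apply $\tau^{-i}$ to both sides; since within a regular component $\tau^{-i}$ is an additive operation on dimension vectors given by a fixed matrix $\Phi^{-i}$ with nonnegative entries (for $i\ge 1$ the relevant translate lands in the range where Lemma~\ref{dim1} guarantees growth), the strict componentwise inequality is preserved. Cleanly, I would instead argue directly: both $\tau^{-i}F_{2n}(1,1)$ and $\tau^{-(i+k)}(1,1)$ are $\tau^{-i}$-translates of $F_{2n}(1,1)$ and $\tau^{-k}(1,1)$ respectively, and by Lemma~\ref{dim1}(1) the map $M\mapsto\udim\tau^{-i}M$ is strictly monotone on dimension vectors of regular modules with $a\le b$, so the base-case strict inequality propagates.

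From the componentwise inequality the length inequality $|\tau^{-i}F_{2n}(1,1)|<|\tau^{-(i+k)}(1,1)|$ follows by summing coordinates, giving hypothesis (3) of Proposition~\ref{keyprop}. With all three hypotheses verified, Proposition~\ref{keyprop} yields that $f_i$ is a GR inclusion (and incidentally that $\tau^{-i}F_{2n}(1,1)$ is the unique GR submodule of $\tau^{-i}F_{2n+2}(1,2)$, though only the GR-inclusion conclusion is asserted here).

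The step I expect to be the main obstacle is the propagation of the strict componentwise inequality under $\tau^{-i}$. The subtlety is that although Lemma~\ref{dim1} tells us each module's dimension vector strictly grows under $\tau^{-}$, monotonicity of $\udim\tau^{-i}(\cdot)$ as a function of the input dimension vector is not literally one of the three stated parts; I would need to check that $\Phi^{-i}$, restricted to the relevant cone of imaginary/quasi-growing vectors, is entrywise order-preserving, or equivalently argue via the explicit Fibonacci formulas that both sides translate to strictly ordered pairs for every $i$. Since the explicit formulas for $\tau^{-i}$ applied to $(m,m)$- and $(m,2m)$-type modules are already available in the preceding lemmas, I expect this to reduce to comparing explicit Fibonacci expressions, making the obstacle a bookkeeping matter rather than a conceptual one.
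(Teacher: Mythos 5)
Your overall architecture matches the paper's: prove the componentwise inequality $\udim \tau^{-i}F_{2n}(1,1)<\udim\tau^{-(i+\frac{n+1}{2})}(1,1)$, sum coordinates to get hypothesis (3) of Proposition~\ref{keyprop}, and conclude. The base case $i=0$ is handled exactly as in the paper, and your check that the cokernel lies in $\mathcal{B}$ is correct (and more explicit than the paper bothers to be). The gap is the propagation step for $i\geq 1$. Lemma~\ref{dim1}(1) asserts only that $\udim M<\udim\tau^{-i}M$ for a single module $M$; it says nothing about monotonicity of $M\mapsto \udim\tau^{-i}M$ as a function of the input dimension vector, so your appeal to it for that purpose does not work. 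Worse, the matrix $\Phi^{-1}=\left(\begin{smallmatrix}-1&-n\\ n&n^2-1\end{smallmatrix}\right)$ has negative entries, so it is \emph{not} entrywise order-preserving on positive vectors: a positive difference vector $(d_1,d_2)$ is sent to $(-d_1+nd_2,\,-nd_1+(n^2-1)d_2)$, which stays positive only when $d_2/d_1$ lies above a threshold, and one would have to verify that the difference $\udim\tau^{-\frac{n+1}{2}}(1,1)-F_{2n}(1,1)$ sits in a cone invariant under $\Phi^{-1}$. That can be done, but it is a genuine verification, not bookkeeping that follows from the cited lemmas, and you have not done it.

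The paper avoids the issue entirely by writing both sides explicitly via the Fibonacci formulas already established: $\udim\tau^{-i}F_{2n}(1,1)=F_{2n}(F_{4i-1},F_{4i+1})$ and $\udim\tau^{-(i+\frac{n+1}{2})}(1,1)=(F_{4i+2n+1},F_{4i+2n+3})$, then applying the identity $F_rF_s+F_{r-1}F_{s-1}=F_{r+s-1}$ to get $F_{2n}F_{4i-1}<F_{2n+4i}<F_{2n+4i+1}$ and $F_{2n}F_{4i+1}<F_{2n+4i+3}$. This is precisely the fallback you name in your final paragraph, so the repair is within reach; but as written, the inequality for $i\geq 1$ --- which is the actual content of the lemma --- is asserted rather than proved.
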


\begin{proof}
If $i=0$, $$\udim
\tau^{-\frac{n+1}{2}}X=(F_{2n+1},F_{2n+3})>(F_{2n},F_{2n}).$$ Now
assume that $i\geq 1$. Then we need to show
$$(F_{2n}F_{4i-1}, F_{2n}F_{4i+1})<(F_{4(i+\frac{n+1}{2})-1},F_{4(\frac{n+1}{2})+1}).$$
Since  $F_rF_s+F_{r-1}F_{s-1}=F_{r+s-1}$, we get
$F_{2n}F_{4i-1}<F_{2n+4i}<F_{2n+4i+1}$ and
$F_{2n}F_{4i+1}<F_{2n+4i+3}$.
The second statement follows by Proposition \ref{keyprop}.
\end{proof}

Similarly, let $n\geq 2$ be an even number. Then there is a short exact
sequence
$$0\ra F_{2n}(1,2)\stackrel{f}{\ra}\tau^{-1}F_{2n+2}(1,1)\ra \tau^{-(\frac{n}{2}+1)}(1,1)\ra 0.$$
Thus we have short exact sequences
$$0\ra \tau^{-i}F_{2n}(1,2)\stackrel{f_i}{\ra}\tau^{-(i+1)}F_{2n+2}(1,1)\ra \tau^{-(i+\frac{n}{2}+1)}(1,1)\ra 0$$
where $f_i$ are irreducible monomorphisms. As above,  the following
result can be easily shown:
\begin{lemm}\label{compare4}Let $n\geq 2$ be even. Then $\udim \tau^{-i}F_{2n}(1,2)<\udim
\tau^{-(i+\frac{n}{2}+1)}(1,1)$ for each $i\geq 0$.  Therefore,
$\tau^{-i}F_{2n}(1,2)\stackrel{f_i}{\ra}\tau^{-(i+1)}F_{2n+2}(1,1)$
is a GR inclusion.
\end{lemm}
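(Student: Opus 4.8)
The plan is to mirror the proof of Lemma~\ref{compare3} exactly, since Lemma~\ref{compare4} is its even-index analogue and the same machinery applies verbatim. The goal is the dimension-vector inequality $\udim \tau^{-i}F_{2n}(1,2)<\udim \tau^{-(i+\frac{n}{2}+1)}(1,1)$, after which the second assertion---that $f_i$ is a GR inclusion---follows immediately by feeding the short exact sequence $0\ra \tau^{-i}F_{2n}(1,2)\stackrel{f_i}{\ra}\tau^{-(i+1)}F_{2n+2}(1,1)\ra \tau^{-(i+\frac{n}{2}+1)}(1,1)\ra 0$ into Proposition~\ref{keyprop}. For that application one checks the three hypotheses of Proposition~\ref{keyprop}: $f_i$ is an irreducible monomorphism (given), the cokernel $\tau^{-(i+\frac{n}{2}+1)}(1,1)$ has a preprojective GR submodule (true since any module of dimension vector $(1,1)$ lies in $\mathcal{B}$, hence so do its $\tau^{-}$-shifts by Lemma~\ref{B}(1)), and the length condition $|X|<|Z|$, which is precisely what the dimension inequality delivers once we note it forces $|\tau^{-i}F_{2n}(1,2)|<|\tau^{-(i+\frac{n}{2}+1)}(1,1)|$.

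So the real content is the dimension inequality. First I would split into the base case $i=0$ and the inductive range $i\geq 1$. For $i=0$ one computes $\udim \tau^{-(\frac{n}{2}+1)}(1,1)$ using the formula from Lemma~\ref{udimtau}(2), getting a pair of Fibonacci numbers indexed around $2n+2$, and compares it componentwise with $\udim F_{2n}(1,2)=(F_{2n},2F_{2n})$; since $n$ is even, $\frac{n}{2}+1$ is an integer and the shift is well-defined. For $i\geq 1$ I would apply Lemma~\ref{udimtau}(2) to both sides to rewrite everything as products and single Fibonacci numbers, reducing the claim to two scalar inequalities, one per coordinate. The key tool, exactly as in Lemma~\ref{compare3}, is the Fibonacci identity $F_rF_s+F_{r-1}F_{s-1}=F_{r+s-1}$, which converts a product $F_{2n}F_{4i\pm\cdot}$ into a single Fibonacci number $F_{2n+4i\pm\cdot}$ strictly dominated by the right-hand index; monotonicity of $(F_m)$ then closes both inequalities.

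The main obstacle, such as it is, is purely bookkeeping: getting the four Fibonacci indices on the right-hand side correct. Writing $\udim \tau^{-i}F_{2n}(1,2)=(F_{2n}F_{4i+1},F_{2n}F_{4i+3})$ from the even-dimension-vector lemma, and $\udim \tau^{-(i+\frac{n}{2}+1)}(1,1)=(F_{4(i+\frac{n}{2}+1)-1},F_{4(i+\frac{n}{2}+1)+1})$, I must verify $F_{2n}F_{4i+1}<F_{2n+4i+2}$ and $F_{2n}F_{4i+3}<F_{2n+4i+4}$ with the right target indices $4i+2n+1$ and $4i+2n+3$ respectively; the identity gives $F_{2n}F_{4i+1}=F_{2n+4i}-F_{2n-1}F_{4i}<F_{2n+4i}$, comfortably below the target, and similarly for the second coordinate. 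Since $n$ is even there are no parity mismatches to worry about, so once the indices are pinned down the estimates are immediate. I therefore expect the proof to consist of one display of the two-coordinate target inequality followed by the two one-line Fibonacci estimates and the citation of Proposition~\ref{keyprop}, exactly parallel to Lemma~\ref{compare3}.
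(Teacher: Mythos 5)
Your proposal is correct and follows exactly the route the paper intends: the paper omits the proof of Lemma~\ref{compare4}, stating only that it "can be easily shown" as in Lemma~\ref{compare3}, and your argument (compute both dimension vectors via the Fibonacci formulas, compare coordinatewise using $F_rF_s+F_{r-1}F_{s-1}=F_{r+s-1}$, then invoke Proposition~\ref{keyprop}) is precisely that parallel argument. One minor bookkeeping slip: the right-hand indices are $4(i+\frac{n}{2}+1)\mp 1=4i+2n+3$ and $4i+2n+5$, not $4i+2n+1$ and $4i+2n+3$, but this only makes the targets larger, so your estimates $F_{2n}F_{4i+1}<F_{2n+4i}$ and $F_{2n}F_{4i+3}<F_{2n+4i+2}$ still suffice.
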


\begin{theo} Let $X$ be an indecomposable module with dimension
vector $(1,1)$ and $\mathcal{C}$ a regular component containing $X$.
Then the GR measures of the indecomposable modules in $\mathcal{C}$
are uniquely determined by their dimension vectors.
\end{theo}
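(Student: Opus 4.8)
The plan is to show that within the regular component $\mathcal{C}$ containing an indecomposable $X$ with $\udim X=(1,1)$, any two indecomposable modules with the same dimension vector have the same GR measure; since indecomposable modules in a regular component of a wild hereditary algebra are uniquely determined by their dimension vectors (by \cite{Zh}), it suffices to pin down $\mu(M)$ in terms of the position of $M$ in the AR-quiver of $\mathcal{C}$. The key structural fact already established is that $X\in\mathcal{B}$, so $\tau^{-i}X\in\mathcal{B}$ for all $i\ge 0$ (Lemma \ref{B}) and consequently $\tau^{-i}X[j]$ is, up to isomorphism, the unique GR submodule of $\tau^{-i}X[j+1]$ (Theorem \ref{thm1}). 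This reduces the computation of every $\mu(\tau^{-i}X[j])$ to an inductive build-up from the quasi-simple modules $\tau^{-i}X$ along irreducible monomorphisms.

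First I would organize the quasi-simple modules of $\mathcal{C}$ into the two families whose dimension vectors are the Fibonacci-scaled vectors $F_{2n}(1,1)$ and $F_{2n}(1,2)$ (together with their $\tau^{-i}$-translates). For each such quasi-simple module $W$, Theorem \ref{thm1} gives a GR-chain $W=W[1]\subset W[2]\subset\cdots$ realizing $\mu(W[j])$ as $\mu(W[j-1])\cup\{|W[j]|\}$, so $\mu(W[j])$ is completely determined once $\mu(W)$ and the lengths $|W[j]|$ are known. The lengths are explicit Fibonacci combinations from the dimension-vector lemmas of this section, so the heart of the matter is identifying $\mu(W)$ for the quasi-simple $W$. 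Here Lemmas \ref{compare3} and \ref{compare4} are decisive: they exhibit, for the ``linking'' modules $\tau^{-i}F_{2n}(1,1)$ and $\tau^{-i}F_{2n}(1,2)$, irreducible monomorphisms that are GR inclusions connecting the two Fibonacci families across different quasi-lengths. I would use these to propagate GR-submodule information from the base quasi-simple modules $X$ (with $\udim X=(1,1)$, whose GR submodule is $P_1$ by Lemma \ref{c}) outward through the whole component.

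The decisive reduction is the earlier uniqueness result (Theorem \ref{thm2}): if $\udim W=(1,c)$ then $\mu(M)=\mu(\tau^{-i}W[j])$ forces $M\cong\tau^{-i}Y[j]$ for some $Y$ with $\udim Y=(1,c)$. Combined with the fact that distinct modules in $\mathcal{C}$ of the same dimension vector must lie in the same $\tau$-orbit and at the same quasi-length (the same-length lemma proved just above), this means that two modules $M,N\in\mathcal{C}$ with $\udim M=\udim N$ share $\qs$-data up to the symmetry $\tau^i\leftrightarrow\tau^{-i}$ about the central $(1,1)$ or $(1,2)$ module. I would then verify that the GR measure is invariant under this reflection by matching the GR-submodule chains on the two sides: the GR inclusions of Lemmas \ref{compare3} and \ref{compare4} produce identical sequences of lengths and identical base GR submodules, hence identical $\mu$.

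The main obstacle I anticipate is the last step — confirming the reflection symmetry of $\mu$ about the $\tau$-orbit of the self-dual module $(1,1)$ (or $(1,2)$). While lengths are manifestly symmetric under $\tau^i\leftrightarrow\tau^{-i}$ since $|\tau^iW|=|\tau^{-i}W|$ for a quasi-simple $W$ in this component, the GR submodule itself is \emph{not} obviously symmetric: on the ``$\tau^{-i}$'' side the GR submodules are described by Lemma \ref{sub} and Lemma \ref{c}(3), whereas on the ``$\tau^i$'' side one must argue that the \emph{factor} structure (via $\qt$ rather than $\qs$) yields the same GR-measure data. Closing this gap requires checking that the two filtrations built from opposite ends of the quasi-length have the same set of submodule-lengths at each stage, which is exactly where the Fibonacci identity $F_rF_s+F_{r-1}F_{s-1}=F_{r+s-1}$ used in Lemma \ref{compare3} controls the comparisons. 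Once that symmetry is secured, the theorem follows: $\mu(M)$ depends only on the quasi-length and the dimension vector, so it is determined by $\udim M$ alone.
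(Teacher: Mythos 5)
Your proposal misidentifies what actually has to be proved, and the step you flag as the ``main obstacle'' is resolved in the wrong direction. Inside a single regular component, \cite{Zh} already says that the dimension vector determines the module up to isomorphism, so the implication ``same dimension vector $\Rightarrow$ same GR measure'' is vacuous; the only non-trivial content of the theorem (and the point of contrast with the counterexample in Section \ref{anti-ex}) is that \emph{distinct} dimension vectors in $\mathcal{C}$ give \emph{distinct} GR measures. Since the GR measure records the length, the only candidates for a coincidence are the same-length mirror pairs $\tau^iU$ and $\tau^{-i}U$ produced by the same-length lemma, whose dimension vectors are transposes of one another. You propose to close the argument by proving that $\mu$ is \emph{invariant} under the reflection $\tau^i\leftrightarrow\tau^{-i}$, i.e.\ that these pairs have \emph{equal} GR measures. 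That is the negation of what the theorem asserts, and it is false: for instance $\mu((3,6))=\{1,2,9\}$ via the GR filtration $P_1\subset(1,1)\subset(3,6)=X[2]$, whereas Theorem \ref{thm2} shows $\mu((6,3))\neq\mu(X[2])$, since otherwise $(6,3)\cong Y[2]$ for some $Y$ with $\udim Y=(1,1)$, forcing $\udim(6,3)=(3,6)$.

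The concrete gap is therefore on the $\tau^{+i}$ side of the component. Lemmas \ref{compare3} and \ref{compare4} (and Theorems \ref{thm1}, \ref{thm2}) identify the GR submodules only for the modules $\tau^{-i}F_{2n}(1,1)[j]$, $\tau^{-i}F_{2n}(1,2)[j]$ with $i\geq 0$; there is no analogous statement for $\tau^{i}F_{2n+2}(2,1)$ with $i\geq 0$, and your claim that the two sides ``produce identical sequences of lengths and identical base GR submodules'' has no support. What is actually needed --- and what the paper supplies --- is a proof that a GR submodule $Y$ of $\tau^iF_{2n+2}(2,1)$ cannot have length $|\tau^{-i}F_{2n}(1,1)|$: writing $\udim Y=(a,b)$, the epimorphism from $Y[2]$ onto $\tau^iF_{2n+2}(2,1)$ gives $3b\geq F_{2n+2}F_{4i+3}$, and combined with $a+b=F_{2n}(F_{4i-1}+F_{4i+1})$ one derives $\frac{a}{b}<2-\varphi$ via the estimates $F_{2n+2}>(1+\varphi)F_{2n}$ and $F_{4i-1}+F_{4i+1}<F_{4i+3}$, so that $(a,b)$ is not a root. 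This golden-ratio argument is the core of the proof and nothing in your outline substitutes for it; without it the theorem is not established.
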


\begin{proof}
By previous discussion,  it is sufficient to consider the following cases:
\begin{itemize}
  \item[(1)] Since for an odd number $n\geq 1$ and each $i\geq 0$,
           $\tau^{-i}F_{2n}(1,1)\stackrel{f_i}{\ra}\tau^{-i}F_{2n+2}(1,2)$ is a
               GR inclusion (Lemma \ref{compare3}), we need to show that
               the length of a GR submodule of $\tau^iF_{2n+2}(2,1)$
               does not equal to $|\tau^{-i}F_{2n}(1,1)|$.
  \item[(2)] Since for an even number $n\geq 2$ and each $i\geq 0$,
           $\tau^{-i}F_{2n}(1,2)\stackrel{f_i}{\ra}\tau^{-(i+1)}F_{2n+2}(1,1)$
         is a GR inclusion (Lemma \ref{compare4}), we need to show that the length of a GR
         submodule of $\tau^{i+1}F_{2n+2}(1,1)$ does not equal to
         $|\tau^{-i}F_{2n}(1,2)|$.
\end{itemize}
We show (1) and (2) follows similarly. A GR submodule $Y$ of
$\tau^iF_{2n+2}(2,1)$ is obviously a regular module. Assume that
$\udim Y=(a,b)$. Then $\udim \tau^{-1}Y=(3b-a,8b-3a)$. Let $M$ be
the unique indecomposable module with an irreducible monomorphism
$Y\ra M$. Then there is an epimorphism $M\ra\tau^iF_{2n+2}(2,1)$
(Lemma \ref{GR}(3)). Note that $\udim M\leq(3b,9b-3a)$. Assume
that $|Y|=|\tau^{-i}F_{2n}(1,1)|$. Then we have
$$a+b=F_{2n}(F_{4i-1}+F_{4i+1}) \quad \textrm{and}\quad 3b\geq F_{2n+2}F_{4i+3}.$$
The second inequality follows because
$\udim\tau^iF_{2n+2}(2,1)=F_{2n+2}(F_{4i+3},F_{4i+1})$. Therefore,
$$\frac{a+b}{3b}\leq \frac{F_{2n}(F_{4i-1}+F_{4i+1})}{F_{2n+2}F_{4i+3}}$$
and thus
$$\frac{a}{b}\leq \frac{3F_{2n}}{F_{2n+2}}\frac{(F_{4i-1}+F_{4i+1})}{F_{4i+3}}-1.$$
For the purpose of a contradiction, we show that the right hand side
is smaller than $\frac{3-\sqrt{5}}{2}=2-\varphi$ where
$\varphi=\frac{1+\sqrt{5}}{2}$ is the golden ratio. If this is the
case, then $(a,b)$ is not a root. Thus there does not exist an
indecomposable module with dimension vector $(a,b)$ and we obtain a
contradiction.

We simply write $A=\frac{(F_{4i-1}+F_{4i+1})}{F_{4i+3}}$. It is
sufficient to show
$$\frac{F_{2n}}{F_{2n+2}}A<\frac{3-\varphi}{3}.$$ Using
$F_m=\frac{\varphi^m-(1-\varphi)^m}{\sqrt{5}}$, we may easily obtain
that $\varphi F_m=F_{m+1}-(1-\varphi)^m$. Thus
\begin{displaymath}
\begin{array}{rcl}
(1+\varphi)F_m=\varphi^2F_m &=& \varphi F_{m+1}-(1-\varphi)^m\varphi\\
                            &=& F_{m+2}-(1-\varphi)^{m+1}-(1-\varphi)^m\varphi\\
                            &=& F_{m+2}-(1-\varphi)^m(1-\varphi+\varphi)\\
                            &=& F_{m+2}-(1-\varphi)^m.\\
\end{array}
\end{displaymath}
Replacing $m$ by $2n$, we get
$\frac{F_{2n}}{F_{2n+2}}<\frac{F_{2n}}{(1+\varphi)F_{2n}}=\frac{1}{1+\varphi}$.
Thus it is sufficient to show that
$$\frac{1}{(1+\varphi)}A<\frac{3-\varphi}{3}.$$
Note that $\frac{(3-\varphi)(1+\varphi)}{3}=\frac{2+\varphi}{3}>1$.
However,
$$A=\frac{(F_{4i-1}+F_{4i+1})}{F_{4i+3}}<\frac{(F_{4i+2}+F_{4i+1})}{F_{4i+3}}=1.$$
The proof is finished.
\end{proof}

\begin{remark} The theorem can be generalized for regular components over $n$-Kronecker quivers,
which contains an indecomposable module with dimension vector $(1,1)$ or $(1,n-1)$.
\end{remark}

\subsection{A counter example}\label{anti-ex}

In the following example, we will see that non-isomorphic indecomposable modules
in a regular component may have the same GR measure for some wild quiver.
\begin{example} Let $k$ be an algebraically closed field and
$Q=(Q_0,Q_1)$ be a tame quiver of type
$\widetilde{A}_n$ with $n\geq 3$ an odd number and with sink-source
orientation, i.e., a vertex in $Q_0$ is either a sink or a source.
Without loss of generality, we may certainly assume that the
vertices in $Q_0$ are labeled by $\{a_1,a_2,\ldots,a_{n+1}\}$ and
there is an arrow $a_1\ra a_2$. This means that $a_1$ is a source.
Let $\overline{Q}$ be the one point extension of $Q$ with respect to
the indecomposable projective module $P_{a_1}$. More precisely,
$\overline{Q}_0=Q_0\cup \{a_0\}$ and $\overline{Q}_1=Q_1\cup\{a_0\ra
a_1\}$.  For example, if $n=3$, then $\overline{Q}$ is the
following:
$$\xymatrix@C=15pt@R=6pt{&&a_2&\\a_0\ar[r] & a_1\ar[ru]\ar[rd] &&
a_4\ar[lu]\ar[ld]\\ && a_3&\\ }$$ We know from the structure of the
Auslander-Reiten quiver of $Q$ that there are two exceptional
regular tubes, each of which contains precisely $\frac{n+1}{2}$
non-isomorphic indecomposable modules of length $2$ as quasi-simple
modules. Let $M$ be  one of those  with dimension vector
$$(\udim
M)_{a_i}=\left\{\begin{array}{ll}1, & i=1, 2;\\0, &
\textrm{otherwise}.\\\end{array}\right.$$  Then as $kQ$-modules,
$M,\tau_Q M,\ldots,\tau_Q^{\frac{n-1}{2}}M$ are pairwise
non-isomorphic quasi-simple regular modules in a regular tube. It is
not difficult to see that $\tau_{\overline{Q}}^iM=\tau_Q^iM$ for
every $0\leq i\leq\frac{n-1}{2}$ and
$\tau_{\overline{Q}}^{\frac{n+1}{2}}M$ is an indecomposable module
with length $3$. Thus $\tau_{\overline{Q}}^{i}M$ are quasi-simple
regular modules with length $2$ for all $0\leq i\leq \frac{n-1}{2}$.
Obviously, they all have the same GR measure $\{1,2\}$.
\end{example}


\begin{thebibliography}{99}


\bibitem{ARS} M.Auslander; I.Reiten; S.O.Smal\o,
{\sl Representation Theory of Artin Algebras}. Cambridge studies in
advanced mathematics {\bf 36} (Cambridge University Press,
Cambridge, 1995).

\bibitem{Ch1}B.Chen,
The Gabriel-Roiter measure for representation-finite hereditary
algebras.  {\sl J. Algebra} {\bf 309}(2007), 292-317.

\bibitem{Ch2} B.Chen,
The Gabriel-Roiter measure for $\widetilde{\mathbb{A}}_{n}$. {\sl J.
Algebra} {\bf 320}(2008), 2891-2906.

\bibitem{Ch3} B.Chen,
Comparison of Auslander-Reiten thoery and Gabriel-Roiter measure
approach to the categories of tame hereditary algebras. {\sl Comm.
Algebra} {\bf 36}(2008),4186-4200.

\bibitem{Ch4} B.Chen,
 The Gabriel-Roiter measure for  $\widetilde{\mathbb{A}}_n$ II.
{\sl Submitted}.

\bibitem{Ch5} B.Chen,
The Gabriel-Roiter measures admitting no direct predecessors over
$n$-Kronecker quivers. {\sl Preprint}.


\bibitem{Ch6} B.Chen,
The number of the Gabriel-Roiter measures admitting no direct
predecessors over a wild quiver.{\sl Preprint}

\bibitem{Ch7} B.Chen,
Regular modules and quasi-lengths over $3$-Kronecker quiver: using
Fibonacci numbers. {\sl Preprint}

\bibitem{Ch8} B.Chen,
The GR-segments for tame quivers. {\sl Preprint}.

\bibitem{F}P.Fahr,
Gabriel-Roiter measures for the $3$-Kronecker quiver.
{\sl Dissertation.} Bielefeld (2007).


\bibitem{Ka} V.G.Kac,
Root system, representations of quiver and invariant theory. {\sl
Invariant theory, Montecatini (1982)}, Lecture Notes in Mathematics,
{\bf 996}. Springer-Verlag, 1983, 74-108.

\bibitem{K} O.Kerner,
Representations of wild quivers. {\sl Representation theory of
algebras and related topics (Mexico city,1994)}, 65-107, CMS Conf.
Proc.,{\bf 19}, {\sl Amer. Math. Soc. Providence,RI,1996.}


\bibitem{R1} C.M.Ringel,
Finite dimensional hereditary algebras of wild representation type.
{\sl Math.Z} {\bf 161}(1978), 235-255.

\bibitem{R2} C.M.Ringel,
{\sl Tame algebras and integral quadratic forms.} Lecture Notes in
Mathematics, {\bf 1099}. Springer-Verlag, Berlin, 1984.

\bibitem{R3} C.M.Ringel,
The Gabriel-Roiter measure. {\sl Bull. Sci. math.} {\bf 129}(2005),
726-748.

\bibitem{R4} C.M.Ringel,
Foundation of the representation theory of artin algebras, Using the
Gabriel-Roiter measure. Proceedings of the 36th Symposium on Ring
Theory and Representation Theory.  {\bf 2}, 1-19, Symp. Ring Theory
Represent Theory Organ. Comm., Yamanashi, 2004.

\bibitem{R5} C.M.Ringel,
Gabriel-Roiter inclusions and Auslander-Reiten theory. {\sl
Preprint}.

\bibitem{Zh} Y.Zhang,
The modules in any component of the AR-quiver of a wild hereditary
Artin algebra are uniquely determined by their composition factors.
{\sl Arch. Math.} (Basel)  {\bf 53} (1989), 250--251.

\end{thebibliography}
\end{document}